\theoremstyle{plain}
\newtheorem{thm}{Theorem}
\newtheorem{cor}[thm]{Corollary}
\newtheorem{prop}[thm]{Proposition}
\numberwithin{thm}{section}
\newtheorem*{thm*}{Theorem}
\newtheorem*{conj*}{Conjecture}
\newtheorem*{verm*}{Vermutung}
\theoremstyle{definition}
\newtheorem{defn}[thm]{Definition}
\newtheorem{rem}[thm]{Remark}
\newtheorem{notation}[thm]{Notation}
\numberwithin{equation}{section}
\newcommand{\base}{Z}
\newcommand{\basept}{z}
\newcommand{\calB}{\mathcal{B}}
\newcommand{\calC}{\mathcal{C}}
\newcommand{\calL}{\mathcal{L}}
\newcommand{\calO}{\mathcal{O}}
\newcommand{\calX}{\mathcal{X}}
\newcommand{\CC}[1][]{\mathbb{C}^{#1}}
\newcommand{\comp}{C_j}
\newcommand{\compi}{C_{j'}}
\newcommand{\compii}{C_{j''}}
\newcommand{\compnu}{C_j^\nu}
\newcommand{\compinu}{C_{j'}^\nu}
\newcommand{\cspin}{\left(\cspini,\cspinii,\cspiniii\right)}
\newcommand{\cspini}{f:\calX\rightarrow\base}
\newcommand{\cspinii}{\calL}
\newcommand{\cspiniii}{\calB}
\newcommand{\DC}{D}
\newcommand{\defspace}{B}
\newcommand{\ET}{T}
\newcommand{\EX}{\overline{N}}
\newcommand{\fibre}[1][\stab]{\overline{S}_{#1}}
\newcommand{\iso}{(\isoi,\isoii)}
\newcommand{\isoi}{\sigma}
\newcommand{\isoii}{\gamma}
\newcommand{\mgbar}[1][g]{\overline{M}_{#1}}
\newcommand{\nonex}{\widetilde{\spini}}
\newcommand{\nonexcomp}{\widetilde{\spini}_j}
\newcommand{\nonexcompi}{\widetilde{\spini}_{j'}}
\newcommand{\osquare}[1]{{#1}^{\otimes 2}}
\newcommand{\paut}{\underline{\aut}}
\newcommand{\PP}[1][]{\mathbb{P}^{#1}}
\newcommand{\rsbt}{Reid-Shepherd-Barron-Tai}
\newcommand{\RT}[1][3g-3]{\frac{1}{n}\sum_{j=1}^{#1}a_j}
\newcommand{\sgbar}[1][g]{\overline{S}_{#1}}
\newcommand{\spin}{(\spini,\spinii,\spiniii)}
\newcommand{\spinii}{L}
\newcommand{\spini}{X}
\newcommand{\spiniii}{b}
\newcommand{\stab}{C}
\newcommand{\ZZ}[1][]{\mathbb{Z}^{#1}}
\newcommand{\1}{\mathbb{I}}
\DeclareMathOperator{\aut}{Aut}
\DeclareMathOperator{\auto}{Aut_0}
\DeclareMathOperator{\eig}{Eig}
\DeclareMathOperator{\ext}{Ext}
\DeclareMathOperator{\fix}{Fix}
\DeclareMathOperator{\GL}{GL}
\DeclareMathOperator{\identity}{id}
\DeclareMathOperator{\im}{im}
\DeclareMathOperator{\lcm}{lcm}
\DeclareMathOperator{\ord}{ord}
\DeclareMathOperator{\sext}{\mathcal{E}\!\mathit{xt}}
\DeclareMathOperator{\sing}{sing}
\newcounter{mylist}
\newenvironment{mylist}%
{\begin{list}%
	{(\roman{mylist})}%
	{\usecounter{mylist}%
	 \setlength{\rightmargin}{0pt}%
	 \setlength{\leftmargin}{0pt}%
	 \setlength{\itemindent}{0.5em}%
	 \setlength{\itemsep}{0pt}%
	 \setlength{\parsep}{0ex plus0.1ex}
	 \setlength{\topsep}{0ex}}}%
{\end{list}}
\begin{document}
\setlength{\unitlength}{1cm}
\author{Katharina Ludwig}
\title{On the geometry of the moduli space of spin curves}
\begin{abstract}
We determine the smooth locus and the locus of canonical singularities in the Cornalba compactification $\sgbar$ of the moduli space $S_g$ of spin curves, i.e., smooth curves of genus $g$ with a theta characteristic. Moreover, the following lifting result for pluricanonical forms is proved: Every pluricanonical form on the smooth locus of $\sgbar$ extends holomorphically to a desingularisation of $\sgbar$.
\end{abstract}
\maketitle
\section{Introduction}
The moduli space $\sgbar$ constructed by M.~Cornalba in~\cite{co1989} compactifies the moduli space $S_g$ of smooth spin curves (over $\CC$). These are pairs $(\stab,\spinii)$ of a smooth curve of genus $g\geq 2$ and a theta characteristic $\spinii$ on $\stab$. Points in the boundary of this compactification correspond to certain line bundles on nodal curves and these lie naturally over points in the boundary of the Deligne-Mumford compactification $\mgbar$ of the moduli space $M_g$ of smooth curves of genus $g$ (see~\cite{demu1969}). In particular, there exists a natural morphism $\pi:\sgbar\rightarrow\mgbar$ which sends the moduli point of a spin curve to the moduli point of the underlying curve. The map $\pi$ is a ramified cover of degree $2^{2g}$.

Several authors constructed compactifications of $S_g$ and the moduli spaces of higher spin curves, where pointed smooth curves together with an $r$th root of a suitably twisted canonical bundle are considered. T.~J.~Jarvis in~\cite{ja1998,ja2000} included stable curves with certain rank $1$ torsion-free sheaves into the moduli problem. In~\cite{cacaco2004} L.~Caporaso, C.~Casagrande and Cornalba generalised the approach via line bundles on nodal curves to higher spin curves. Other compactifications were given by D.~Abramovich and Jarvis in~\cite{abja2003} and by A.~Chiodo in his thesis~\cite{ch2003} by means of line bundles on ``twisted curves'', these are, roughly spoken, stable curves with a stack structure at some nodes. In the case of ``ordinary'' spin curves all these different approaches give moduli spaces isomorphic to Cornalba's $\sgbar$. In 1991 Witten conjectured that certain intersection numbers on the moduli spaces of $r$-spin curves can be arranged into a power series that satisfies the $r$-KdV (or $r$th higher Gelfand Dikii) hierarchy of partial differential equations. This conjecture was proved by C.~Faber, S.~Shadrin and D.~Zvonkine in~\cite{fashzv2006}.

In a different direction one can ask for which values of $g$ the connected components $\sgbar^+$ and $\sgbar^-$ of $\sgbar$ are of general type, where $\sgbar^+$ resp. $\sgbar^-$ is the irreducible moduli space of even resp. odd spin curves. It is clear that for all $g$ such that $\mgbar$ is of general type $\sgbar^\pm$ are also because of the finite morphisms $\pi^\pm:\sgbar^\pm\rightarrow\mgbar$. Hence by the results of J.~Harris, D.~Mumford and D.~Eisenbud~\cite{eiha1987,hamu1982} and G.~Farkas~\cite{faM22} $\sgbar^\pm$ is of general type for $g\geq 24$ and $g=22$. One important ingredient for these results is the fact that for $g\geq 4$ every pluricanonical form on $\mgbar^0$, the locus of curves with trivial automorphism group, extends to a desingularisation $\widetilde{M}_g$ of $\mgbar$ (see~\cite{hamu1982}). This lifting result implies that in order to determine the Kodaira dimension of $\mgbar$ it is enough to understand the spaces of global pluricanonical forms on $\mgbar$ and one does not have to concern oneself with the desingularisation. We will prove an analogous result for pluricanonical forms on the smooth locus of $\sgbar$. About the question for which $g$ the moduli space $\sgbar^\pm$ is rational or unirational little seems to be known. G.~Bini and C.~Fontanari prove in their article~\cite{bifo2004} that the moduli space of even $n$-pointed spin curves of genus $1$ is unirational for $n\leq 10$ and has Kodaira dimension $1$ for $n\geq 12$.

Sections~2 and~3 of this article focus on the local (analytic) structure of the moduli space $\sgbar$. As in the case of $\mgbar$ an analytic neighbourhood of the moduli point of a spin curve $\spin$ in $\sgbar$ is isomorphic to the quotient $V/G$ of a $3g-3$-dimensional vector space $V$ with respect to a finite group $G$. This group is essentially the automorphism group of the spin curve under consideration. We give a criterion for the smoothness of the point $[\spin]\in\sgbar$ in terms of geometrical properties of the spin curve $\spin$. In Section~3 a detailed analysis of the occurring quotients gives a description of the locus of canonical singularities of $\sgbar$ with the help of the \rsbt{} criterion. The morphism $\pi$ plays an important role in these calculations, since it establishes a connection between the well understood singularities of $\mgbar$ (see~\cite{hamu1982}) and those of $\sgbar$. 

These local results are then used in Section~4 to prove that all pluricanonical forms on the smooth locus $\sgbar^{\text{reg}}$, i.e.,~sections in $\Gamma(\sgbar^{\text{reg}},\calO_{\sgbar}(kK_{\sgbar}))$, extend holomorphically to a desingularisation $\widetilde S_g$ of $\sgbar$. An important ingredient is the analogous result for $\mgbar$ by Harris and Mumford.

The results explained in this article are those of my PhD thesis. I am indebted to my advisor K.~Hulek for his guidance and encouragement. My thanks also go to L.~Caporaso and B.~Hassett for very helpful discussions.

\section{The non-singular locus of $\sgbar$}
In this section the coarse moduli space $\sgbar$ of spin curves constructed by Maurizio Cornalba in his article~\cite{co1989} and its non-singular locus $\sgbar^{\text{reg}}$ will be described. 
\begin{defn}
\begin{mylist}
\item A curve $\spini$ of (arithmetic) genus $g\geq 2$ is a \emph{quasistable curve} if it is the blow up $\beta:\spini\rightarrow\stab$ of a stable curve $\stab$ of genus $g$ at a set $N\subset\sing\stab$. A rational component $E$ in $\spini$ with $\beta(E)=P\in N$ is called an \emph{exceptional component}. A node in $N$ is an \emph{exceptional node}, while a node in $\Delta=\sing\stab\setminus N$ is a \emph{non-exceptional node}.
\item Let $\spini$ be a quasistable curve. The \emph{non-exceptional subcurve} $\nonex$ of $\spini$ is defined as
\[
\nonex=\overline{\spini\setminus\bigcup E},
\] 
where the union is taken over all exceptional components $E$ of $\spini$. 
\end{mylist}
\end{defn}
\begin{rem}
Let $\beta:\spini\rightarrow\stab$  be the blow up of the stable curve $\stab$ of genus $g\geq 2$ at $N\subset\sing\stab$. Note that the restriction $\widetilde\beta=\beta_{|\nonex}:\nonex\rightarrow\stab$ is the partial normalisation of $\stab$ at $N$. Moreover, the set of irreducible components of $\stab$ and the set of \emph{non-exceptional components} of $\spini$, i.e. irreducible components of $\nonex$, are in $1:1$-correspondence. For an irreducible component $\comp$ of $\stab$ the preimage $\widetilde\beta^{-1}(\comp)$ is a partial normalisation of $\comp$. Nevertheless, we will continue to denote $\widetilde\beta^{-1}(\comp)$ by $\comp$.
\end{rem}
\begin{defn}
\begin{mylist}
\item A \emph{spin curve} of genus $g\geq 2$ is a triple $\spin$, where $\spini$ is a quasistable curve of genus $g$ with stable model $\beta:\spini\rightarrow\stab$, $\spinii$ is a line bundle on $\spini$ and $\spiniii:\osquare{\spinii}\rightarrow\beta^*\omega_\stab$ is a homomorphism, such that the restriction $\spinii_{|E}$ to any exceptional component $E$ of $\spini$ is isomorphic to $\calO_E(1)$ and the restriction of $\spiniii$ to the non-exceptional subcurve $\nonex$ induces an isomorphism
\[
\widetilde\spiniii:\osquare{\spinii}_{|\nonex}\overset{\cong}{\longrightarrow}\omega_{\nonex}.
\]
\item A spin curve is \emph{even} resp. \emph{odd} if the dimension $h^0(\spini,\spinii)$ of the space of global sections of $\spinii$ is even resp. odd.
\item A \emph{family of spin curves} over a scheme $\base$ is a triple $\cspin$, where $\cspini$ is a flat family of quasistable curves, $\cspinii$ is a line bundle on $\calX$ and $\cspiniii:\osquare\cspinii\rightarrow\beta^*\omega_{\calC/\base}$ is a homomorphism, where $\beta:\calX\rightarrow\calC$ is the stable model and $\omega_{\calC/\base}$ is the relative dualizing sheaf of the family $f_\calC:\calC\rightarrow\base$ of stable curves, such that for every closed $\basept\in\base$ the restriction $\left(\calX_\basept,\calL_{|\calX_\basept},\calB_{|\calX_\basept}\right)$ to the fibre $\calX_\basept$ of $f$ over $\basept$ is a spin curve.
\item Let $\cspin$ and $\left(f':\calX'\rightarrow\base,\cspinii',\cspiniii'\right)$ be families of spin curves over $\base$.
As in~\cite{co1991} an \emph{isomorphism} between the two is a pair $\iso$ where $\isoi:\calX\rightarrow\calX'$ and $\isoii:\isoi^*\cspinii'\rightarrow\cspinii$ are isomorphisms over $\base$ such that the diagram
\[\begindc{0}[1]
\obj(0,0)[ul]{$\isoi^*{\beta'}^*\omega_{\calC'/\base}$} 
\obj(70,0)[ur]{$\beta^*\omega_{\calC/\base}$} 
\obj(0,40)[ol]{$\osquare{(\isoi^*\cspinii')} $} 
\obj(70,40)[or]{$\osquare{\cspinii}$} 

\mor{ul}{ur}{\footnotesize$\delta$} 
\mor{ol}{or}{\footnotesize$\osquare\isoii$} 
\mor{ol}{ul}{\footnotesize$\isoi^*\cspiniii'$} 
\mor(67,40)(67,0){\footnotesize$\cspiniii$} 
\enddc\]
commutes, where $\delta$ is the canonical isomorphism. In case $\calX$ and $\calX'$ have the same stable model $\calC$ and $\isoi:\calX\rightarrow\calX'$ is an isomorphism over $\calC$ the isomorphism $\iso$ is \emph{inessential}. 
The group of automorphisms resp. inessential automorphisms of $\cspin$ is denoted by $\aut\cspin$ resp.~$\auto\cspin$.
\end{mylist}
\end{defn}
\begin{rem}\label{rem:even}
Let $\stab$ be a stable curve of genus $g\geq 2$, $N\subset\sing\stab$ a subset and $\beta:\spini\rightarrow\stab$ the blow up at $N$. Out of degree reasons $\spini$ is the \emph{support} of a spin curve, i.e. there exists a spin curve $\spin$, if and only if the set of non-exceptional nodes $\Delta=\sing\stab\setminus N$ is \emph{even}, i.e. for every irreducible component $\comp$ of $\stab$ the degree of $\nu_\stab^{-1}(\Delta)\cap\compnu$ considered as a divisor $D_j$ on the normalisation $\compnu$ is even, where $\nu_\stab:\stab^\nu\rightarrow\stab$ is the normalisation of $\stab$ (see~\cite[p. 566]{co1989}). In order to construct a spin structure $(\spinii,\spiniii)$ on $\spini$ we have to choose a line bundle $\spinii^\nu_j$ on the normalisation $\compnu$ of every non-exceptional component $\comp$ of $\spini$ such that there exists an isomorphism
\[
\spiniii_j:\osquare{\spinii^\nu_j}\rightarrow\omega_{\compnu}(D_j).
\]
Caporaso and Casagrande prove in~\cite{caca2003} that there are $2^{b_1(\Gamma(\nonex))}$ gluings of these line bundles which give non-isomorphic line bundles $\widetilde\spinii$ on the non-exceptional subcurve $\nonex$ such that the $\spiniii_j$ glue to an isomorphism $\widetilde\spiniii:\osquare{\widetilde\spinii}\rightarrow\omega_{\nonex}$. Here $b_1(\Gamma(\nonex))$ is the first Betti number of the dual graph $\Gamma(\nonex)$ of the nodal curve $\nonex$ which has a vertex for every irreducible component and an edge for every node which is incident to the vertices corresponding to the components in which the branches at the node lie. Any gluing of $\widetilde\spinii$ to an $\calO_E(1)$ for every exceptional component $E$ gives a line bundle $\spinii$ on $\spini$. Extending $\widetilde\spiniii$ to a homomorphism $\spiniii:\spinii\rightarrow\beta^*\omega_\stab$ by $0$ on the exceptional components gives a spin structure $(\spinii,\spiniii)$. Moreover, Caporaso and Casagrande show that all possible gluings in this last step give raise to isomorphic spin curves.
\end{rem}
There exists a coarse moduli space $\sgbar$ for spin curves of genus $g\geq 2$. Sending the moduli point $[\spin]\in\sgbar$ of a spin curve $\spin$ to the moduli point $[\stab]\in\mgbar$ of the stable model $\stab$ of its support $\spini$ gives a map $\pi:\sgbar\rightarrow\mgbar$.
\begin{thm} \emph{\cite[Proposition 5.2. and Lemma 6.3.]{co1989}}
The variety $\sgbar$ is normal and projective and contains the moduli space $S_g$ of smooth spin curves as a dense subvariety. Moreover, $\sgbar$ consists of two connected components $\sgbar^+$ and $\sgbar^-$, the moduli spaces of even resp. odd spin curves. The components $\sgbar^+$ and $\sgbar^-$ are irreducible. The map $\pi$ is a finite morphism of degree $2^{2g}$.
\end{thm}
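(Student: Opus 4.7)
The plan is to follow Cornalba's original construction: realise $\sgbar$ as a GIT quotient of a smooth parameter space, and then read each claim off this quotient description. First I would fix $\nu\geq 3$ large enough that for every spin curve $\spin$ of genus $g$ the line bundle $\omega_\stab^{\nu}$ is very ample and embeds $\stab$ into a projective space $\PP[N]$ of fixed dimension with fixed Hilbert polynomial. Adding the datum of $(\spinii,\spiniii)$ amounts to specifying a section of a line bundle of known relative degree on the universal quasistable curve, and yields a locally closed, $\PGL(N+1)$-invariant subscheme $H\subset\hilb$ whose orbits correspond bijectively to isomorphism classes of spin curves. A first-order deformation calculation controlled by $\mathrm{Ext}$-groups of a short complex built from $T_\stab$ and $\spinii$ shows that $H$ is smooth, and hence the GIT quotient $\sgbar:=H/\!/\PGL(N+1)$ is projective and, being the quotient of a smooth variety by a reductive group, normal. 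Density of $S_g$ follows from the existence of a one-parameter smoothing of every spin curve: using the description in Remark~\ref{rem:even}, one only needs to extend the component-wise roots $\spinii^\nu_j$ of $\omega_{\compnu}(D_j)$ across a smoothing of $\stab$, which is possible by the surjectivity of the relative Picard functor for such families.

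For the decomposition of $\sgbar$ into two components, the key point is that the parity $h^0(\spini,\spinii)\bmod 2$ is locally constant. For smooth curves this is Mumford's classical theorem on theta characteristics, and its extension to quasistable spin curves follows by pushing a semicontinuity argument through the isomorphism $\widetilde\spiniii:\osquare{\spinii}_{|\nonex}\rightarrow\omega_\nonex$: restriction to $\nonex$ reduces the computation to a (partially normalised) smooth situation, while each exceptional component contributes predictably via $h^0(\PP[1],\calO(1))=2$. This yields a clopen decomposition $\sgbar=\sgbar^+\sqcup\sgbar^-$. Irreducibility of each component then reduces, by density of $S_g$, to irreducibility of the open loci $S_g^\pm$. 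The latter is a classical monodromy statement: $\pi_1(M_g)$ acts on the $2^{2g}$ theta characteristics of a smooth genus $g$ curve through $\mathrm{Sp}(2g,\FF_2)$, transitively on even and odd characteristics separately, so $S_g^\pm$ is connected and, being smooth, irreducible; so is its closure $\sgbar^\pm$.

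The map $\pi:\sgbar\rightarrow\mgbar$ has finite fibres by the explicit enumeration of Remark~\ref{rem:even}, and since both source and target are projective it is finite. Its degree is read off over the interior $M_g\subset\mgbar$: the fibre over a smooth curve $\stab$ is the set of theta characteristics of $\stab$, a torsor under $\pic(\stab)[2]\cong(\ZZ/2\ZZ)^{2g}$, giving $\deg\pi=2^{2g}$.

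The main technical obstacle in this programme is concentrated at the boundary of $\mgbar$: one has to verify both that $H$ remains smooth along the strata where the chosen subset $N\subset\sing\stab$ is nonempty, and that the parity really does remain constant across such degenerations. Both points hinge on Cornalba's careful local model, which intertwines the exceptional components with the twist by $\omega_\nonex$ built into the definition of $\spiniii$; it is exactly this compatibility that makes the compactification behave well in the first place.
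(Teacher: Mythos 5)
This theorem is not proved in the paper: it is stated with a citation to Cornalba's original article (Proposition~5.2 and Lemma~6.3 of \cite{co1989}), and nothing further is said. So the proper benchmark for your attempt is Cornalba's own argument, and in outline your sketch is a faithful reconstruction of it: GIT quotient of a smooth locally closed $\PGL$-invariant subscheme of a Hilbert scheme, normality inherited from normality of the parameter space, density of $S_g$ from the existence of smoothings of every spin curve, the clopen decomposition from local constancy of the parity of $h^0(\spini,\spinii)$, irreducibility of $S_g^\pm$ from the transitive monodromy action of $\mathrm{Sp}(2g,\FF[])$ on even and odd theta characteristics, finiteness of $\pi$ from projectivity plus finite fibres, and degree $2^{2g}$ from the torsor structure over $M_g$.

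Two points to tighten. First, you deduce irreducibility of $S_g^\pm$ from ``connected and, being smooth, irreducible,'' but $S_g$ is \emph{not} smooth: by the paper's Theorem~\ref{thm:smooth} (read over $M_g$, where there are no elliptic tails) together with Atiyah's lifting result, the point $[(\stab,\spinii,\spiniii)]\in S_g$ is singular whenever a non-trivial $\isoi_\stab\in\aut\stab$ lifts to $(\stab,\spinii,\spiniii)$, and such lifts exist whenever $\aut\stab$ is non-trivial. The correct hypothesis is normality: $S_g^\pm$ is open in the normal variety $\sgbar$, and a connected normal variety is irreducible. Second, the constancy of parity across the boundary is the real content of \cite[Lemma~6.3]{co1989}; your remark that ``each exceptional component contributes predictably via $h^0(\PP[1],\calO(1))=2$'' is the right intuition but elides the Mayer--Vietoris bookkeeping on $\nonex\cup\bigcup E_i$ and the reduction to Mumford's parity theorem on the pointed normalisations, which is where Cornalba's argument actually lives. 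You do correctly flag both the smoothness of the parameter space along the boundary strata and this parity step as the technical heart, which is fair.
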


As in the case of $\mgbar$ the local analytic structure of $\sgbar$ at a point $[\spin]$ can be described as a quotient of the base of the local universal deformation of $\spin$ modulo the action of the automorphism group $\aut\spin$. Recall (see e.g.~\cite{hamo1998}) that the local universal deformation of a stable curve $\stab$ of genus $g\geq 2$ is (the germ at $0\in\defspace$ of) a family $\calC\rightarrow\defspace$, where $\defspace$ is smooth and $3g-3$-dimensional. The action of the automorphism group $\aut\stab$ on the special fibre $\stab$ of $\calC\rightarrow\defspace$ over $0$ extends to a fibre-preserving action on $\calC$, hence $\aut\stab$ acts on $\defspace$. Since the automorphism group is finite, its action can be linearized in suitable coordinates giving a representation $\aut\stab\rightarrow\GL(T_{0,\defspace})$ of $\aut\stab$ on the tangent space $T_{0,\defspace}$ and $\defspace/\aut\stab\cong T_{0,\defspace}/\aut\stab$ (in suitable neighbourhoods of $0\in\defspace$ and $0\in T_{0,\defspace}$). 

The tangent space $T_{0,\defspace}$ is the space of infinitesimal deformations of $\stab$ which is $\ext^1\left(\Omega_\stab^1,\calO_\stab\right)$, where $\Omega^1_\stab$ is the sheaf of K\"ahler differentials. Consider the normalisation $\nu_\stab:\stab^\nu\rightarrow\stab$ and denote by $P_i^\pm$ the two preimages of the node $P_i$ under $\nu_\stab$. For every irreducible component $\comp$ define $D_j$ to be $\{P_i^\pm|P_i\in\sing\stab\}\cap\compnu$ considered as a divisor on $\compnu$. Then there is a short exact sequence
\[
0\rightarrow \bigoplus_{\comp}H^1\left(\compnu,T_{\compnu}\left(D_j\right)\right) 
\rightarrow \ext^1\left(\Omega_\stab^1,\calO_\stab\right)
\rightarrow \bigoplus_{P_i}\sext^1\left(\Omega_\stab^1,\calO_\stab\right)_{P_i}
\rightarrow 0,
\]
where $H^1\left(\compnu,T_{\compnu}\left(D_j\right)\right)$ is the $3g(\compnu)-3+\deg D_j$-dimensional space of infinitesimal deformations of the pointed curve $(\compnu,\{P_i^\pm\}\cap\compnu)$ (with an arbitrary but fixed ordering of the points). On the right hand side $\sext^1\left(\Omega_\stab^1,\calO_\stab\right)_{P_i}$ is the space of infinitesimal deformations of the node $P_i$, i.e. if $xy=0$ is a local equation for $\stab$ at $P_i$ then $xy=t_i$ is the local universal deformation of the node and $t_i$ is a coordinate for $\sext^1\left(\Omega_\stab^1,\calO_\stab\right)_{P_i}\cong\CC$. Now choose coordinates $t_1,\dotsc,t_{3g-3}$ of $\ext^1\left(\Omega_\stab^1,\calO_\stab\right)=\CC[3g-3]_t$ compatible with the above sequence, i.e. for $i=1,\dotsc,\#\sing\stab$ the coordinate $t_i$ corresponds to the node $P_i$ and for every component $\comp$ there is a subset of the coordinates which is a coordinate system of $H^1\left(\compnu,T_{\compnu}\left(D_j\right)\right)$. The action of an automorphism $\isoi_\stab\in\aut\stab$ then induces an isomorphism $H^1\left(\compnu,T_{\compnu}\left(D_j\right)\right)\overset{\cong}{\rightarrow}H^1\left(\compinu,T_{\compinu}\left(D_{j'}\right)\right)$ where $\compi=\isoi_\stab(\comp)$. Moreover, if $P_i$ is a node and $P_{i'}=\isoi_\stab(P_i)$ its image, then the action of $\isoi_\stab$ on $\CC[3g-3]_t$ maps the coordinate $t_i$ corresponding to $P_i$ to a nonzero scalar multiple $\bar{c}_it_{i'}$ of the coordinate $t_{i'}$ corresponding to $P_{i'}$.

The local universal deformation of a spin curve has been constructed by Cornalba~\cite[p.~569ff.]{co1989}, see also~\cite[Section 3.3.]{cacaco2004} and~\cite[Theorem 2.2.]{ja2000}, in the following way. Given a spin curve $\spin$ of genus $g\geq 2$ consider the stable model $\stab$ of its support $\spini$. Let $N\subset\sing\stab$ be the set of exceptional nodes and $\Delta=\sing\stab\setminus N$ that of non-exceptional ones. Consider the morphism $\CC[3g-3]_\tau\rightarrow\CC[3g-3]_t$ defined by $t_i=\tau_i^2$ if $P_i\in N$ and $t_i=\tau_i$ else and the pull back family $\calC'=\calC\times_{\CC[3g-3]_t}\CC[3g-3]_\tau\rightarrow\CC[3g-3]_\tau$. For each $P_i\in N$ there exists a section $\{\tau_i=0\}\rightarrow\calC'$ whose image consists entirely of nodes and passes through the node $P_i$ in the central fibre. If $\beta:\calX\rightarrow\calC'$ is the blow up of the images of all these sections, then $\calX\rightarrow\CC[3g-3]_\tau$ is a family of quasistable curves with central fibre isomorphic to $\spini$. Cornalba shows that (up to changing the identification of $\spini$ with the central fibre and shrinking the neighbourhood of $0$ under consideration) the line bundle $\spinii$ on the central fibre and the homomorphism $\spiniii:\osquare\spinii\rightarrow\beta^*\omega_\stab$ can be extended to a line bundle $\cspinii$ on $\calX$ and a homomorphism $\cspiniii:\osquare\cspinii\rightarrow\beta^*\omega_{\calC'/\CC[3g-3]_\tau}$ in a unique way. Then (the germ at $0$ of) the triple $(\calX\rightarrow\CC[3g-3]_\tau,\cspinii,\cspiniii)$ is a family of spin curves and it is the local universal deformation of $\spin$. The automorphism group of $\spin$ is finite and acts linearly on $\CC[3g-3]_\tau$ and $\sgbar$ locally at $[\spin]$ is isomorphic to $\CC[3g-3]_\tau/\aut\spin$ at $0$.

Moreover, if $\iso$ is an automorphism of $\spin$ and $\isoi_\stab$ the induced automorphism of the stable model $\stab$, the action of $\iso$ on $\CC[3g-3]_\tau$ is a lift of the action of $\isoi_\stab$ on $\CC[3g-3]_t$ via $\CC[3g-3]_\tau\rightarrow\CC[3g-3]_t$. Therefore, if $P_i$ is an exceptional node of $\stab$ and $\isoi_\stab$ acts as $t_i\mapsto \bar{c}_it_{i'}$ then $P_{i'}=\isoi_\stab(P_i)$ is also exceptional and $\iso$ acts as $\tau_i\mapsto c_i\tau_{i'}$ where $c_i^2=\bar{c}_i$. The action of $\iso$ on coordinates corresponding to non-exceptional nodes or  components is the same as the action of $\isoi_\stab$ on these coordinates.

\begin{rem}\label{rem:triv}
Note that, if $\spin$ is a general spin curve then its automorphism group is $\{(\identity_\spini,\pm\identity_\spinii)\}$. Therefore, if $\spin$ is any spin curve $(\identity_\spini,\pm\identity_\spinii)\in\aut\spin$, these two automorphisms act trivially on $\CC[3g-3]_\tau$ and 
\[
\CC[3g-3]_\tau/\aut\spin=\CC[3g-3]_\tau/\paut\spin,
\]
where we consider $\paut\spin=\aut\spin/\{(\identity_\spini,\pm\identity_\spinii)\}$ as a subgroup of $\GL(\CC[3g-3]_\tau)$.
\end{rem}

As a first step we want to study the automorphism group of the spin curve $\spin$. By the definition of inessential automorphisms the following sequence is exact
\begin{align*}
0\longrightarrow\auto\spin\longrightarrow\aut\spin&\longrightarrow\aut\stab\\
\iso&\longmapsto\isoi_\stab
\end{align*}
\begin{prop}\label{prop:aut} 
Let $\spin$ be a spin curve of genus $g\geq 2$ and $\isoi_\stab$ an automorphism of the stable model $\stab$ of $\spini$. The automorphism $\isoi_\stab$ is in the image of the homomorphism $\aut\spin\rightarrow\aut\stab$, if and only if $\isoi_\stab(N)=N$ and $\widetilde\isoi^*\widetilde\spinii\cong\widetilde\spinii$, where $\widetilde\isoi:\nonex\rightarrow\nonex$ is the unique induced automorphism of the non-exceptional subcurve and $\widetilde\spinii=\spinii_{|\nonex}$. Moreover, every $\isoi_\stab$ fulfilling these conditions has exactly $2^{\#CC(\nonex)}$ preimages, where $CC(\nonex)$ is the set of connected components of $\nonex$. 
\end{prop}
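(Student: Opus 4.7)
The plan is to split the argument into two parts: characterising which automorphisms $\isoi_\stab$ lie in the image, and computing the kernel $\auto\spin$ of $\aut\spin\rightarrow\aut\stab$, since any non-empty fibre is a coset of this kernel.

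For the necessity, suppose $\iso\in\aut\spin$ projects to $\isoi_\stab$. Exceptional components of $\spini$ are intrinsically characterised as the rational curves contracted by $\beta$, so $\isoi$ must permute them; since $\beta\circ\isoi=\isoi_\stab\circ\beta$, this forces $\isoi_\stab(N)=N$. The restriction $\isoi|_{\nonex}$ commutes with the partial normalisation $\nonex\rightarrow\stab$ at $N$, so by uniqueness of such lifts it coincides with $\widetilde\isoi$, and the restriction of $\isoii$ to $\nonex$ provides the required isomorphism $\widetilde\isoi^*\widetilde\spinii\cong\widetilde\spinii$.

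For the converse I would construct a lift explicitly. Start from any $\widetilde\isoii_0:\widetilde\isoi^*\widetilde\spinii\rightarrow\widetilde\spinii$ provided by the hypothesis. Comparing $\widetilde\spiniii\circ\widetilde\isoii_0^{\otimes 2}$ with $\widetilde\isoi^*\widetilde\spiniii$ via the canonical identification $\widetilde\isoi^*\omega_{\nonex}\cong\omega_{\nonex}$ yields a global unit on $\nonex$, i.e.\ a non-zero scalar on each connected component. Rescaling $\widetilde\isoii_0$ by a square root of this scalar on each component produces a $\widetilde\isoii$ compatible with $\widetilde\spiniii$; the two sign choices on each of the $\#CC(\nonex)$ components give precisely $2^{\#CC(\nonex)}$ valid candidates for $\widetilde\isoii$.

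It remains to extend $(\widetilde\isoi,\widetilde\isoii)$ to the exceptional part of $\spini$ and $\spinii$. For each exceptional $E\cong\PP[1]$ lying over $P_i\in N$, the component $\isoi(E)$ is forced to be the exceptional component $E'$ over $\isoi_\stab(P_i)$, and $\isoi|_E$ must send the two attaching points of $E$ to those of $E'$ as prescribed by $\widetilde\isoi$. Placing these attaching points at $0,\infty$, the map $\isoi|_E$ is a scaling of $\PP[1]$ and any isomorphism $\isoi^*\calO_{E'}(1)\rightarrow\calO_E(1)$ is global multiplication by a scalar; evaluation identifies $H^0(E,\calO_E(1))$ with the direct sum of the two fibres, so the prescribed values of $\widetilde\isoii$ at the two attaching stalks pin down both the scaling $\isoi|_E$ and the scalar $\isoii|_E$ uniquely. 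This produces a well-defined lift $\iso$ and shows that the exceptional components contribute no further freedom, so the total number of lifts of $\isoi_\stab$ is exactly $2^{\#CC(\nonex)}$. The main technical obstacle I anticipate is precisely this last rigidification step: one must verify that the two attaching-point conditions on each $E$ are simultaneously realisable by a single pair $(\isoi|_E,\isoii|_E)$ and that the resulting $\iso$ respects the compatibility diagram in the definition on $E$ as well (where $\spiniii$ vanishes, but this still needs to be recorded); both rest on the fact that $\calO_{\PP[1]}(1)$ is generated by its fibres at any two distinct points.
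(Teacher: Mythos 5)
Your argument follows the paper's proof closely: necessity is the same observation that an automorphism of $\spini$ must permute the contracted rational components and restrict on $\nonex$ to an isomorphism $\widetilde\isoi^*\widetilde\spinii\cong\widetilde\spinii$; for sufficiency you rescale a given $\widetilde\isoii_0$ componentwise by a square root of the discrepancy scalar, which is exactly the paper's $\eta_j$/$\lambda_j$ count giving $2^{\#CC(\nonex)}$ choices of $\widetilde\isoii$. The only place you diverge is the last step: where the paper simply invokes the proof of Lemma~2.3.2 of \cite{cacaco2004} to extend $(\widetilde\isoi,\widetilde\isoii)$ uniquely over the exceptional components, you prove this directly, observing that evaluation gives $H^0(E,\calO_E(1))\cong\calO_E(1)_{Q_1}\oplus\calO_E(1)_{Q_2}$ and that the two prescribed fibre values determine both the scaling $\isoi|_E$ and the scalar $\isoii|_E$ (concretely, the map $(c,\lambda)\mapsto(\lambda c,\lambda)$ is a bijection of $\CC^*\times\CC^*$). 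That is a correct and self-contained replacement for the citation; it buys transparency at essentially no cost. One cosmetic remark: you announce a strategy of computing the kernel $\auto\spin$ and counting fibres as cosets, but you never actually do this; instead you count the $\widetilde\isoii$ directly during the construction. The executed argument is fine, so this is only a mismatch between stated plan and proof.
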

\begin{defn}
Let $\spin$ be a spin curve of genus $g\geq 2$ with stable model $\stab$. An automorphism $\isoi_\stab\in\aut\stab$ \emph{lifts to the spin curve $\spin$} if $\isoi_\stab$ lies in the image of the homomorphism $\aut\spin\rightarrow\aut\stab$.
\end{defn}
\begin{rem}
Note that in general the resulting short exact sequence
\begin{multline*}
0\rightarrow\auto\spin\rightarrow\aut\spin
\rightarrow \{\isoi_\stab\in\aut\stab|\isoi_\stab \text{ lifts to }\spin\} \rightarrow 0
\end{multline*}
does not split.
\end{rem}
\begin{proof}
It is obvious that the image $\isoi_\stab$ of an automorphism $\iso\in\aut\spin$ permutes the set of exceptional nodes and the restriction of $\isoii:\isoi^*\spinii\rightarrow\spinii$ to $\nonex$ gives an isomorphism between $\widetilde\isoi^*\widetilde\spinii$ and $\widetilde\spinii$.

Let $\isoi_\stab$ be an automorphism of $\stab$ such that $\isoi_\stab(N)=N$ and $\varphi:\widetilde\isoi^*\widetilde\spinii\overset{\cong}{\longrightarrow}\widetilde\spinii$ an isomorphism. For every connected component $\nonexcomp$ of $\nonex$ there exists a unique scalar $\eta_j$ such that the following diagram of isomorphisms commutes
\[\begindc{0}[1]
\obj(0,0)[ul]{$(\widetilde\isoi^*\omega_{\nonex})_{|\nonexcomp}$} 
\obj(70,0)[ur]{$\omega_{\nonexcomp}$} 
\obj(0,40)[ol]{$\osquare{(\widetilde\isoi^*\widetilde\spinii)}_{|\nonexcomp} $} 
\obj(70,40)[or]{$\osquare{\widetilde\spinii}_{|\nonexcomp}$} 

\mor{ul}{ur}{\footnotesize} 
\mor{ol}{or}{\footnotesize$\eta_j\osquare\varphi_{|\nonexcomp}$} 
\mor{ol}{ul}{\footnotesize$(\widetilde\isoi^*\widetilde\spiniii)_{|\nonexcomp}$} 
\mor{or}{ur}{\footnotesize$\widetilde\spiniii_{|\nonexcomp}$} 
\enddc\]
Therefore, there are exactly two isomorphisms $\widetilde\isoii_j:(\widetilde\isoi^*\widetilde\spinii)_{|\nonexcomp}\rightarrow \widetilde\spinii_{|\nonexcomp}$ such that $\osquare{\widetilde\isoii_j}$ makes the above diagram commutative, namely $\lambda_j\varphi_{|\nonexcomp}$ where $\lambda_j$ is one of the two roots of $\eta_j$. Hence there are exactly $2^{\#CC(\nonex)}$ isomorphisms $\widetilde\isoii:\widetilde\isoi^*\widetilde\spinii\rightarrow\widetilde\spinii$ compatible with the isomorphisms to the canoncial bundle. The proof of Lemma~2.3.2. in~\cite{cacaco2004} shows that for every such $\widetilde\isoii$ there exists a unique extension $\iso\in\aut\spin$ of $(\widetilde\isoi,\widetilde\isoii)$.
\end{proof}

In the description of the non-singular locus $\sgbar^{\text{reg}}$ of $\sgbar$ the following graph plays an important role.
\begin{defn}
Let $\spini$ be a quasistable curve. The graph $\Sigma(\spini)$ consists of one vertex $v(\nonexcomp)$ for every connected component $\nonexcomp$ of the non-exceptional subcurve $\nonex$ of $\spini$ and one edge $e(E_i)$ for every exceptional component $E_i$ of $\spini$. If $E_i$ meets the non-exceptional subcurve in the connected components $\nonexcomp$ and $\nonexcompi$, the edge $e(E_i)$ is incident to the vertices $v(\nonexcomp)$ and $v(\nonexcompi)$. In case $\nonexcomp=\nonexcompi$ the edge $e(E_i)$ is a loop. 
\end{defn}
\begin{rem}\label{rem:iness}
Let $\spin$ be a spin curve, $\Sigma(\spini)$ the above defined graph and denote by $V(\Sigma(\spini))$ its set of vertices. Then it follows from Proposition~\ref{prop:aut} that $\auto\spin$ is isomorphic to $\ZZ[{V(\Sigma(\spini))}]_2$. If $(\isoii_j)_j\in\ZZ[{V(\Sigma(\spini))}]_2$ is given, the corresponding inessential automorphism $\iso$ is determined by requiring that $\isoi_{|\nonex}=\identity_{\nonex}$ and $\isoii_{|\nonexcomp}:\spinii_{|\nonexcomp}\rightarrow\spinii_{|\nonexcomp}$ is $(-1)^{\isoii_j}\identity_{\spinii_{|\nonexcomp}}$, i.e. $\isoii$ is multiplication with $(-1)^{\isoii_j}$ in every fibre of $\spinii$ over $\nonexcomp$ (see~\cite{caca2003}).
\end{rem}
\begin{defn}
A graph $\Gamma$ is a \emph{tree} if $\Gamma$ is connected and the first Betti number $b_1(\Gamma)$ is zero. A graph $\Gamma$ is \emph{tree-like} if the graph obtained from $\Gamma$ by removing all loops is a tree.
\end{defn}
Moreover, elliptic tails of the stable model $\stab$ of the support $\spini$ give raise to special cases. 
\begin{defn}
Let $\stab$ be a stable curve of genus $g\geq 2$. An irreducible component $\comp$ of $\stab$ is an \emph{elliptic tail} if its arithmetic genus is $1$ and $\comp$ meets the rest of the curve in exactly one node $P$, which is then called an \emph{elliptic tail node}. The elliptic tail $\comp$ is \emph{smooth} if $\comp$ is a smooth elliptic curve. Otherwise $\comp$ is a rational curve with one node and the elliptic tail is called \emph{singular}. We choose the node $P$ as the origin of the elliptic curve. A non-trivial automorphism $\isoi_\stab\in\aut\stab$ is called an \emph{elliptic tail automorphism of order $n$ with respect to the elliptic tail $\comp$} if $\isoi_\stab$ is the identity on $\overline{\stab\setminus\comp}$ and ${\isoi_\stab}_{|\comp}$ has order $n$. 
\end{defn}

\begin{thm}\label{thm:smooth}
Let $\spin$ be a spin curve of genus $g\geq 4$ with stable model $\stab$. The moduli space $\sgbar$ is smooth at the point $[\spin]$ if and only if the following two conditions are fulfilled.
\begin{mylist}
\item The graph $\Sigma(\spini)$ is tree-like.
\item The subgroup $\{\isoi_\stab\in\aut\stab|\isoi_\stab\text{ lifts to }\spin\}$ is generated by elliptic tail automorphisms of order $2$.
\end{mylist}
\end{thm}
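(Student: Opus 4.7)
The plan is to apply the Chevalley--Shephard--Todd theorem to the local analytic description $\sgbar \cong \CC[3g-3]_\tau/\paut\spin$ at $[\spin]$ given before Remark~\ref{rem:triv}, using that $\paut\spin$ acts faithfully on $T=\CC[3g-3]_\tau$. Smoothness of $\sgbar$ at $[\spin]$ is then equivalent to $\paut\spin$ being generated by pseudo-reflections. The key structural observation I would begin with is that $\paut\spin$ preserves the direct sum decomposition $T=V_N\oplus V_\Delta$, where $V_N$ is spanned by the exceptional node coordinates $\tau_i$ (with $P_i\in N$) and $V_\Delta$ by the remaining node and component coordinates. Indeed, essential automorphisms satisfy $\isoi_\stab(N)=N$ by Proposition~\ref{prop:aut}, and inessential ones act trivially on $V_\Delta$ by Remark~\ref{rem:iness}. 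Since a pseudo-reflection fixes a hyperplane, every pseudo-reflection in $\paut\spin$ must act as the identity on one of the two summands.

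I would then treat the two resulting classes of pseudo-reflections separately. For those acting trivially on $V_\Delta$ (hence lying in $\paut_0\spin$), Remark~\ref{rem:iness} describes the action on $V_N$ as $\tau_i\mapsto(-1)^{\isoii_j+\isoii_{j'}}\tau_i$, where $e(E_i)$ joins $v(\nonexcomp)$ and $v(\nonexcompi)$. This is precisely the $\ZZ_2$-coboundary map of the graph obtained from $\Sigma(\spini)$ by deleting all loops, so loop coordinates are always fixed. Such an element is a pseudo-reflection exactly when a single non-loop coordinate is negated, which an elementary graph argument shows is equivalent to the corresponding edge being a bridge. Hence $\paut_0\spin$ is generated by its pseudo-reflections if and only if every non-loop edge of $\Sigma(\spini)$ is a bridge, i.e.\ $\Sigma(\spini)$ is tree-like. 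This is condition (i).

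For pseudo-reflections acting trivially on $V_N$, the image in $\aut\stab$ acts as a pseudo-reflection on the subspace of $T_\stab$ corresponding to $V_\Delta$, and here I would invoke the Harris--Mumford analysis~\cite{hamu1982}, valid for $g\geq 4$, which identifies the pseudo-reflections in $\aut\stab\subset\GL(T_\stab)$ with elliptic tail automorphisms of order two. Such an involution fixes its tail node; if the tail node is non-exceptional, the lift acts as $\tau_i\mapsto-\tau_i$ and trivially elsewhere, giving an honest essential pseudo-reflection in $\paut\spin$. If the tail node is exceptional, the lift is $\tau_i\mapsto\pm i\tau_i$, a pseudo-reflection acting on $V_N$ rather than $V_\Delta$; but in that case $\Sigma(\spini)$ automatically has $e(E_i)$ as a bridge (the tail component is joined to the rest only through that exceptional component), and the square of the lift is precisely the inessential pseudo-reflection flipping $\tau_i$, so no tension arises with (i).

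Sufficiency then follows by combining both analyses: under (i), $\paut_0\spin$ is already generated by pseudo-reflections; under (ii), lifts of the generating elliptic tail involutions, adjusted by elements of $\paut_0\spin$ via Proposition~\ref{prop:aut} if needed, can be taken to be pseudo-reflections, and together with $\paut_0\spin$ they generate $\paut\spin$. Necessity is obtained by projecting any generating set of pseudo-reflections of $\paut\spin$ to $\aut\stab$ and invoking Harris--Mumford to derive (ii), and then tracking the inessential pseudo-reflections to derive (i). The hardest step I anticipate is precisely this last bookkeeping: one has to rule out the possibility that a non-tree-like $\Sigma(\spini)$ is compensated by squares of exceptional-tail-node lifts, and show that the genuinely inessential pseudo-reflections plus those squares are still forced to generate all of $\paut_0\spin$, which is where the observation that exceptional tail nodes automatically yield bridges will be used in full.
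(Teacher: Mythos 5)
Your proposed decomposition $T=V_N\oplus V_\Delta$ is a reasonable starting point, and several ingredients (the coboundary description of $\auto\spin$, the bridge/tree-like graph argument, the observation that elliptic tail node edges are bridges) match the paper's proof, which rests on the classification in Proposition~\ref{prop:qrsg}. However, there is a genuine structural error that propagates through the whole argument: the parenthetical ``\emph{(hence lying in $\paut_0\spin$)}'' for pseudo-reflections acting trivially on $V_\Delta$ is false, and the ``second class'' of pseudo-reflections acting trivially on $V_N$ is in fact empty. The reason is a fact you never invoke: an elliptic tail node is \emph{always} exceptional (Remark~\ref{rem:even} -- the non-exceptional node set must be even on every component, and an elliptic tail meets the rest of the curve in exactly one node, which therefore cannot be non-exceptional). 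Consequently a lift $\iso$ of an elliptic tail automorphism of order $2$ acts as $\tau_i\mapsto\pm i\,\tau_i$ on the corresponding exceptional node coordinate and \emph{trivially on all of $V_\Delta$}: it is an essential pseudo-reflection, yet it lands squarely in your ``first class.'' The sub-case you consider where the tail node is non-exceptional never occurs, and the dichotomy on which your two-stage argument rests -- first class $\leftrightarrow$ inessential $\leftrightarrow$ condition~(i), second class $\leftrightarrow$ essential $\leftrightarrow$ condition~(ii) -- breaks down.

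Concretely, this matters in both directions. In the sufficiency step, the pseudo-reflections available to generate $\paut\spin$ are (a) bridge-indicator inessential involutions $\tau_i\mapsto-\tau_i$ and (b) elliptic tail lifts $\tau_i\mapsto\pm i\,\tau_i$, and \emph{both} live entirely inside $\GL(V_N)\times\{\identity_{V_\Delta}\}$; the argument ``lifts of the generating elliptic tail involutions, adjusted by $\paut_0\spin$, can be taken to be pseudo-reflections'' needs Proposition~\ref{prop:qrsg}(ii) or its equivalent to identify \emph{which} of the $2^{\#CC(\nonex)}$ lifts is a pseudo-reflection, and this is not a matter of adjusting by $\paut_0\spin$ alone (one must pick the lift acting as the identity on $\overline{\spini\setminus(\comp\cup E_j)}$). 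In the necessity step you do anticipate the crucial point -- that squares of type-(b) pseudo-reflections are bridge indicators and therefore cannot produce a coboundary supported on a non-bridge edge -- but the bookkeeping you leave for the ``hardest step'' is in fact the whole proof of necessity for~(i), and your framing obscures why there is nothing else to worry about: namely, that \emph{every} pseudo-reflection in $\paut\spin$ is diagonal and supported on a single bridge or elliptic-tail-node coordinate (the content of Remark~\ref{rem:H}). I would recommend first proving the classification of quasireflections (the analogue of Proposition~\ref{prop:qrsg}) carefully, explicitly using that elliptic tail nodes are exceptional, and only then running the generation argument; as written, the proof cannot be assembled from the sketch without first repairing the dichotomy.
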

Since $\sgbar$ at the point $[\spin]$ is locally isomorphic to $\CC[3g-3]_\tau/\paut\spin$ at $0$, the point $[\spin]\in\sgbar$ is smooth if and only if $\paut\spin\subset\GL(\CC[3g-3]_\tau)$ is generated by \emph{quasireflections}, i.e. by elements in $\GL(\CC[3g-3]_\tau)$ having $1$ as an eigenvalue of multiplicity exactly $3g-4$ (see e.g.~\cite{pr1967}). 
\begin{prop}\label{prop:qrsg}
Let $\iso$ be an automorphism of a spin curve $\spin$ of genus $g\geq 4$ and $\isoi_\stab$ the induced automorphism on the stable model $\stab$. Then $\iso$ acts on $\CC[3g-3]_\tau$ as a quasireflection exactly in the following cases:
\begin{mylist}
\item $\iso$ is inessential and there exists an exceptional component $E$ of $\spini$ such that $\overline{\spini\setminus E}$ consists of two connected components $\spini_1$ and $\spini_2$ such that $\isoii$ is multiplication with $1$ resp. $-1$ in every fibre of $\spinii$ over $\spini_1$ resp. $\spini_2$.
\item There exists an elliptic tail $\comp$ of $\stab$ such that $\isoi_\stab$ is the elliptic tail automorphism of order $2$ with respect to $\comp$ and $\isoi$ restricted to $\overline{\spini\setminus(\comp\cup E_j)}$ is the identity, where $E_j$ is the exceptional component meeting $\comp$. 
\end{mylist}
\end{prop}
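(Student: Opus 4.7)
The plan is to treat separately the cases in which the image $\isoi_\stab \in \aut\stab$ of $\iso$ is trivial (the \emph{inessential} case) and non-trivial (the \emph{essential} case). Recall that an element of $\paut\spin \subset \GL(\CC[3g-3]_\tau)$ is a quasireflection exactly when the eigenvalue $1$ appears with multiplicity $3g-4$. Since $\CC[3g-3]_\tau$ decomposes naturally into the blocks $H^1(\compnu, T_{\compnu}(D_j))$ (one per component $\comp$ of $\stab$) and the one-dimensional blocks $\CC\cdot\tau_i$ (one per node $P_i$), a quasireflection must preserve this decomposition block by block, without any genuine permutations, and act trivially on every summand but one.

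\textbf{Inessential case.} If $\isoi_\stab = \identity_\stab$, then by Remark~\ref{rem:iness} $\iso$ is encoded by a tuple $(\isoii_j)_j \in \ZZ[V(\Sigma(\spini))]_2$. Since $\iso$ is the identity on $\nonex$, the action on every non-exceptional node coordinate and every component-deformation coordinate is trivial. A local computation on an exceptional component $E_i$, using that $\isoii$ multiplies every fibre of $\spinii$ over $\nonexcomp$ by $(-1)^{\isoii_j}$, shows that the action on the coordinate $\tau_i$ is multiplication by $(-1)^{\isoii_{j^+}+\isoii_{j^-}}$, where $\widetilde{\spini}_{j^+}$ and $\widetilde{\spini}_{j^-}$ are the (possibly coinciding) connected components of $\nonex$ meeting $E_i$. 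Since the diagonal tuple in $\ZZ[V(\Sigma(\spini))]_2$ represents the identity in $\paut\spin$, $\iso$ is a non-trivial quasireflection if and only if $(\isoii_j)$ takes exactly two values and exactly one non-loop edge $e(E)$ of $\Sigma(\spini)$ separates the preimages of these two values. Equivalently, $E$ is a bridge of $\Sigma(\spini)$ whose removal partitions $\spini$ into two connected components $\spini_1, \spini_2$ on which $\isoii$ acts by $+1$ resp.~$-1$; this is condition~(i).

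\textbf{Essential case.} If $\isoi_\stab\neq\identity_\stab$, I first claim that $\isoi_\stab$ itself is a quasireflection on $\CC[3g-3]_t$. Indeed, the component-wise squaring-or-identity map $\CC[3g-3]_\tau\to\CC[3g-3]_t$ carries the fixed locus of $\iso$ into the fixed locus of $\isoi_\stab$, so the latter has codimension at most one, and is strictly smaller than $\CC[3g-3]_t$ by the faithfulness of the $\aut\stab$-action on the deformation space of $\stab$. The Harris-Mumford analysis~\cite{hamu1982} of such quasireflections (valid for $g\geq 4$) then forces $\isoi_\stab$ to be the order-$2$ elliptic tail automorphism attached to a uniquely determined elliptic tail $\comp$ of $\stab$, and it acts on $\CC[3g-3]_t$ as $-1$ on the node coordinate $t_P$ of the elliptic tail node $P$ and trivially on all other coordinates. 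It remains to determine when some lift $\iso$ of $\isoi_\stab$ acts as a quasireflection on $\CC[3g-3]_\tau$. Since $\isoi_\stab$ acts trivially on every component-deformation coordinate and on every non-exceptional node coordinate other than $t_P$, the corresponding actions of $\iso$ are also trivial. The remaining coordinates are $\tau_P$ (acting as $-1$ or as a primitive fourth root of unity according to whether $P\notin N$ or $P\in N$) and the $\tau_i$ for exceptional nodes $P_i\neq P$ fixed by $\isoi_\stab$, whose scalars $c_i$ are square roots of $\bar c_i=1$ and hence lie in $\{\pm 1\}$. The $\auto\spin\cong\ZZ[|CC(\nonex)|]_2$-worth of inessential adjustments permits sign changes on these $\tau_i$, subject to the global constraint that the resulting $\isoii$ assembles to a bundle isomorphism compatible with $\spiniii$. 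A case-by-case analysis (depending on whether $P$ is exceptional, whether $\comp$ is smooth or singular, and which exceptional components are incident to $\comp$) shows that the signs can be arranged so that the lift is a quasireflection precisely when $\iso$ is the identity on $\overline{\spini\setminus(\comp\cup E_j)}$, with $E_j$ the exceptional component at $P$ if one exists; this is condition~(ii).

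The hardest step I anticipate is this last lifting analysis, where one must verify that the inessential ambiguity is both necessary and sufficient to force all spurious exceptional-node eigenvalues to $+1$ while preserving the unique non-trivial eigenvalue above $t_P$. The inessential case is then a purely combinatorial statement about $\Sigma(\spini)$ and the essential case reduces, once the lifting step is under control, to the corresponding Harris-Mumford result for $\mgbar$.
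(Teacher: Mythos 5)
Your decomposition by whether $\isoi_\stab$ is trivial is a reorganization of the paper's case split (the paper instead distinguishes which block of $\CC[3g-3]_\tau$ carries the unique eigenvalue $\neq 1$), and your dimension-count via the squaring map $\CC[3g-3]_\tau\to\CC[3g-3]_t$ to show that a non-trivial $\isoi_\stab$ is itself a quasireflection on $\CC[3g-3]_t$ is a genuine shortcut that bypasses the paper's three-way case analysis. However, both arguments lean on the effectivity of the $\aut\stab$-action on the deformation space (you cite it as ``faithfulness''; the paper buries it in the appeal to Remark~\ref{rem:qrmg} when passing from ``$\isoi_\stab$ acts trivially'' to ``$\isoi_\stab=\identity$''), so this is a wash.

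The real gap is that you never carry out the local computation that is the actual content of the proof: the equivalence, for an exceptional component $E_i$ meeting connected pieces $\widetilde{X}_{j'}$ and $\widetilde{X}_{j''}$ of $\nonex$, between ``$\tau_i\mapsto\tau_i$'' and ``$\isoii_{j'}=\isoii_{j''}$'' (equivalently, $\isoi$ is the identity on $E_i$). You invoke it twice — once as ``a local computation on an exceptional component $E_i$ \dots shows that the action on $\tau_i$ is multiplication by $(-1)^{\isoii_{j^+}+\isoii_{j^-}}$'' and once as ``a case-by-case analysis \dots shows that the signs can be arranged'' — but this is precisely the step that requires an argument. The paper proves it by restricting the universal deformation $(\calX\to\CC[3g-3]_\tau,\cspinii,\cspiniii)$ to the $\tau_i$-axis and observing that $\tau_i\mapsto\tau_i$ holds if and only if the automorphism $\iso$ deforms with the family that smooths $E_i$, which happens if and only if the two inessential signs agree. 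Without this deformation-theoretic (or an equivalent explicit coordinate) argument, both your inessential case and the conclusion of your essential case are unsupported assertions. You also hedge on whether the elliptic tail node $P$ lies in $N$ (``acting as $-1$ or as a primitive fourth root of unity according to whether $P\notin N$ or $P\in N$''), but $P$ is \emph{always} exceptional: this follows from the evenness condition in Remark~\ref{rem:even}, since otherwise the divisor $D_j$ on the elliptic tail's normalisation would have odd degree. The paper needs this to rule out its Case 2, and your statement of condition (ii) presupposes the existence of the exceptional component $E_j$ over $P$, so you need to establish it as well. Finally, in the essential case you slip into discussing ``which lift is a quasireflection'' and ``the signs can be arranged,'' which is a different question; the proposition is about the given $\iso$, not about the existence of a well-behaved lift, so the $\auto\spin$-ambiguity you invoke is not available to you.
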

\begin{rem}\label{rem:qrmg}
Any automorphism $\isoi_\stab$ of a stable curve $\stab$ of genus at least $4$ acts on $\CC[3g-3]_t$ as a quasireflection if and only if $\isoi_\stab$ is an elliptic tail automorphism of order $2$. If $\comp$ is the elliptic tail such that ${\isoi_\stab}_{|\comp}$ is the elliptic involution, denote by $P_j$ the node on $\comp$ and by $t_j$ the corresponding coordinate. Then $\isoi_\stab$ acts as $t_j\mapsto -t_j$ and $t_i\mapsto t_i$ else. This follows easily from Theorem~2 in~\cite{hamu1982} and implies that the smooth locus $\mgbar^{\text{reg}}$ of $\mgbar$ is
\[
\mgbar^{\text{reg}}=\left\{[\stab]\in\mgbar\left|\parbox{33ex}{$\aut\stab$ is generated by elliptic tail automorphisms of order $2$}\right.\right\}
\]
\end{rem}
\begin{proof}[Proof of Proposition~\ref{prop:qrsg}]
Let $\iso$ be an automorphism acting as a quasireflection on $\CC[3g-3]_\tau$ and $\isoi_\stab$ the induced automorphism of the stable model $\stab$. The action of $\isoi_\stab$ on $\CC[3g-3]_t$ decomposes into an action on $\bigoplus_{\comp}H^1\left(\compnu,T_{\compnu}(D_j)\right)$ and an action on $\bigoplus_{P_i\in\sing\stab}\CC_{t_i}$. Since $\isoi_\stab$ fixes the set $N$ of exceptional nodes the second action decomposes into an action on $\bigoplus_{P_i\in N}\CC_{t_i}$ and one on $\bigoplus_{P_i\in\Delta}\CC_{t_i}$, where $\Delta=\sing\stab\setminus N$. The action of $\iso$ on $\CC[3g-3]_\tau$ decomposes accordingly. $\iso$ acts as a quasireflection, hence there is exactly one eigenvalue $\xi\neq1$. 

\emph{Case 1: $\xi$ is an eigenvalue of the action on $\bigoplus_{\comp}H^1\left(\compnu,T_{\compnu}(D_j)\right)$.} In particular the action on the coordinates corresponding to nodes is trivial. Therefore, $\isoi_\stab$ acts on the $t$-coordinates as $\iso$ does on the $\tau$-coordinates. This means that $\isoi_\stab$ acts trivially on $\bigoplus_{P_i\in\sing\stab}\CC_{t_i}$ and as a quasireflection on $\bigoplus_{\comp}H^1\left(\compnu,T_{\compnu}(D_j)\right)$, which contradicts Remark~\ref{rem:qrmg}.

\emph{Case 2: $\xi$ is an eigenvalue of the action on $\bigoplus_{P_i\in\Delta}\CC_{\tau_i}$.} The action on the coordinates corresponding to exceptional nodes is trivial as well as the action on the coordinates corresponding to components. For all nodes $P_i\in\Delta$ we have $\tau_i=t_i$. As above this implies that the action of $\isoi_\stab$ is the same as that of $\iso$. By Remark~\ref{rem:qrmg} there is a node $P_j$ on an elliptic tail $\comp$ and $t_j\mapsto -t_j$ while $t_i\mapsto t_i$ else. Since $\xi=-1$ is an eigenvalue of the action of $\isoi_\stab$ on $\bigoplus_{P_i\in\Delta}\CC_{t_i}$, the node $P_j$ is non-exceptional, i.e. $P_j\in\Delta$. But a node connecting an elliptic tail to the rest of the curve is exceptional by Remark~\ref{rem:even}, hence this case is impossible.

\emph{Case 3: $\xi$ is an eigenvalue of the action on $\bigoplus_{P_i\in N}\CC_{\tau_i}$.} Consider the action of $\isoi_\stab$ on the set $N$ of exceptional nodes and let $P_{i_0},P_{i_1}=\isoi_\stab(P_{i_0}),\dotsc,P_{i_{m-1}}=\isoi_\stab^{m-1}(P_{i_0})$ be pairwise distinct, while $\isoi_\stab^m(P_{i_0})=P_{i_0}$. The action of $\isoi_\stab$ on $\bigoplus_{P_i\in N}\CC_{t_i}$ restricts to an action on $\bigoplus_{k=0}^{m-1}\CC_{t_{i_k}}$, hence the action of $\iso$ restricts to an action on $\bigoplus_{k=0}^{m-1}\CC_{\tau_{i_k}}$. We may assume that the cycle of nodes under consideration is the unique one, such that this restricted action is non-trivial. It is then a quasireflection and $\xi$ is the only eigenvalue different from~$1$. 

The action of $\iso$ on $\bigoplus_{k=0}^{m-1}\CC_{\tau_{i_k}}$ is given by $\tau_{i_k}\mapsto c_{i_k}\tau_{i_{k+1}}$ for appropriate non-zero scalars $c_{i_k}$, where the index $k$ is considered modulo $m$. A straightforward calculation yields that the eigenvalues of this action are the $m$th roots of $\prod_{k}c_{i_k}$. But the eigenvalues are $\xi,1,\dotsc,1$, therefore, either $m=2$, $c_{i_0}c_{i_1}=1$ and the eigenvalues are $1$ and $-1$ or $m=1$ and the eigenvalue is $\xi=c_{i_0}$. In case $m=2$ the action of $\isoi_\stab$ on the coordinates $t_{i_0}$ and $t_{i_1}$ is $t_{i_0}\mapsto c_{i_0}^2t_{i_1}$ and $t_{i_1}\mapsto c_{i_1}^2t_{i_0}$, which has also eigenvalues $1$ and $-1$. Hence $\isoi_\stab$ acts as a quasireflection and interchanges the two nodes $P_{i_0}$ and $P_{i_1}$. This contradicts Remark~\ref{rem:qrmg}. Therefore, $m=1$ and the node $P_{i_0}$ is a fixed point of $\isoi_\stab$. The action of $\isoi_\stab$ is $t_{i_0}\mapsto c_{i_0}^2t_{i_0}$ and $t_i\mapsto t_i$ else.

By Remark~\ref{rem:qrmg} there are two possibilities, either $c_{i_0}^2=-1$ and $\isoi_\stab$ is an elliptic tail automorphism of order $2$ or $c_{i_0}^2=1$ and $\isoi_\stab$ is the identity. In the first case denote by $\comp$ the elliptic tail of $\stab$ meeting the rest of the curve in the node $P_{i_0}$. In order to show that this is case (ii) of the statement, we have to prove, that $\isoi$ restricted to $\overline{\spini \setminus (\comp \cup E_{i_0})}$ is the identity, where $E_{i_0}$ is the exceptional component of $\spini$ over $P_{i_0}$. Since $\isoi_\stab$ is the identity on $\overline{\stab\setminus\comp}$, $\isoi$ is the identity on every non-exceptional component $\compi\neq\comp$. Let $E_i\neq E_{i_0}$ be an exceptional component meeting $\spini$ in the non-exceptional components $\compi$ and $\compii$. Note that $E_i$ does not meet the elliptic tail $\comp$ and $\isoi$ is the identity on $\compi$ and $\compii$. Hence $\isoii$ is multiplication with $(-1)^{\isoii_{j'}}$ resp. $(-1)^{\isoii_{j''}}$ in all fibres of $\spinii$ over $\compi$ resp. $\compii$ for appropriate $\isoii_{j'},\isoii_{j''}\in\ZZ_2$. In this situation $\isoi$ is the identity on $E_i$ if and only if $\isoii_{j'}=\isoii_{j''}$. 

Since $\iso$ acts as a quasireflection and $E_i\neq E_{i_0}$ we have $\tau_i\mapsto\tau_i$, where $\tau_i$ is the coordinate corresponding to $E_i$. Consider the restriction of the universal deformation $(\calX\rightarrow\CC[3g-3]_\tau,\cspinii,\cspiniii)$ to the one-dimensional locus given by $\tau_k=0$, $k\neq i$. In the underlying one-parameter family of quasistable curves the subcurve $\compi\cup E_i \cup\compii$ of the central fibre $\spini$ is smoothed. $\tau_i\mapsto\tau_i$ means that the automorphism $\iso$ of the central fibre $\spin$ deforms to the nearby curves. This is the case if and only if $\isoii_{j'}=\isoii_{j''}$. Therefore, $\isoi$ is the identity on $\overline{\spini\setminus(\comp\cup E_{i_0})}$ and $\isoii$ is $\pm\identity$ over this subcurve.

{\scriptsize
\[\begindc{0}[1]
\obj(0,30)[]{central fibre}[1] 
\obj(120,30)[]{nearby fibre}[1] 
\obj(0,0)[central]{{\includegraphics{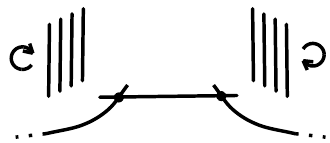}}}[1] 
\obj(120,0)[near]{{\includegraphics{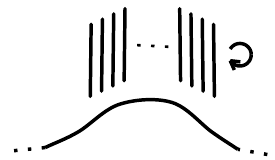}}}[1] 
\obj(-2,2){$E_i$}
\obj(-17,-20){$\compi$}
\obj(20,-20){$\compii$}
\obj(-60,3){$(-1)^{\isoii_{j'}}$}
\obj(65,4){$(-1)^{\isoii_{j''}}$}
\obj(193,7){$(-1)^{\isoii_{j'}}=(-1)^{\isoii_{j''}}$}
\enddc\]}

Consider now the case that $\isoi_\stab$ is the identity, i.e. $\iso$ is an inessential automorphism, which by Remark~\ref{rem:iness} corresponds to an element $(\isoii_j)_j\in\ZZ[V(\Sigma(\spini))]_2$. Let $E_i$ be an exceptional component, $\tau_i$ the corresponding coordinate and denote by $\nonexcomp$ and $\nonexcompi$ the connected components of the non-exceptional subcurve $\nonex$ meeting $E_i$. As above $\tau_i\mapsto\tau_i$ if and only if $\isoii_j=\isoii_{j'}$. Since every non-trivial inessential automorphism has order $2$ we must have $\tau_i\mapsto -\tau_i$ in case $\isoii_j\neq\isoii_{j'}$. Consider the subcurve $\overline{\spini\setminus E_{i_0}}$, where $E_{i_0}$ is the unique exceptional component such that $\tau_{i_0}\mapsto-\tau_{i_0}$, i.e. here $\isoii_j\neq\isoii_{j'}$. Assume that this subcurve is connected. Then there exists a chain of subcurves consisting of connected components of $\nonex$ and exceptional components other than $E_{i_0}$ connecting $\nonexcomp$ and $\nonexcompi$. On the one hand $\isoii_j\neq\isoii_{j'}$, since $\tau_{i_0}\mapsto-\tau_{i_0}$. On the other hand the coordinates corresponding to the exceptional components in the chain are all fixed, hence $\isoii_j=\isoii_{j'}$. Therefore, $\overline{\spini\setminus E_{i_0}}$ has two connected components $\spini_1$ and $\spini_2$ and w.l.o.g. all $\isoii_j$ of connected components $\nonexcomp$ contained in $\spini_1$ resp. $\spini_2$ are equal to $0$ resp. $1$. This means that $\isoii$ is multiplication with $1$ resp. $-1$ in fibres of $\spinii$ over $\spini_1$ resp. $\spini_2$, which is case (i) of the proposition.
\end{proof}
\begin{rem}\label{rem:H}
The above proof also shows that in case (i) the action of $\iso$ is $\tau_{i_0}\mapsto-\tau_{i_0}$ and $\tau_j\mapsto\tau_j$, $j\neq i_0$, if $\tau_{i_0}$ is the coordinate corresponding to the exceptional component $E_{i_0}$, which devides $\spini$ into the two connected components $\spini_1$ and $\spini_2$. Moreover, in case (ii) the action of $\iso$ is $\tau_{i_0}\mapsto\xi_4\tau_{i_0}$ and $\tau_j\mapsto\tau_j$, $j\neq i_0$, where $\xi_4$ is an appropriate root of $-1$ and $E_{i_0}$ the exceptional component connecting the elliptic tail on which $\iso$ acts as elliptic involution to the rest of the curve. 

Therefore, the subgroup of $\paut\spin$ generated by quasireflections consists of diagonal matrices 
{\footnotesize\[
\begin{pmatrix}
d_1&&\\[-1,5ex]
&\!\!\!\!\!\ddots&\\[-1ex]
&&\!\!\!\!\!d_{3g-3}
\end{pmatrix}
\]}where $d_i^4=1$ if $\tau_i$ corresponds to an exceptional component meeting an elliptic tail, $d_i^2=1$ if $\tau_i$ corresponds to a disconnecting exceptional component which does not meet an elliptic tail and $d_i=1$ in all other cases, i.e. if $\tau_i$ corresponds to a non-disconnecting exceptional component or a non-exceptional node or a non-exceptional component.
\end{rem}
\begin{notation}\label{not:part}
We will use the following notation for a spin curve $\spin$ with stable model $\stab$.
\begin{align*}
\ET=\,\ET\spin&=\{P_i\in\sing\stab \text{ is an elliptic tail node}\}\\
\DC=\DC\spin&=\{P_i\in\sing\stab\setminus T\spin \text{ is a disconnecting node}\}\\
\EX=\EX\spin&=\{P_i\in\sing\stab \text{ is a non-disconnecting exceptional node}\}\\
\Delta=\Delta\spin&=\{P_i\in\sing\stab\text{ is a non-exceptional node}\}
\end{align*}
Note that the first three sets form a partition of the set $N=N\spin\subset\sing\stab$ of exceptional nodes and all four sets form a partition of $\sing\stab$. Moreover, every automorphism $\isoi_\stab$ which lifts to $\spin$ fixes this partition. 
\end{notation}
\begin{proof}[Proof of Theorem~\ref{thm:smooth}]
\emph{First step: The two conditions are sufficient.} Let $\spin$ be a spin curve fulfilling the two conditions of the theorem. We have to show that these conditions imply that $\paut\spin$ is generated by elements acting as quasireflections, i.e. that every automorphism $\iso\in\aut\spin$ can be written as a product of elements acting as quasireflections or as the identity. 

\emph{Claim: We may assume w.l.o.g. that $\iso$ is an inessential automorphism.} Let $\iso$ be any automorphism of $\spin$ and $\isoi_\stab$ the induced automorphism of the stable model, in particular $\isoi_\stab$ lifts to $\spin$. By condition (ii) $\isoi_\stab$ can be written as a composition of elliptic tail automorphisms of order $2$, i.e. there exist elliptic tails $\stab_1,\dotsc,\stab_k$ of $\stab$ such that $\isoi_\stab=\iota_1\circ\dotsb\circ\iota_k$, where $\iota_i$ is the elliptic tail automorphism of order $2$ with respect to $\stab_i$. 

Denote by $P_i$ the elliptic tail node connecting $\stab_i$ to the rest of the curve. In case $\stab_i$ is a smooth elliptic tail $\spinii_i^\nu=\nu_{\spini}^*\spinii_{|\stab^\nu_i}$ corresponds to a two-torsion point of $\stab_i^\nu$, where $\nu_{\spini}:\spini^\nu\rightarrow\spini$ is the normalisation, hence $({\iota^\nu_i}_{|\stab^\nu_i})^*\spinii_i^\nu\cong\spinii_i^\nu$. In case $\stab_i$ is a singular elliptic tail, denote by $Q$ the node contained in $\stab_i$. If the node $Q$ is exceptional $\spinii_i^\nu\cong\calO_{\PP[1]}(-1)$ and $({\iota^\nu_i}_{|\stab^\nu_i})^*\spinii_i^\nu\cong\spinii_i^\nu$. If the node $Q$ is non-exceptional $\spinii_i^\nu=\calO_{\PP[1]}$, $({\iota^\nu_i}_{|\stab^\nu_i})^*\spinii_i^\nu\cong\spinii_i^\nu$ and also $({\iota_i}_{|\stab_i})^*\spinii_{|\stab_i}\cong\spinii_{|\stab_i}$. In all cases $\iota_i$ fixes all nodes of $\stab$, ${\iota_i}_{|\overline{\stab\setminus\stab_i}}$ is the identity and we have $\widetilde{\iota_i}^*\widetilde\spinii\cong\widetilde\spinii$. Therefore, every $\iota_i$ lifts to the spin curve $\spin$. 

Abusing notation denote by $(\iota_i,\isoii^{(i)})\in\aut\spin$, $i=1,\dotsc,k$, a lift of $\iota_i\in\aut\stab$ such that $\iota_i$ is the identity on $\overline{\spini\setminus(\comp\cup E_i)}$, where $E_i$ is the exceptional component over $P_i$. Consider the concatenation $(\isoi',\isoii')=\iso\circ(\iota_1,\isoii^{(1)})\circ\dotsb\circ(\iota_k,\isoii^{(k)})$. By construction $\isoi'=\isoi\circ\iota_1\circ\dotsb\circ\iota_k$ in $\aut\spini$ is the identity on \emph{every} non-exceptional component $\comp$ of $\stab$. Therefore, $(\isoi',\isoii')$ is an inessential automorphism. If we prove that every inessential automorphism can be written as a concatenation of (inessential) automorphisms acting as quasireflections or as the identity we are done, since by Proposition~\ref{prop:qrsg} the $(\iota_i,\isoii^{(i)})$ act as quasireflections.

Now assume that $\iso$ is an inessential automorphism. If $\iso$ acts as the identity we are done, so assume that $\iso\neq(\identity_\spini,\pm\identity_\spinii)$. Let $(\isoii_j)_j\in\ZZ[V(\Sigma(\spini))]_2$ be the element corresponding to $\iso$, i.e. $\isoii$ is multiplication with $(-1)^{\isoii_j}$ in every fibre of $\spinii$ over the $j$th connected component $\nonexcomp$ of the non-exceptional subcurve $\nonex$. Recall from the proof of Proposition~\ref{prop:qrsg} that $\iso$ acts as $\tau_i\mapsto-\tau_i$, if $\tau_i$ corresponds to an exceptional component connecting two connected components $\nonexcomp$ and $\nonexcompi$ with $\isoii_j\neq\isoii_{j'}$, and $\tau_i\mapsto\tau_i$ in all other cases. By Remark~\ref{rem:H} this action is a composition of quasireflections in $\paut\spin$ if and only if $\tau_i\mapsto\tau_i$ for all coordinates $\tau_i$ corresponding to components of $\stab$, non-exceptional nodes or non-disconnecting exceptional nodes. 

We already know that $\iso$ acts trivially on coordinates corresponding to components of $\stab$ or non-exceptional nodes. Let $\tau_i$ correspond to a non-disconnecting exceptional node $P_i\in\overline{N}\spin$. This node gives a non-disconnecting edge in the graph $\Sigma(\spini)$. Since the graph $\Sigma(\spini)$ is tree-like, every non-disconnecting edge is a loop. This in turn means that for the corresponding exceptional component of $\spini$ the two connected components $\nonexcomp$ and $\nonexcompi$ coincide, in particular $\isoii_j=\isoii_{j'}$ and $\tau_i\mapsto\tau_i$. Hence $\iso$ is a concatenation of automorphisms acting as quasireflections or as the identity.

\emph{Second step: The two conditions are necessary.} Let $[\spin]\in\sgbar$ be a smooth point. Hence $\paut\spin$ is generated by elements acting as quasireflections. Let $\isoi_\stab$ be an automorphism of the stable model $\stab$ lifting to an automorphism $\iso$ of $\spin$. $\iso$  can be decomposed into a product of elements acting as quasireflections (modulo $(\identity_\spini,\pm\identity_\spinii)$). These induce either the identity or elliptic tail automorphisms of order $2$ on $\stab$. Therefore, $\isoi_\stab$ can be written as a concatenation of elliptic tail automorphisms of order $2$ and condition (ii) is necessary.

Now assume that $\Sigma(\spini)$ is not tree-like. This implies that there exists a cycle $e_1,\dotsc,e_k$ of edges in $\Sigma(\spini)$ such that no $e_i$, $i=1,\dotsc,k$, is a loop. Let $v_1,\dotsc,v_k$ be the vertices of this cycle, i.e. $e_i$ connects the vertices $v_i$ and $v_{i+1}$, where the indices are considered modulo $k$. We may assume that the $v_i$ are pairwise distinct. 

\[\begindc{1}[2]
\obj(0,10)[vk]{$v_k$}[7] 
\obj(10,20)[v1]{$v_1$}[8] 
\obj(20,20)[v2]{$v_2$}[2] 
\obj(30,10)[v3]{$v_3$}[3] 
\obj(20,0)[vi]{$v_j$}[4] 
\obj(10,0)[vi1]{$v_{j+1}$}[6] 
\mor{vk}{v1}{$e_k$}[-1,2] 
\mor{v1}{v2}{$e_1$}[-1,2] 
\mor{v2}{v3}{$e_2$}[-1,2] 
\mor{vi}{vi1}{$e_j$}[-1,2] 
\mor(0,10)(3,7){}[1,2] 
\mor(4,6)(7,3){}[1,2] 
\mor(8,2)(10,0){}[1,2] 
\mor(20,0)(22,2){}[1,2] 
\mor(23,3)(26,6){}[1,2] 
\mor(27,7)(30,10){}[1,2] 
\enddc\]

Denote by $E_i$ the exceptional component corresponding to $e_i$ and by $\nonex_i$ the connected component of $\nonex$ corresponding to $v_i$. Consider the inessential automorphism $\iso$ of $\spin$ which is multiplication with $-1$ over the component $\nonex_1$ and multiplication with $1$ over all other connected components of $\nonex$, i.e. $\isoii_1=1$ and $\isoii_j=0$ else. $\iso$ acts as $\tau_1\mapsto-\tau_1$, $\tau_k\mapsto-\tau_k$ and $\tau_j\mapsto\tau_j$ else. In particular the action is non-trivial on a coordinate corresponding to a non-disconnecting exceptional component. By Remark~\ref{rem:H} such an action is not a product of quasireflections in $\paut\spin$. Therefore, the graph $\Sigma(\spini)$ is tree-like for a smooth point $[\spin]\in\sgbar$.
\end{proof}
\begin{cor}
For $g\geq 4$ the image of the singular locus $\sing\sgbar$ under the forgetful morphism $\pi:\sgbar\rightarrow\mgbar$ is
\[
\pi(\sing\sgbar)=\sing\mgbar\cup\left\{[\stab]\in\mgbar|\Gamma(\stab) \text{ is not tree-like}\right\},
\]
where $\Gamma(\stab)$ is the dual graph of the stable curve $\stab$.
\end{cor}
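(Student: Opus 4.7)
The plan is to establish both inclusions of the claimed equality, using Theorem~\ref{thm:smooth} to detect singularities of $\sgbar$ and Remark~\ref{rem:qrmg} to detect singularities of $\mgbar$. Denote by $H\subseteq\aut\stab$ the subgroup generated by elliptic tail involutions of order $2$, and for each spin curve $\spin$ over $\stab$ let $\Lambda(\spin)=\{\isoi_\stab\in\aut\stab|\isoi_\stab\text{ lifts to }\spin\}$ be its lifting subgroup. A key observation used in both directions is that every elliptic tail involution of order $2$ lifts to every spin curve (as noted in the proof of Theorem~\ref{thm:smooth}), so $H\subseteq\Lambda(\spin)$ always holds.

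For the inclusion $\pi(\sing\sgbar)\subseteq\sing\mgbar\cup\{[\stab]|\Gamma(\stab)\text{ not tree-like}\}$, I take $[\spin]\in\sing\sgbar$ with stable model $\stab$. By Theorem~\ref{thm:smooth}, either $\Sigma(\spini)$ fails to be tree-like, or $\Lambda(\spin)$ fails to be generated by elliptic tail involutions. In the first case, $\Sigma(\spini)$ is obtained from $\Gamma(\stab)$ by contracting all non-exceptional edges, and edge contraction preserves tree-likeness: contracting a loop simply deletes it, while contracting a non-loop creates no new loops because the tree part of a tree-like graph has unique paths and thus no multi-edges between any pair of vertices. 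Hence $\Gamma(\stab)$ itself must fail to be tree-like. In the second case, the inclusion $H\subseteq\Lambda(\spin)$ forces the failure to take the form $\Lambda(\spin)\supsetneq H$, so $\aut\stab\supsetneq H$ and $[\stab]\in\sing\mgbar$ by Remark~\ref{rem:qrmg}.

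For the reverse inclusion, I construct an explicit singular spin curve over each $[\stab]$ in the right-hand side, working with the total blow-up $\spini$ of $\stab$ at $N=\sing\stab$. This choice gives $\Delta=\emptyset$, which is trivially even, so a spin structure on $\spini$ exists by Remark~\ref{rem:even}, specified by a theta characteristic on each smooth component $\compnu$ of $\nonex=\stab^\nu$. If $\Gamma(\stab)$ is not tree-like, the graph $\Sigma(\spini)$ is canonically identified with $\Gamma(\stab)$ (vertices $\leftrightarrow\compnu$, edges $\leftrightarrow$ exceptional components over the nodes, with matching incidences), so every spin curve with this support violates condition~(i) of Theorem~\ref{thm:smooth}. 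If instead $[\stab]\in\sing\mgbar$, I pick $\sigma\in\aut\stab\setminus H$; since $\sigma(N)=N$ automatically for $N=\sing\stab$, it remains to find a spin structure $\widetilde\spinii$ on $\nonex$ with $\widetilde\sigma^*\widetilde\spinii\cong\widetilde\spinii$. Decomposing into $\widetilde\sigma$-orbits of components of $\nonex$, this reduces to producing on each stabilised component a theta characteristic fixed by the corresponding cyclic stabiliser in $\langle\sigma\rangle$.

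The main obstacle is precisely this last step, for which I would invoke the classical fact that a finite cyclic group of automorphisms of a smooth curve always fixes at least one theta characteristic. For cyclic groups of odd order this is immediate from orbit counting (the $2^{2g}$ theta characteristics form a set of cardinality coprime to the group order, hence fixed points exist); for groups of $2$-power order one exploits that the action preserves the Arf-type parity on the torsor of theta characteristics over $J[2]$, following Mumford's analysis. Granting this, an $\widetilde\sigma$-invariant spin structure exists on $\nonex$, so $\sigma$ lifts to the resulting spin curve $\spin$, yielding $\Lambda(\spin)\supsetneq H$ and hence $[\spin]\in\sing\sgbar$ by Theorem~\ref{thm:smooth}.
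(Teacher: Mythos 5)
Your proof follows essentially the same route as the paper: both inclusions are read off from Theorem~\ref{thm:smooth} together with Remark~\ref{rem:qrmg}, the inclusion of $\pi(\sing\sgbar)$ in the right-hand side uses the same observation that $\Sigma(\spini)$ is obtained from $\Gamma(\stab)$ by contracting the edges at non-exceptional nodes, and the reverse inclusion is proved by blowing up all nodes and producing an invariant theta characteristic orbit-by-orbit, exactly as the paper does via Atiyah's theorem. Your explicit observation that every order-two elliptic tail automorphism lifts to every spin curve, so that the lifting subgroup always contains the subgroup $H$ generated by such automorphisms, nicely justifies the paper's unstated step from ``the lifting subgroup is not generated by elliptic tail automorphisms'' to ``$\aut\stab$ is not generated by elliptic tail automorphisms.'' One small caveat: your parenthetical for the odd-order case (``orbit counting from $\gcd(2^{2g},n)=1$'') is not a valid argument for a general cyclic group of odd order---it works only for $p$-groups (e.g.\ $\ZZ/15$ can act on an $8$-element set with orbit sizes $5+3$ and no fixed point)---but since you ultimately invoke Atiyah's theorem, which is also the paper's reference, this does not affect the correctness of your proof.
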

\begin{proof}
``$\subset$'' For $g\geq 4$ let $[\spin]$ be a singular point of $\sgbar$. By Theorem~\ref{thm:smooth} either $\Sigma(\spini)$ is not tree-like or there exists an automorphism $\isoi_\stab\in\aut\stab$ which lifts to $\spin$ and is not a composition of elliptic tail automorphisms of order $2$. In the second case $\aut\stab$ is not generated by elliptic tail automorphisms of order $2$, hence by Remark~\ref{rem:qrmg} $[\stab]$ is a singular point of $\mgbar$. 

In the first case $\Sigma(\spini)$ is not tree-like. Let $\Delta\subset\sing\stab$ be the set of non-exceptional nodes and $N$ the set of exceptional nodes. The restriction of the stable model $\beta:\spini\rightarrow\stab$ to the non-exceptional subcurve $\nonex$ is the partial normalisation of $\stab$ at $N$. Therefore, all irreducible components of $\stab$ which are connected by nodes in $\Delta$ lie in the same connected component $\nonexcomp$ of $\nonex$ while the set of exceptional nodes $N$ is in $1:1$-correspondence to the exceptional components. Hence contracting all edges $e(P)$ in $\Gamma(\stab)$ corresponding to nodes $P\in\Delta$ gives the graph $\Sigma(\spini)$. Since $\Sigma(\spini)$ is not tree-like and comes from $\Gamma(\stab)$ by contracting a subset of edges, $\Gamma(\stab)$ cannot be tree-like either.

``$\supset$'' \emph{First step: If $\stab$ is a stable curve such that $\Gamma(\stab)$ is not tree-like, then $[\stab]\in\pi(\sing\sgbar)$.} Let $\stab$ be a stable curve of genus $g\geq 4$ such that the dual graph $\Gamma(\stab)$ is not tree-like. The set $\Delta=\emptyset\subset\sing\stab$ is an even subset. This implies that there exists a spin curve $\spin$ such that $\beta:\spini\rightarrow\stab$ is the blow up of $\stab$ at $N=\sing\stab\setminus\Delta=\sing\stab$. Since the graph $\Sigma(\spini)$ can be constructed from $\Gamma(\stab)$ by contracting all edges corresponding to non-exceptional nodes, i.e. nodes in $\Delta=\emptyset$, the graphs $\Sigma(\spini)$ and $\Gamma(\stab)$ coincide. In particular $\Sigma(\spini)$ is not tree-like, $[\spin]\in\sgbar$ is a singular point and $[\stab]=\pi([\spin])\in\pi(\sing\sgbar)$.

\emph{Second step: If $\stab$ is a smooth curve and $[\stab]\in\sing M_g$, then $[\stab]\in\pi(\sing\sgbar)$.} Let $\stab$ be a smooth curve of genus $g\geq 4$ such that $[\stab]$ is a singular point of $M_g$. Then there exists a non-trivial automorphism $\isoi_\stab\in\aut\stab$. M.~Atiyah proved in his article~\cite{at1971} that there exists a theta characteristic $\spinii$ on $\stab$ which is fixed by $\isoi_\stab$, i.e. $\isoi_\stab^*\spinii\cong\spinii$. Therefore $\isoi_\stab$ lifts to the spin curve $(\stab,\spinii,\spiniii)$ and $[(\stab,\spinii,\spiniii)]$ is a singular point of $S_g$.

\emph{Third step: If $\stab$ is a singular stable curve such that the dual graph $\Gamma(\stab)$ is tree-like and $[\stab]\in\mgbar$ is a singular point, then $[\stab]\in\pi(\sing\sgbar)$.} Let $\stab$ be such a curve. By Remark~\ref{rem:qrmg} there exists an automorphism $\isoi_\stab\in\aut\stab$ which is not a product of elliptic tail automorphisms of order $2$. If we can find a spin curve $\spin$ with stable model $\stab$ such that $\isoi_\stab$ lifts to $\spin$ the point $[\spin]\in\sgbar$ is singular and $[\stab]\in\pi(\sing\sgbar)$. 

Let $\Delta=\emptyset\subset\sing\stab$ and $\beta:\spini\rightarrow\stab$ the blow up at $N=\sing\stab\setminus\Delta=\sing\stab$. The non-exceptional subcurve $\nonex$ is the normalization of $\stab$, in particular $\nonex$ is a disjoint union $\coprod_j\compnu$ of smooth curves, where the union is taken over all irreducible components $\comp$ of $\stab$ and $\compnu\rightarrow\comp$ is the normalisation. The automorphism $\isoi_\stab$ induces an automorphism $\widetilde\isoi$ on $\nonex$. We want to define a line bundle $\widetilde\spinii$ on $\nonex$ such that $\widetilde\isoi^*\widetilde\spinii\cong\widetilde\spinii$. Consider a component $\stab^\nu_{i_0}$ of $\nonex$ and let $m$ be the smallest number such that $\stab^\nu_{i_0}, \stab^\nu_{i_1}=\widetilde\isoi(\stab^\nu_{i_0}),\dotsc,\stab^\nu_{i_{m-1}}=\widetilde\isoi^{m-1}(\stab^\nu_{i_0})$ are distinct and $\widetilde\isoi^m(\stab^\nu_{i_0})=\stab^\nu_{i_0}$. By Atiyah's result there exists a theta characteristic $\widetilde\spinii_{i_0}$ on $\stab^\nu_{i_0}$ which is fixed by $\left(\widetilde\isoi_{|\stab^\nu_{i_0}}\right)^m$. Fix an isomorphism $\spiniii_{i_0}:\osquare{\widetilde\spinii_{i_0}}\overset{\cong}{\longrightarrow}\omega_{\stab_{i_0}^\nu}$ and let $(\widetilde\spinii_{i_j},\spiniii_{i_j})$ for $j=1,\dotsc,m-1$ be the appropriate pull back of $(\widetilde\spinii_{i_0},\spiniii_{i_0})$ to $\stab^\nu_{i_j}$, i.e. 
\[
\widetilde\spinii_{i_j}=\left({\widetilde\isoi^{m-j}}{}_{|\stab_{i_j}^\nu}\right)^*\widetilde\spinii_{i_0}
\quad\text{and}\quad
\spiniii_{i_j}=\left({\widetilde\isoi^{m-j}}{}_{|\stab_{i_j}^\nu}\right)^*\spiniii_{i_0},
\]
in particular $\widetilde\spinii_{i_j}$ is a theta characteristic on $\stab^\nu_{i_j}$. Let $\widetilde\spinii$ be the line bundle on $\nonex$ which is $\widetilde\spinii_{i_j}$ on $\stab^\nu_{i_j}$ and $\widetilde\spiniii:\osquare{\widetilde\spinii}\overset{\cong}{\longrightarrow}\omega_{\nonex}$ be the isomorphism which is $\spiniii_{i_j}$ on $\stab_{i_j}^\nu$. Then by construction $\widetilde\isoi^*\widetilde\spinii\cong\widetilde\spinii$. 

Let $\spinii$ be a line bundle on $\spini$ which restricts to $\widetilde\spinii$ on $\nonex$ and to $\calO_E(1)$ on every exceptional component $E$. Moreover, extend $\widetilde\spiniii$ by $0$ on $E$ to get a homomorphism $\spiniii:\osquare\spinii\rightarrow\beta^*\omega_\stab$. This gives a spin curve $\spin$ with stable model $\stab$. By Proposition~\ref{prop:aut} the automorphism $\isoi_\stab$ lifts to $\spin$ since $\widetilde\isoi^*\widetilde\spinii\cong\widetilde\spinii$. Therefore, $[\spin]\in\sing\sgbar$ and $[\stab]\in\pi(\sing\sgbar)$.
\end{proof}
\begin{cor}\label{cor:nottreelike}
Let $\stab$ be a stable curve of genus $g\geq 4$ such that $\Gamma(\stab)$ is not tree-like. Then the fibre $\fibre$ of $\pi:\sgbar\rightarrow\mgbar$ over $[\stab]$ contains a singular point of $\sgbar$, i.e. $[\stab]\in\pi(\sing\sgbar)$. If in addition $\aut\stab$ is generated by elliptic tail automorphisms of order $2$, then $\fibre$ contains a smooth point of $\sgbar$.
\end{cor}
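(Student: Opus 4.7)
My plan is to handle the two assertions separately.

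For the first, I would reuse the construction in the first step of the proof of Corollary~2.14: taking $N=\sing\stab$ makes $\Delta=\emptyset$ trivially even, so by Remark~\ref{rem:even} there exists a spin curve $\spin$ with $\beta:\spini\to\stab$ the blow-up at every node. Since no edges of $\Gamma(\stab)$ get contracted we have $\Sigma(\spini)=\Gamma(\stab)$, which is not tree-like by hypothesis, so Theorem~\ref{thm:smooth} forces $[\spin]$ to be singular and $[\stab]=\pi([\spin])\in\pi(\sing\sgbar)$.

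For the second, I would apply Theorem~\ref{thm:smooth} to exhibit a spin curve $\spin$ over $\stab$ satisfying both conditions (i) and (ii). The extra hypothesis makes (ii) automatic for \emph{every} $\spin$ with stable model $\stab$: by the argument in the first step of the proof of Theorem~\ref{thm:smooth}, every elliptic tail automorphism of order~$2$ of $\stab$ lifts to any such $\spin$, because on a smooth elliptic tail the restriction of the spin line bundle is a $2$-torsion point, and on a singular elliptic tail it is $\calO_{\PP[1]}$ or $\calO_{\PP[1]}(-1)$, both of which are preserved by the elliptic involution. Consequently the lifting subgroup of $\aut\stab$ contains every ETA of order~$2$; since by hypothesis these generate $\aut\stab$, the lifting subgroup equals $\aut\stab$ and is itself ETA-generated.

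It therefore remains to choose $N\subseteq\sing\stab$ such that $\Delta=\sing\stab\setminus N$ is even and $\Sigma(\spini)=\Gamma(\stab)/\Delta$ is tree-like. My strategy is to let $\Gamma^*$ denote $\Gamma(\stab)$ with its loop edges removed, pick a spanning tree $T$ of $\Gamma^*$, and enlarge $T$ within $E(\Gamma^*)$ by non-tree edges to an edge set $\Delta^*$ having even degree at every vertex; declaring $\Delta$ to be $\Delta^*$ together with all loops of $\Gamma(\stab)$ (which contribute $2$ to each local degree and so do not affect parity) then gives an even $\Delta$, while $\Delta^*\supseteq T$ guarantees that contracting $\Delta$ collapses $\Gamma^*$ to a single vertex, making $\Sigma(\spini)$ a one-vertex graph with only loops and hence tree-like. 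The main obstacle is the combinatorial existence of the even extension $\Delta^*$: one must use non-tree edges of $\Gamma^*$ to flip the parity at exactly the odd-degree vertices of $T$, and this requires the cycle space of $\Gamma^*$ to reach all those vertices---ensured by $\Gamma^*$ being connected together with the hypothesis that $\Gamma(\stab)$ is not tree-like, which forces $b_1(\Gamma^*)\geq 1$ and so provides a non-empty set of non-tree edges from which to build the required parity correction.
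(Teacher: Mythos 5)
Your handling of the first assertion is the paper's argument verbatim, and your reduction of the second assertion to a combinatorial problem --- produce an even subset $\Delta\subset\sing\stab$ such that $\Sigma(\spini)=\Gamma(\stab)/\Delta$ is tree-like --- is also how the paper proceeds; in particular your observation that condition~(ii) of Theorem~\ref{thm:smooth} holds automatically for every spin curve over $\stab$ is correct and is implicit in the paper. The gap is in the combinatorial step. By Remark~\ref{rem:even}, ``even'' means even local degree at every vertex of $\Gamma(\stab)$ (loops counting twice), i.e., membership in the cycle space over $\ZZ_2$; such a set can never contain a bridge $e$, because summing the local degrees over the vertices on one side of $e$ gives an even number, while $e$ contributes $1$ to that sum and every other edge contributes $0$ or $2$. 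Since every spanning tree of $\Gamma^*$ contains every bridge of $\Gamma^*$, your $\Delta^*\supseteq T$ cannot be even as soon as $\Gamma^*$ has a bridge. Concretely, let $\Gamma(\stab)$ be a triangle $v_1v_2v_3$ with a pendant edge $v_3v_4$ and all four components elliptic: this is a stable curve of genus $5$, $\Gamma(\stab)$ is not tree-like, generically $\aut\stab$ is generated by the elliptic tail involution at $v_4$, and yet no even set contains the edge $v_3v_4$ ($v_4$ would have odd degree). Connectedness of $\Gamma^*$ and $b_1(\Gamma^*)\geq 1$ do not repair this: the parity defect of $T$ sits at a vertex that no non-tree edge reaches.

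The paper avoids this particular example by not collapsing a spanning tree: it contracts the set of all non-loop edges lying on some cycle, which excludes every bridge (in the example this set is the triangle, and the quotient is a single edge, a tree). Be aware, though, that the difficulty you hit is genuine and the paper's construction does not dispose of it either: the set of non-loop edges on cycles need not be even (for two components joined by three nodes, every vertex has degree $3$ in it), and for some dual graphs no even $\Delta$ works at all. For the Petersen graph with ten elliptic components, every even subgraph is a vertex-disjoint union of at most two cycles, so contracting it leaves $b_1\geq 6-2=4$ while creating at most $3$ loops; hence no even $\Delta$ yields a tree-like quotient, and by Theorem~\ref{thm:smooth} every point of the fibre over such a curve is singular. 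So a correct argument must (a) contract nothing outside the bridgeless blocks of $\Gamma^*$ and (b) produce inside each block an even, connected, spanning edge set, and (b) can fail; the second assertion of the corollary therefore needs more than the stated hypotheses, not merely a better choice of $\Delta$.
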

\begin{proof}
Let $\stab$ be a stable curve of genus $g\geq 4$ with $\Gamma(\stab)$ not tree-like. Then $\Delta=\emptyset\subset\sing\stab$ is an even subset and there exists a spin curve $\spin$ with support $\spini$, where $\beta:\spini\rightarrow\stab$ is the blow up of $\stab$ at $N=\sing\stab\setminus\Delta=\sing\stab$. Since $\Sigma(\spini)$ is the contraction of $\Delta$ considered as a subset of the set $E(\Gamma(\stab))$ of edges of $\Gamma(\stab)$, the graphs $\Sigma(\spini)$ and $\Gamma(\stab)$ coincide. In particular $\Sigma(\spini)$ is not tree-like and $[\spin]\in\sgbar$ is singular. 

In case $\aut\stab$ is generated by elliptic tail automorphisms a point $[\spin]\in\fibre$ is smooth if and only if $\Sigma(\spini)$ is tree-like. Consider the following subset $\Delta\subset E(\Gamma(\stab))$. An edge $e(P)$ belongs to $\Delta$ if and only if there exists a cycle of edges in $\Gamma(\stab)$ which contains $e(P)$ and $e(P)$ is not a loop. Then $\Delta$ is an even subset and there exists a spin curve $\spin$ whose support $\spini$ is the blow-up of $\stab$ at $N=\sing\stab\setminus\Delta$. The graph $\Sigma(\spini)$ is then obtained by contracting all edges of $\Gamma(\stab)$ contained in $\Delta$, i.e. by contracting all cycles of edges in $\Gamma(\stab)$ which are not loops. The resulting graph is tree-like and $[\spin]\in\sgbar$ is smooth. Note that in the same way any even $\Delta'\supset\Delta$ gives rise to smooth points of $\sgbar$. 
\end{proof}
\begin{rem}
The question which automorphisms $\isoi_\stab$ of $\stab$ lift to a spin curve $\spin$ with stable model $\stab$ is difficult, even in the case of a smooth curve $\stab$. By Atiyah's result for every automorphism $\isoi_\stab\in\aut\stab$ of a smooth curve $\stab$ there exists at least one theta characteristic $\spinii$ on $\stab$ such that $\isoi_\stab$ lifts to the spin curve $(\stab,\spinii,\spiniii)$. Moreover, S.~Kallel and D.~Sjerve show in~\cite{kasj2006} that an automorphism $\isoi_\stab\in\aut\stab$, where $\stab$ is smooth, lifts to \emph{every} theta characteristic $\spinii$ on $\stab$ if and only if $\stab$ is hyperelliptic and $\isoi_\stab$ is the hyperelliptic involution. Therefore, for every automorphism $\isoi_\stab$ which is not a hyerelliptic involution there exists at least one theta characteristic to which $\isoi_\stab$ lifts and at least one to which it does not lift. It seems to be an interesting question, which subgroups of $\aut\stab$ actually arise as the stabiliser of a theta characteristic on the smooth curve $\stab$. More generally, the question would be, which subgroups of the automorphism group of a stable curve $\stab$ do arise as the image of $\aut\spin\rightarrow\aut\stab$, where $\spin$ is any spin curve with stable model $\stab$.
\end{rem}

\section{Canonical singularities of $\sgbar$}
We will prove the following characterisation of the locus of non-canonical singularities of $\sgbar$.
\begin{thm}\label{thm:cansing}
Let $g\geq 4$. A point $[\spin]\in\sgbar$ is a non-canonical singularity if and only if the stable model $\stab$ of $\spini$ has a smooth elliptic tail $\comp$ with $j$-invariant $0$ and the theta characteristic $L_j^\nu=\nu_\spini^*\spinii_{|\compnu}$ on the elliptic tail is trivial, where $\nu_\spini:\spini^\nu\rightarrow\spini$ is the normalisation.
\end{thm}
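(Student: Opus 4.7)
The plan is to apply the \rsbt{} criterion to the local model $\sgbar \cong \CC[3g-3]_\tau/\paut\spin$ of Remark~\ref{rem:triv}. Let $H \subset \paut\spin$ denote the subgroup generated by quasireflections, whose action on the $\tau_i$ coordinates is pinned down in Remark~\ref{rem:H}. By Chevalley-Shephard-Todd, the quotient $V/H$ is smooth and identified with $\CC[3g-3]$ via the $H$-invariant coordinates $y_i := \tau_i^4$ at elliptic-tail exceptional nodes, $y_i := \tau_i^2$ at other disconnecting exceptional nodes, and $y_i := \tau_i$ elsewhere. Tai's criterion then says that $[\spin]$ is a canonical singularity iff every non-identity $\bar\iso \in \paut\spin/H$ has age $\geq 1$ acting on the $y_i$'s. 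Crucially, if a lift $\iso$ acts on $\tau_{i_0}$ as a square root of the eigenvalue $e^{2\pi i\alpha}$ of $\isoi_\stab$ on $t_{i_0}$, then it acts on the invariant $y_{i_0}=\tau_{i_0}^4$ by $e^{2\pi i\cdot 2\alpha}$, so the age contribution at an elliptic-tail exceptional coordinate is $\{2\alpha\}$. This ``doubling'' is the essential difference between the age formulas for $\sgbar$ and $\mgbar$.

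Next I would observe, using Remark~\ref{rem:iness}, that every inessential automorphism is a composition of Proposition~\ref{prop:qrsg}(i) quasireflections and so lies in $H$; therefore non-canonical contributions can only come from classes $\bar\iso$ whose induced $\isoi_\stab\in\aut\stab$ is non-trivial. Since the weights of $\iso$ and $\isoi_\stab$ agree on the component and non-exceptional node coordinates, the Harris-Mumford analysis of $\mgbar$ (cf.\ Remark~\ref{rem:qrmg}) restricts the candidate automorphisms with age $<1$ (modulo the elliptic tail involutions, which are quasireflections) to elliptic tail automorphisms of order $n\in\{3,4,6\}$ on smooth elliptic tails of $j\in\{0,1728\}$. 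By Proposition~\ref{prop:aut} such an automorphism lifts to $\spin$ iff it fixes $L|_\comp$; on a smooth elliptic tail only $\calO_\comp$ is fixed by any order-$3$ or $6$ automorphism at $j=0$, while at $j=1728$ the order-$4$ automorphism additionally fixes one non-trivial $2$-torsion theta characteristic.

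The core of the argument is then the explicit age computation. Writing $w_{\mathrm{mod}}$ and $w_{\mathrm{node}}$ for the weights of the elliptic tail automorphism on the one-dimensional modular direction $H^1(\compnu,T_{\compnu}(D_j))$ and on the node coordinate $t_{i_0}$, the contribution to the age of $\bar\iso$ from these two coordinates is $\{w_{\mathrm{mod}}\} + \{2w_{\mathrm{node}}\}$. A direct Weierstrass calculation gives: at $j=0$ some generator of $\mathrm{Aut}(\comp,P)$ of order~$6$ has $(w_{\mathrm{mod}},w_{\mathrm{node}})=(\tfrac{1}{3},\tfrac{1}{6})$, hence age $\tfrac{1}{3}+\tfrac{1}{3}=\tfrac{2}{3}<1$; at $j=1728$ the order-$4$ automorphism has weights $(\tfrac{1}{2},\tfrac{1}{4})$, hence age $\tfrac{1}{2}+\tfrac{1}{2}=1$. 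Combined with the lifting condition and the corresponding checks on powers (the order-$3$ power at $j=0$ requires the same triviality of $L|_\comp$ and likewise gives sub-unit age), this yields exactly the theorem: a non-canonical singularity occurs iff $\stab$ has a smooth $j=0$ elliptic tail with trivial theta characteristic on the tail.

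The principal obstacle will be the arithmetic coincidence at $j=1728$: although the order-$4$ automorphism produces a non-canonical singularity on $\mgbar$ (age $\tfrac{1}{2}+\tfrac{1}{4}=\tfrac{3}{4}$), the combination of the square-root base change $\tau_{i_0}^2=t_{i_0}$ with the fourth-power invariant $y_{i_0}=\tau_{i_0}^4$ pushes the age on $\sgbar$ to exactly~$1$, placing it at the canonical threshold. A secondary check is that no composition of elliptic tail automorphisms across several tails or involving singular tails (whose pointed automorphism group is just the elliptic involution, a quasireflection) produces a fresh sub-unit age value; this follows because the actions on distinct tail coordinates are diagonal and independent, so ages of products simply add.
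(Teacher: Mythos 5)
Your setup (the Prill reduction to $u$-coordinates with $u_i=\tau_i^4,\tau_i^2,\tau_i$, the ``doubling'' at elliptic tail nodes, and the explicit age computations $\tfrac13+\tfrac13=\tfrac23$ at $j=0$ and $\tfrac12+\tfrac12=1$ at $j=1728$) matches the paper, and your if-direction is essentially the paper's. The only-if direction, however, has a genuine gap: you reduce the list of candidate automorphisms of age $<1$ to elliptic tail automorphisms by appealing to the Harris--Mumford classification on $\mgbar$, on the grounds that ``the weights of $\iso$ and $\isoi_\stab$ agree on the component and non-exceptional node coordinates.'' But they do \emph{not} agree at exceptional nodes that are not elliptic tail nodes: there $\tau_i^2=t_i$ and $u_i=\tau_i$ or $\tau_i^2$, so eigenvalues get square-rooted rather than doubled, and the contribution to the age can be strictly \emph{smaller} than on $\mgbar$ (e.g.\ $\tfrac14$ instead of $\tfrac12$ at a non-disconnecting exceptional node where $t_i\mapsto -t_i$). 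Consequently an automorphism whose age on $\mgbar$ is $\geq 1$ could a priori have age $<1$ on $\sgbar$, and the HM list cannot simply be imported. This is why the paper redoes the entire classification: it first deforms to a ``singularity reduced'' pair (to kill cycles of non-disconnecting nodes where the $\tau$-action has eigenvalue $-1$ while the $t$-action is trivial), then proves that all nodes and all components are fixed, classifies the possible components (identity, elliptic tail, elliptic ladder, hyperelliptic tail) with their $u$-level weights, and excludes the hyperelliptic tail and elliptic ladder cases by delicate bookkeeping of exactly these halved contributions (e.g.\ $\tfrac12+\tfrac14+\tfrac14=1$ for the order-$2$ elliptic ladder). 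None of this is covered by your proposal.

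A secondary error: your claim that every inessential automorphism is a product of the quasireflections of Proposition~\ref{prop:qrsg}(i), hence lies in $H$, is false. Precisely when $\Sigma(\spini)$ is not tree-like there are inessential automorphisms acting as $-1$ on two coordinates attached to non-disconnecting exceptional components (this is the content of the second step of the proof of Theorem~\ref{thm:smooth}), and these are not products of quasireflections. Their age is $\tfrac12+\tfrac12=1$, so the conclusion that they do not produce non-canonical singularities survives, but for a different reason than the one you give; and in any case you must also control products of such inessential automorphisms with essential ones, which again forces the full case analysis rather than a reduction to $\aut\stab$.
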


For the proof we would like to use the \rsbt{} criterion. Consider a quotient singularity $\CC[m]/G$ with $G\subset\GL(\CC[m])$ finite. 
\begin{defn}
Let $M\in G$ be a non-trivial element of order $n$ and $\zeta$ any primitive $n$th root of $1$. Then $M$ is diagonalisable as
\[
M\sim{\footnotesize\begin{pmatrix}
\zeta^{a_1}&&\\[-1ex]
&\!\!\!\!\!\!\ddots&\\[-1ex]
&&\!\!\!\zeta^{a_m}\!\!
\end{pmatrix}}
\] 
with $0\leq a_j < n$. The \emph{\rsbt{} sum of $M$ with respect to $\zeta$} is then
\[
\RT[m].
\]
\end{defn}
\begin{rem}
Note that a quasireflection $M$ of order $n$ diagonalises as
\[
M\sim{\footnotesize\begin{pmatrix}
\zeta^{a_1}\!\!&&&\\[-0.5ex]
&\!\!\!\!1&&\\[-1ex]
&&\!\!\!\!\ddots&\\[-1ex]
&&&\!\!\!\!1
\end{pmatrix}}
\] 
for a primitive $n$th root $\zeta$ of $1$, hence $1\leq a_1<n$, $a_2=\dotsb=a_m=0$ and
\[
0<\RT[m]=\frac{a_1}{n}<1.
\]
\end{rem}
\begin{thm}\emph{(\rsbt{} criterion, \cite{re1980}, \cite{ta1982})} Let $G\subset\GL(\CC[m])$ be a finite group without quasireflections. $\CC[m]/G$ has canonical singularities if and only if for every non-trivial element $M\in G$ and every primitive $\ord M$-th root $\zeta$ of $1$ the \rsbt{} sum of $M$ with respect to $\zeta$ fulfils the \rsbt{} inequality
\[
\RT[m]\geq 1.
\] 
\end{thm}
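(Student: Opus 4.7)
The plan is to prove the Reid-Shepherd-Barron-Tai criterion by reducing the general situation to cyclic subgroups and then applying toric geometry to describe discrepancies explicitly.

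First I would set up and reduce to the cyclic case. Because $G\subset\GL(\CC[m])$ contains no quasireflection, the fixed locus of any non-trivial $M\in G$ has codimension at least $2$ in $\CC[m]$, so the quotient morphism $p\colon\CC[m]\to\CC[m]/G$ is étale in codimension one. Consequently $\CC[m]/G$ is normal and $\QQ$-Gorenstein, and being canonical is a local analytic property of the completed local ring $\CC[[x_1,\dotsc,x_m]]^G$ at the origin. For each non-trivial $M\in G$ of order $n$, the cyclic subgroup $\langle M\rangle$ yields an intermediate quotient $\CC[m]/\langle M\rangle\to\CC[m]/G$ which is again étale in codimension one, and by Reid's covering trick discrepancies pull back cleanly under such covers. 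Both directions of the criterion therefore reduce to the cyclic case $G=\langle M\rangle$ with $M$ diagonalised as $\text{diag}(\zeta^{a_1},\dotsc,\zeta^{a_m})$.

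For this cyclic situation I would invoke the toric description. The quotient $\CC[m]/\langle M\rangle$ is the affine toric variety attached to the positive orthant $\sigma\subset\RR[m]$ in the overlattice
\[
N=\ZZ[m]+\ZZ\cdot\tfrac{1}{n}(a_1,\dotsc,a_m).
\]
Every toric resolution arises from a simplicial refinement of $\sigma$, and any ray generated by a primitive lattice vector $v=(v_1,\dotsc,v_m)\in N\cap\sigma$ introduces an exceptional divisor with discrepancy $\sum v_j-1$. A direct count identifies the lattice points of $N$ lying in the half-open unit cube $[0,1)^m$ as the origin together with the vectors $v_k=\tfrac{1}{n}(ka_1\bmod n,\dotsc,ka_m\bmod n)$ for $k=1,\dotsc,n-1$; their coordinate sums are precisely the \rsbt{} sums of $M^k$ with respect to $\zeta$. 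Since any such $v_k$ can be made a ray of some refinement, the cyclic quotient has canonical singularities if and only if $\sum v_j\ge 1$ holds for every $v_k$ -- which is exactly the \rsbt{} inequality applied to $M^k$. Letting $k$ range over $\{1,\dotsc,n-1\}$ recovers every non-trivial element of $\langle M\rangle$, and changing $\zeta$ to another primitive $\ord M$-th root corresponds to passing from $M$ to an appropriate power, so all choices of primitive root are accounted for.

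The technical heart -- and the main obstacle -- is the combination of the toric discrepancy formula with the covering-trick reduction. One must verify that discrepancies on any log resolution of $\CC[m]/G$ can be read off from primitive lattice vectors in $N$, i.e. that toric test vectors are in fact enough, and that the intermediate cover $\CC[m]/\langle M\rangle\to\CC[m]/G$, being étale in codimension one, both preserves and reflects the canonical condition. Once these are in place, the ``only if'' direction follows by exhibiting a weighted blow-up along a ray $v_k$ with $\sum v_j<1$ which produces negative discrepancy, while the ``if'' direction is handled by inductively refining the toric fan, using the \rsbt{} inequality at each stage to guarantee that every newly introduced primitive vector satisfies $\sum v_j\ge 1$. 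These two inputs constitute the real content of the Reid and Tai papers cited in the statement.
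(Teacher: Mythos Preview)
The paper does not prove this theorem at all: it is stated with the citations \cite{re1980} and \cite{ta1982} and used as a black box throughout Section~3. There is therefore no proof in the paper for your proposal to be compared against.

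That said, your outline is the standard route to the criterion as found in the cited references and in later expositions, so it is a reasonable sketch. One point deserves more care than you give it: the reduction from $G$ to cyclic subgroups is not a symmetric ``covering trick'' in both directions. The map $\CC[m]/\langle M\rangle\to\CC[m]/G$ is finite and \'etale in codimension one, and under such a cover the canonical property passes \emph{down} but not automatically \emph{up}; equivalently, a negative discrepancy on the cyclic cover pushes forward to a negative discrepancy on $\CC[m]/G$, which handles the ``only if'' direction, but for the ``if'' direction one cannot simply say that each cyclic intermediate quotient being canonical forces $\CC[m]/G$ to be canonical. The usual argument for the ``if'' direction instead works directly with a resolution of $\CC[m]/G$, pulls the canonical form back to $\CC[m]$, and bounds the vanishing order along each exceptional divisor by reducing to the cyclic inertia group of a general point of that divisor (this is where Tai's age computation enters). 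Your last paragraph gestures at this (``toric test vectors are in fact enough''), but as written the sentence ``Both directions of the criterion therefore reduce to the cyclic case'' overstates what the covering trick alone delivers.
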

Let $\spin$ be a spin curve of genus $g\geq 4$. By Proposition~\ref{prop:qrsg} $\paut\spin$ contains a quasireflection if and only if the stable model $\stab$ of $\spini$ is contained in some boundary divisor $\Delta_i$, $i=1,\dotsc,[\frac g 2]$, i.e. there exists a disconnecting node $P\in\sing\stab$ such that the partial normalisation of $\stab$ at $P$ has two connected components of genera $i$ and $g-i$ respectively. Therefore, we cannot apply the \rsbt{} criterion to the quotient $\CC[3g-3]_\tau/\paut\spin$ for arbitrary spin curves. 

We want to use the following result of D.~Prill~\cite{pr1967}.
\begin{prop}
Let $V=\CC[m]$ and $G\subset\GL(V)$ be a finite group. Then the subgroup $H\subset G$ generated by the quasireflections in $G$ is a normal subgroup of $G$, there exists an isomorphism $V/H\overset{\cong}{\longrightarrow}W=\CC[m]$ and a finite group $K\subset\GL(W)$ containing no quasireflections such that the following diagram commutes.
\[\begindc{0}[1]
\obj(0,0)[ul]{$V/G$}[1] 
\obj(0,40)[ol]{$V$}[1] 
\obj(80,0)[um]{$(V/H)/(G/H)$}[1] 
\obj(80,40)[om]{$V/H$}[1] 
\obj(160,0)[ur]{$W/K$}[1] 
\obj(160,40)[or]{$W$}[1] 
\mor{ol}{ul}[10,10]{$$}[1,0] 
\mor{om}{um}[10,10]{$$}[1,0] 
\mor{or}{ur}[10,10]{$$}[1,0] 
\mor{ol}{om}[10,10]{$$}[1,0] 
\mor{om}{or}[10,10]{\footnotesize$\cong$}[1,0] 
\mor{ul}{um}[10,10]{\footnotesize$\cong$}[1,0] 
\mor{um}{ur}[10,10]{\footnotesize$\cong$}[1,0] 
\enddc\]
\end{prop}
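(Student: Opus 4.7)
The plan is to break the proof into four steps --- normality of $H$, smoothness of $V/H$, linearisation of the induced action, and absence of quasireflections in $K$ --- only the last being serious. For normality, conjugation preserves spectrum, so any $g\in G$ sends quasireflections to quasireflections; hence $gHg^{-1}\subseteq H$, and equality follows by finiteness. The isomorphism $V/H\cong W:=\CC[m]$ is the Chevalley-Shephard-Todd theorem applied to the complex reflection group $H$: its ring of polynomial invariants on $V$ is itself a polynomial algebra in $m$ generators, so $V/H\cong\CC[m]$. Since $G$ fixes $0\in V$, the induced action of $G/H$ fixes $[0]\in W$; as $W$ is smooth there, Cartan's linearisation lemma for finite group actions on germs of complex manifolds produces analytic coordinates at $[0]$ in which $G/H$ acts linearly, yielding the embedding $K:=G/H\hookrightarrow\GL(W)$.

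The decisive step is to show that no $\bar g\in K\setminus\{1\}$ acts on $W$ as a quasireflection. Suppose the contrary, with fixed hyperplane $F\subset W$, and let $\pi:V\to W$ denote the quotient by $H$ and $D:=\pi^{-1}(F)$, a hypersurface in $V$. Choose a generic $p\in D$ lying outside the ramification locus of $\pi$ and outside every proper fixed locus of a non-identity element of $H$; then $\pi$ is \'etale at $p$, the $H$-stabiliser of $p$ is trivial, and $D$ is smooth near $p$ (locally biholomorphic to $F$ via $\pi$). For any lift $g\in G$ of $\bar g$, $\pi(g\cdot p)=\bar g\cdot\pi(p)=\pi(p)$, so $g\cdot p=h\cdot p$ for a unique $h\in H$. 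Applying the same to nearby $q\in D$ yields $g\cdot q=h_q\cdot q$ with $h_q\in H$, and by continuity $h_q=h$ for all $q\in D$ close to $p$. Therefore $h^{-1}g$ pointwise fixes a neighbourhood of $p$ in $D$, so its fixed locus in $V$ contains a hypersurface. Hence $h^{-1}g$ is either the identity or a quasireflection; in either case $h^{-1}g\in H$, forcing $g\in H$ and $\bar g=1$, a contradiction.

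The three commuting squares are then formal: one factors the quotient $V\to V/G$ as $V\to V/H\to (V/H)/(G/H)$, uses the canonical identification $(V/H)/(G/H)\cong V/G$ for the left pair, and composes with the isomorphism $V/H\cong W$ (which descends to $(V/H)/(G/H)\cong W/K$) for the right pair. The main obstacle I foresee is precisely the hypersurface-preservation argument in the decisive step: it is crucial that the \'etale property of $\pi$ at the generic point $p\in D$ allows one to upgrade the set-theoretic invariance of $F$ under $\bar g$ to pointwise invariance of the local hypersurface $D$ under a specific lift $h^{-1}g\in G$, since this is what forces $h^{-1}g$ to be a quasireflection (or the identity) of $G$ and thereby to lie in $H$.
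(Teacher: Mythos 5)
Your overall strategy --- normality of $H$ by conjugation-invariance of the spectrum, Chevalley--Shephard--Todd for $V/H\cong\CC[m]$, Cartan linearisation of $K=G/H$, and then a contradiction if $K$ contains a quasireflection --- is the standard route (the paper itself cites Prill and gives no proof, so there is nothing to compare against). The formal identifications giving the three commuting squares are also fine.

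There is, however, a genuine gap in the decisive step, exactly at the place you flag as ``crucial.'' You choose a generic $p\in D=\pi^{-1}(F)$ \emph{outside the ramification locus} $R$ of $\pi:V\to V/H$, so that $\mathrm{Stab}_H(p)=1$ and the $h\in H$ with $g\cdot p=h\cdot p$ is unique; uniqueness is what drives the continuity argument. But such a $p$ need not exist: if the linear hyperplane $F=\mathrm{Fix}(\bar g)$ happens to be a branch divisor of $\pi$, then $D\subseteq R$ and \emph{every} point of $D$ has a nontrivial $H$-stabiliser. This does occur: for $H=\langle\mathrm{diag}(-1,1,\dotsc,1)\rangle$ the map $\pi(x_1,x_2,\dotsc)=(x_1^2,x_2,\dotsc)$ has the hyperplane $\{w_1=0\}$ as branch divisor, and nothing prevents a hypothetical quasireflection $\bar g\in K$ from having precisely this hyperplane as its fixed locus. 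Since the proof is by contradiction and must rule out every candidate $\bar g$, the case $D\subseteq R$ must be addressed.

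The fix is both cheaper and more robust than the \'etale-plus-continuity route. Take any irreducible component $D_0$ of $D$ and a generic $p\in D_0$. For each $\gamma\in G$, the set $\mathrm{Fix}(\gamma)\cap D_0$ is either all of $D_0$ or a proper closed subset (as $\mathrm{Fix}(\gamma)$ is linear and $D_0$ is irreducible); choosing $p$ off the finitely many proper such subsets gives $\mathrm{Stab}_G(p)=\{\gamma\in G:\gamma|_{D_0}=\mathrm{id}\}$. Any such $\gamma$ has $\mathrm{Fix}(\gamma)\supseteq D_0$, a hypersurface, so $\gamma$ is the identity or a quasireflection; in either case $\gamma\in H$, hence $\mathrm{Stab}_G(p)\subseteq H$. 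Now for a lift $g$ of $\bar g$, $\pi(g\cdot p)=\bar g\cdot\pi(p)=\pi(p)$ gives $g\cdot p=h\cdot p$ for \emph{some} $h\in H$ (uniqueness not needed), so $h^{-1}g\in\mathrm{Stab}_G(p)\subseteq H$ and $g\in H$, contradicting $\bar g\neq 1$. This works whether or not $D_0$ lies in the ramification locus and renders the nearby-point continuity argument unnecessary.
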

In our case $V=\CC[3g-3]_\tau$ and $H$ is generated by the quasireflections described in Proposition~\ref{prop:qrsg}. Therefore, the isomorphism $\CC[3g-3]_\tau/H\overset{\cong}{\longrightarrow}\CC[3g-3]_u$ is induced by the coordinate change 
\[
u_j=\begin{cases}
\tau_j^4 & P_j \in\ET=\ET\spin\\
\tau_j^2 & P_j \in\DC=\DC\spin\\
\tau_j & \text{else}
\end{cases}
\] 
Let $\iso$ be an automorphism of $\spin$ and $\isoi_\stab$ the induced automorphism of the stable model. Then $\isoi_\stab$ fixes the set $\ET$ of elliptic tail nodes, hence if $P_{j}$ is an elliptic tail node then also $P_{j'}=\isoi_\stab(P_j)\in\ET$. There exist nonzero scalars $c_j$ and $\bar{c}_j$ with $c_j^2=\bar{c}_j$ such that $t_j\mapsto\bar{c}_j t_{j'}$, $\tau_j\mapsto c_j \tau_{j'}$ and hence $u_j=\tau_j^4\mapsto c_j^4\tau_j^4=\bar{c}_j^2u_j$. If $P_j\in\DC$ its image $P_{j'}=\isoi_\stab(P_j)$ also lies in $\DC$, $t_j\mapsto\bar{c}_j t_{j'}$, $\tau_j\mapsto c_j \tau_{j'}$ where $c_j^2=\bar{c}_j$ and $u_j=\tau_j^2\mapsto c_j^2\tau_j^2=\bar{c}_ju_j$. If $P_j\in\EX$, i.e., $P_j$ is an exceptional non-disconnecting node, its image $P_{j'}=\isoi_\stab(P_j)$ also lies in $\EX$, $t_j\mapsto\bar{c}_j t_{j'}$, $\tau_j\mapsto c_j \tau_{j'}$ where $c_j^2=\bar{c}_j$ and $u_j=\tau_j\mapsto c_j\tau_j=c_ju_j$. If the coordinate $t_j$ corresponds to a non-exceptional node or a component we have $t_j=\tau_j=u_j$ and the action of $\isoi_\stab$ on $t_j$ and the actions of $\iso$ on $\tau_j$ and $u_j$ coincide. By Prill's result no element of $\aut\spin$ acts as a quasireflection on $\CC[3g-3]_u$ and  the \rsbt{} criterion applies to the quotient $\CC[3g-3]_u/\aut\spin$.
\begin{proof}[Proof of the if-part of Theorem~\ref{thm:cansing}]
Let $\spin$ be a spin curve of genus $g\geq 4$ whose stable model $\stab$ has a smooth elliptic tail $\stab_1$ with $j$-invariant $0$ and the theta characteristic $L_1^\nu=\nu_\spini^*\spinii_{|\stab_1^\nu}$ on the elliptic tail is trivial, where $\nu_\spini:\spini^\nu\rightarrow\spini$ is the normalisation. We have to prove, that there exists an automorphism $\iso$ of $\spin$ whose action on $\CC[3g-3]_u$ is non-trivial and has \rsbt{} sum less than $1$ for some appropriate root $\zeta$ of $1$.

Denote by $\isoi_\stab\in\aut\stab$ one of the two elliptic tail automorphisms of order $3$ with respect to $\stab_1$. It follows from the proof of Theorem~2 in~\cite{hamu1982} (see page 40) that $\isoi_\stab$ acts as $t_1\mapsto\zeta_3 t_1$, $t_2\mapsto\zeta_3^2 t_2$, $t_i\mapsto t_i$ else on $\CC[3g-3]_t$ for an appropriate primitive third root $\zeta_3$ of $1$, where $t_1$ is the coordinate corresponding to the elliptic tail node $P_1$ connecting $\stab_1$ to the rest of the curve and $t_2$ corresponds to the elliptic tail $\stab_1$. Since the theta characteristic $\spinii_1^\nu$ is trivial the automorphism ${\isoi_\stab^\nu}_{|\stab^\nu_1}$ fixes $\spinii_1^\nu=\calO_{\stab_1^\nu}$. Moreover, $\isoi_\stab$ is trivial on every other component $\comp$, hence $\widetilde\isoi^*\widetilde\spinii\cong\widetilde\spinii$, where $\widetilde\isoi:\nonex\overset{\cong}{\longrightarrow}\nonex$ is the unique lift of $\isoi_\stab$ to the non-exceptional subcurve $\nonex$ and $\widetilde\spinii=\spinii_{|\nonex}$. Proposition~\ref{prop:aut} then implies that $\isoi_\stab$ lifts to the spin curve $\spin$. 

Let $\isoii_1:\big({\isoi^\nu_\stab}_{|\stab_1^\nu}\big)^*\calO_{\stab_1^\nu}\rightarrow\calO_{\stab_1^\nu}$ be one of the two isomorphisms compatible with the isomorphism $\spiniii_1:\osquare{\spinii_1^\nu}=\calO_{\stab_1^\nu}^{\otimes 2}\rightarrow\omega_{\stab_1^\nu}=\calO_{\stab_1^\nu}$, i.e. such that
\[\begindc{0}[1]
\obj(0,0)[ul]{$\big({\isoi^\nu_\stab}_{|\stab_1^\nu}\big)^*\omega_{\stab_1^\nu}$}[1] 
\obj(100,0)[ur]{$\omega_{\stab_1^\nu}$}[1] 
\obj(0,45)[ol]{$\big({\isoi^\nu_\stab}_{|\stab_1^\nu}\big)^*\calO_{\stab_1^\nu}^{\otimes 2}$}[1] 
\obj(100,45)[or]{$\calO_{\stab_1^\nu}^{\otimes 2}$}[1] 
\mor{ul}{ur}[10,10]{\footnotesize$$}[1,0] 
\mor{ol}{or}[10,10]{\footnotesize$\isoii_{\stab_1^\nu}^{\otimes 2}$}[1,0] 
\mor{ol}{ul}[10,10]{\footnotesize$\big({\isoi^\nu_\stab}_{|\stab_1^\nu}\big)^*\spiniii_1$}[1,0] 
\mor{or}{ur}[10,10]{\footnotesize$\spiniii_1$}[1,0] 
\enddc\]
commutes. For every connected component $\nonexcomp\neq\stab_1$ the restriction $\widetilde\isoi_{|\nonexcomp}$ is the identity. Let $\isoii_j:\big(\widetilde\isoi_{|\nonexcomp}\big)^*\spinii_{|\nonexcomp}=\spinii_{|\nonexcomp}\rightarrow\spinii_{|\nonexcomp}$ be the identity. The isomorphisms $\isoii_j$ and $\isoii_1$ give an isomorphism $\widetilde\isoii:\widetilde\isoi^*\widetilde\spinii\rightarrow\widetilde\spinii$ compatible with $\widetilde\spiniii:\osquare{\widetilde\spinii}\rightarrow\omega_{\nonex}$. Hence by Proposition~\ref{prop:aut} there is a unique automorphism $\iso$ extending $(\widetilde\isoi,\widetilde\isoii)$.

The action of $\iso\in\aut\spin$ on $\CC[3g-3]_\tau$ is then the following. $\tau_1\mapsto\zeta_6\tau_1$, where $\zeta_6$ is an appropriate root of $\zeta_3$ (depending on the choice of $\isoii_1$), since $t_i\mapsto\zeta_3 t_i$ and $\tau_1^2=t_1$. For all other exceptional nodes $t_i\mapsto t_i$ and if $E_i$ is the corresponding exceptional component with incident connected components $\nonexcomp$ and $\nonexcompi$ of $\nonex$ then $\isoii$ is the identity on fibres of $\spinii$ over these components, hence $\tau_i\mapsto\tau_i$. For all other coordinates the action of $\iso$ on $\tau_i$ is the same as that of $\isoi_\stab$ on $t_i$. In particular $\tau_2\mapsto\zeta_3^2\tau_2$ and $\tau_i\mapsto\tau_i$ else. For the $u$-coordinates this implies 
\begin{align*}
u_1=\tau_1^4=t_1^2&\mapsto\zeta_3^2 u_1\\
u_2=\tau_2=t_2&\mapsto\zeta_3^2 u_2\\
u_i&\mapsto u_i \qquad\text{ else}
\end{align*}
The \rsbt{} sum of the action of $\iso$ on $\CC[3g-3]_u$ with respect to the primitive third root $\zeta_3^2$ of $1$ is then
\[
\frac 13(1+1+0+\dotsb+0)=\frac 23<1
\]
and by the \rsbt{} criterion $\CC[3g-3]_u/\aut\spin$ has a non-canonical singularity at $0$. Therefore, $[\spin]\in\sgbar$ is a non-canonical singularity. 
\end{proof}
We will now prove the ``only if'' part of Theorem~\ref{thm:cansing}. Let $[\spin]\in\sgbar$ be a non-canonical singularity. The \rsbt{} criterion yields the existence of an automorphism $\iso\in\aut\spin$ which acts non-trivially on $\CC[3g-3]_u$, say the order of this action is $n\geq 2$, and a primitive $n$th root $\zeta$ of $1$ such that the \rsbt{} sum of the action with respect to $\zeta$ satisfies
\[
0<\RT<1,
\]
where the action has eigenvalues $\zeta^{a_j}$ with $0\leq a_j<n$. We have to show that this implies that the stable model $\stab$ of $\spini$ has an elliptic tail $\comp$ with $j$-invariant $0$  such that $\spinii_{|\comp}=\calO_{\comp}$.

In a first step we may pass to a ``more general'' non-canonical singularity without loss of generality. Let $P_{i_0}\in\sing\stab$ be a non-disconnecting node of $\stab$, i.e. $P_{i_0}\in\EX\spin\cup\Delta\spin$, and let $P_{i_1}=\isoi_\stab(P_{i_0}),\dotsc,P_{i_{m-1}}=\isoi_\stab^{m-1}(P_{i_0})$ be distinct while $\isoi_\stab^m(P_{i_0})=P_{i_0}$. Then $P_{i_j}\in\EX\cup\Delta$ and the action of $\iso$ on $\bigoplus_{j}\CC_{\tau_{i_j}}$ is
\[
\tau_{i_j}\mapsto c_{i_j}\tau_{i_{j+1}}
\]
for appropriate non-zero scalars $c_{i_j}$, where the index $j$ is considered modulo $m$. This action has characteristic polynomial $x^m-\prod_jc_{i_j}$ and its eigenvalues are the $m$th roots of $\prod_jc_{i_j}$.
\begin{prop}\label{prop:singred}
We may assume wlog that the pair $(\spin,\iso)$ is \emph{singularity reduced}, i.e. $\prod_jc_{i_j}\neq 1$ for every cycle of non-disconnecting nodes of $\stab$ as above.
\end{prop}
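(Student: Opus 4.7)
The plan is to induct on the number of ``bad cycles'', i.e., $\isoi_\stab$-orbits of non-disconnecting nodes $P_{i_0},\dotsc,P_{i_{m-1}}$ with $\prod_j c_{i_j}=1$; the base case of no bad cycles is singularity reduction by definition. Given such a bad cycle at $(\spin,\iso)$, the cyclic action $\tau_{i_j}\mapsto c_{i_j}\tau_{i_{j+1}}$ on the subspace $W:=\bigoplus_j\CC_{\tau_{i_j}}\subseteq V=\CC[3g-3]_\tau$ has characteristic polynomial $x^m-\prod_j c_{i_j}=x^m-1$, so $1$ is an eigenvalue of the restriction of $\iso$ to $W$. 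I would choose a non-zero $p\in W$ in this $1$-eigenspace; by construction its $\tau$-coordinates outside the cycle vanish, while every $\tau_{i_j}(p)$ is non-zero. Let $(\spini',\spinii',\spiniii')$ denote the spin curve represented by $p$ in the universal deformation of $\spin$. Since $\tau_{i_j}(p)\neq 0$, each node $P_{i_j}$ is smoothed in the stable model $\stab'$ of $\spini'$ (via $t_{i_j}=\tau_{i_j}^2$ for $P_{i_j}\in\EX$ and $t_{i_j}=\tau_{i_j}$ for $P_{i_j}\in\Delta$), so $\stab'$ has strictly fewer non-disconnecting nodes. The identity $\iso(p)=p$ ensures that $\iso$ induces an automorphism of $(\spini',\spinii',\spiniii')$, and the remaining $\isoi_\stab$-orbits of non-disconnecting nodes persist with their scalars $c_j$ unchanged. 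Hence the number of bad cycles strictly decreases.

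It remains to check that $[(\spini',\spinii',\spiniii')]\in\sgbar$ is still a non-canonical singularity, witnessed by the induced automorphism (which I continue to call $\iso$). Let $H\subseteq\aut\spin$ denote the subgroup generated by quasireflections. By Remark~\ref{rem:H}, every element of $H$ acts on $V$ as a diagonal matrix whose non-trivial diagonal entries occur only in coordinates $\tau_i$ with $P_i\in\ET\cup\DC$. These coordinates are disjoint from $W$, so $H$ acts trivially on $W$ and in particular fixes $p$. Thus $H$ is contained in the stabilizer $G_p$, which under specialisation equals $\aut(\spini',\spinii',\spiniii')$, and consequently the quasireflection subgroup $H'$ of $\aut(\spini',\spinii',\spiniii')$ satisfies $H'=H\cap\aut(\spini',\spinii',\spiniii')=H$. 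Therefore the tangent space of $\sgbar$ at $[(\spini',\spinii',\spiniii')]$, taken modulo quasireflections, is canonically identified with the same $V/H=\CC[3g-3]_u$ as at $[\spin]$, and $\iso$ acts on it by exactly the same linear map. The \rsbt{} sum with respect to $\zeta$ is unchanged, still strictly less than $1$, so the \rsbt{} criterion fails at $[(\spini',\spinii',\spiniii')]$.

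Iterating the construction drives the number of bad cycles down to zero, producing the desired singularity-reduced witness. The main obstacle is the equality $H'=H$: one must rule out new quasireflections appearing at the deformed curve that could alter the $u$-coordinates and spoil the \rsbt{} computation. This is supplied precisely by Remark~\ref{rem:H}, which confines the quasireflections of $\aut\spin$ to coordinates of nodes in $\ET\cup\DC$ — a set disjoint from any cycle of non-disconnecting nodes — so the deformation along $W$ does not disturb the quasireflection structure.
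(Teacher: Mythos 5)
Your construction is essentially the paper's: you deform along a fixed vector of $\iso$ supported on the coordinates of a bad cycle (your eigenvector $p$ for the eigenvalue $1$ is, up to scalar, the paper's $w=\sum_j\iso^jw_0$), observe that this smooths exactly the nodes of that cycle while $\iso$ survives, and then compare \rsbt{} data. Two of your refinements are fine and arguably cleaner than the paper's: doing one cycle at a time by induction instead of all bad cycles simultaneously, and comparing eigenvalues by noting that the (linearized) action of $\iso$ on the tangent space at the fixed point $p$ is literally the same matrix as at $0$ — the paper instead argues that the eigenvalues vary continuously in the discrete set of $n$th roots of unity. Your identification $H'=H$ of the quasireflection subgroups via Remark~\ref{rem:H} (quasireflections live only on coordinates of disconnecting exceptional nodes, which are disjoint from any cycle of non-disconnecting nodes) is also the paper's argument in slightly different clothing.

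There is, however, one omission. The proposition is invoked in the only-if direction of Theorem~\ref{thm:cansing}: one deforms $(\spin,\iso)$ to a singularity-reduced pair, proves that the \emph{deformed} stable model has a smooth elliptic tail with $j$-invariant $0$ and trivial theta characteristic, and then must transfer this conclusion \emph{back} to $\stab$. So the ``wlog'' requires not only that the deformed pair still violates the \rsbt{} inequality (which you verify), but also that $\spin$ has such an elliptic tail if and only if $(\spini',\spinii',\spiniii')$ does. You never state or prove this equivalence. It is easy to supply from an observation you already make: every elliptic tail node is disconnecting (by the parity condition of Remark~\ref{rem:even}), hence lies outside every cycle of non-disconnecting nodes, so the deformation along $W$ leaves all elliptic tails, their nodes, and the restrictions of the theta characteristic to them untouched. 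You should add this sentence; without it the reduction does not justify the use made of it in the proof of the theorem.
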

\begin{proof}
Let $P_{i_0},\dotsc,P_{i_{m-1}}$ be a cycle of non-disconnecting nodes as above such that $\prod_jc_{i_j}=1$. The idea is to deform the spin curve $\spin$ to a nearby spin curve $(\spini',\spinii',\spiniii')$ in such a way that the nodes $P_{i_j}$ are smoothed and the automorphism $\iso$ deforms to an automorphism $(\isoi',\isoii')$ of $(\spini',\spinii',\spiniii')$. We then have to prove that the actions of $\iso$ on $\CC[3g-3]_u$ and the respective action of $(\isoi',\isoii')$ have the same eigenvalues, hence the same \rsbt{} sum, and that $\spin$ has an elliptic tail with $j$-invariant $0$ and trivial theta characteristic if and only if $(\spini',\spinii',\spiniii')$ does.

The $m$th power of the action of $\iso$ on $W=\bigoplus_j\CC_{\tau_{i_j}}$ is given by
\[
\tau_{i_j}\mapsto \Big(\prod_kc_{i_k}\Big)\tau_{i_j},\qquad j=1,\dotsc,m.
\]
Hence the assumption $\prod_kc_{i_k}=1$ implies that this action is trivial. Let $w_0\in\CC_{\tau_{i_0}}$ be a nonzero element and set $w=\sum_{j=0}^{m-1}\iso^j w_0\in W$. Let $(\spini',\spinii',\spiniii')$ be the fibre of the local universal deformation $(\calX\rightarrow\CC[3g-3]_\tau,\cspinii,\cspiniii)$ of $\spin$ over $w$. Since $\iso^m$ is trivial on $W$, the element $w$ is fixed by $\iso$
\[
\iso w=\sum_{j=0}^{m-1}\iso^{j+1}w_0=\iso^mw_0+\sum_{j=1}^{m-1}\iso^jw_0=w
\]
which means that $\iso$ deforms to an automorphism $(\isoi',\isoii')$ of $(\spini',\spinii',\spiniii')$.
Moreover, every summand $\iso^jw_0\in\CC_{\tau_{i_j}}$ is non-zero. Hence the node $P_{i_j}$ is smoothed in $(\spini',\spinii',\spiniii')$.

Applying this argument to every cycle of non-disconnecting nodes of $\stab$ with $\prod_jc_{i_j}=1$ gives a spin curve $(\spini',\spinii',\spiniii')$ where all these nodes are smoothed and $\iso$ deforms to $(\isoi',\isoii')\in\aut(\spini',\spinii',\spiniii')$. The pair $((\spini',\spinii',\spiniii'),(\isoi',\isoii'))$ is then singularity reduced, i.e. for all cycles of non-disconnecting nodes $\prod_jc_{i_j}\neq 1$. This deformation does not affect the disconnecting nodes of $\stab$, in particular $\spini$ and $\spini'$ have the same elliptic tail nodes and elliptic tails. On the one hand this means 
that $\spin$ has an elliptic tail with $j$-invariant $0$ with trivial theta characteristic if and only if $(\spini',\spinii',\spiniii')$ does. On the other hand this also implies that the subgroups of $\paut\spin$ and $\paut(\spini',\spinii',\spiniii')$ generated by quasireflections coincide. 

Let us compare eigenvalues of the actions of $\iso$ and $(\isoi',\isoii')$. The actions of the two automorphisms on the $\tau$-coordinates corresponding to disconnecting nodes of $\stab$ are the same. The fact that the subgroups generated by quasireflections coincide implies that the actions on the corresponding $u$-coordinates agree also. For the remaining coordinates we have $\tau_i=u_i$. The action of $\iso$ on these coordinates deforms continously to the action of the deformed automorphism $(\isoi',\isoii')$. Therefore, the eigenvalues vary continously and since every eigenvalue is an $n$th root of $1$ and these form a discrete set the eigenvalues of $\iso$ and $(\isoi',\isoii')$ on the remaining coordinates agree. In particular $\iso$ and $(\isoi',\isoii')$ have the same \rsbt{} sum.
\end{proof}
From now on we fix a singularity reduced pair $(\spin,\iso)$ and a primitive $n$th root $\zeta$ of $1$ such that the \rsbt{} sum of $\iso$ with respect to $\zeta$ satisfies
\[
0<\RT<1.
\]
\begin{prop}\label{prop:pairnodes} \emph{(compare~\cite[p. 34]{hamu1982})}
The induced automorphism $\isoi_\stab$ of the stable model $\stab$ either fixes every node but two which are interchanged or fixes every node.
\end{prop}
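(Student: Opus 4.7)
The plan is to mimic the combinatorial argument on page~34 of~\cite{hamu1982}, transferred to the coordinates $u$ on which, by Prill's construction, the image of $\paut\spin$ contains no quasireflections and the \rsbt{} criterion applies directly. The key step is a uniform lower bound on the contribution that each non-trivial cycle of permuted nodes makes to the \rsbt{} sum.

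First I would decompose the permutation of $\sing\stab$ induced by $\isoi_\stab$ into cycles and fix a cycle $P_{i_0},P_{i_1},\dotsc,P_{i_{m-1}}$ of length $m\geq 2$. From the discussion preceding Proposition~\ref{prop:qrsg} the action of $\iso$ on the $\tau$-subspace $\bigoplus_{j}\CC_{\tau_{i_j}}$ is the cyclic permutation--scaling $\tau_{i_j}\mapsto c_{i_j}\tau_{i_{j+1}}$ for appropriate non-zero scalars $c_{i_j}$. Since $u_{i_j}$ equals $\tau_{i_j}^4$, $\tau_{i_j}^2$ or $\tau_{i_j}$ according to whether $P_{i_j}$ lies in $\ET$, $\DC$ or $\EX\cup\Delta$, the induced action on the associated $u$-subspace is again cyclic of length $m$, with eigenvalues the $m$-th roots of some non-zero scalar; these are $n$-th roots of unity because the action has order $n$. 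Choosing one eigenvalue $\omega=\zeta^{\alpha}$ with $0\leq\alpha<n/m$ (using $\zeta_m=\zeta^{n/m}$), the $m$ eigenvalues read $\zeta^{\alpha+kn/m}$, $k=0,\dotsc,m-1$, with exponents automatically in $[0,n)$ and summing to $m\alpha+n(m-1)/2$. Hence this cycle contributes
\[
\frac{m\alpha}{n}+\frac{m-1}{2}\;\geq\;\frac{m-1}{2}
\]
to the \rsbt{} sum $\RT$, independently of node type and of whether $1$ occurs among the eigenvalues.

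The remaining $u$-coordinates (those attached to fixed nodes or to component--deformation directions) contribute non-negatively, so $\RT$ is at least the sum of the above contributions over all non-trivial cycles. A single cycle of length $m\geq 3$ already forces $\RT\geq 1$, contradicting the standing hypothesis $\RT<1$; hence every non-trivial cycle has length exactly~$2$. Two or more such transpositions would again yield $\RT\geq 1$, so at most one can occur, which is precisely the dichotomy in the statement.

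The principal worry is confirming that the bound $(m-1)/2$ is truly insensitive both to the node type and to the value of $\prod c_{i_j}$. For $\EX$ and $\Delta$ cycles the singularity-reducedness of the pair $(\spin,\iso)$ (Proposition~\ref{prop:singred}) rules out $\omega=1$ and yields the strict improvement $m\alpha/n>0$; for $\DC$ and $\ET$ cycles the coordinate change to $u$ squares respectively takes the fourth power of the scalars, so $\omega=1$ may occur, but the bound $(m-1)/2$ still holds, with equality achieved at the spectrum $\{1,\zeta_m,\dotsc,\zeta_m^{m-1}\}$. Thus no separate case analysis on the partition $N=\ET\sqcup\DC\sqcup\EX$ is needed for the final estimate.
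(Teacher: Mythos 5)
Your proposal is correct and follows essentially the same argument as the paper: decompose the induced permutation of nodes into cycles, observe that on the corresponding $u$-coordinate block the action is a cyclic permutation--scaling whose eigenvalues are an arithmetic progression of exponents modulo $n$, compute the resulting block contribution $\frac{m\alpha}{n}+\frac{m-1}{2}\geq\frac{m-1}{2}$, and conclude $m\leq 2$ with at most one $2$-cycle. The only cosmetic difference is that you pass through the $\tau$-coordinates and then translate to $u$, whereas the paper works directly on the $u$-block with unspecified scalars $\alpha_j$; and your closing worry about node type is already resolved by the fact that the bound $(m-1)/2$ is used without reference to whether $\omega=1$ can occur.
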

\begin{proof}
Let $P_{i_0},P_{i_1},\dotsc,P_{i_{m-1}}$ be a cycle of nodes of $\stab$ and denote by $W=\bigoplus_j\CC_{u_{i_j}}$ the corresponding subspace of $\CC[3g-3]_u$. The action of $\iso$ on $W$ is given by
\[
B={\footnotesize\begin{pmatrix}
0&\!\!\!\!\alpha_1&&\\[-1ex]
\vdots&&\!\!\!\!\ddots&\\[-0.5ex]
0&&&\!\!\!\!\alpha_{m-1}\\[-0.5ex]
\alpha_m&\!\!\!\!0&\!\!\!\!\dotsb&\!\!\!\!0
\end{pmatrix}}
\]
for appropriate nonzero scalars $\alpha_j$. We have already seen, that in such a situation $B^m=\left(\prod_j\alpha_{j}\right)\cdot\1$, where $\1$ is the indentity matrix. Since $n$ is the order of the action of $\iso$ on $\CC[3g-3]_u$, $m$ divides $n$ and 
\[
\1=B^n=\Big(\prod_j\alpha_{j}\Big)^{\frac nm}\cdot\1.
\]
Hence $\prod_j\alpha_{j}$ is an $\frac nm$th root of $1$, say $\zeta^{lm}$ for an appropriate $0\leq l<\frac nm$. The characteristic polynomial of $B$ is $x^m-\prod_j\alpha_j=x^m-\zeta^{lm}$. Therefore, the eigenvalues of $B$ are $\zeta^{l+j\frac nm}$ for $j=0,\dotsc,m-1$ and the corresponding part of the \rsbt{} sum is
\[
\frac 1n\sum_{j=0}^{m-1}\left(l+j\frac nm\right)=\frac{ml}{n}+\frac{m-1}{2}.
\]
This gives
\[
1>\RT\geq\frac{ml}{n}+\frac{m-1}{2}\geq\frac{m-1}{2}
\]
and $m$ is either $1$ or $2$. Suppose there are two different cycles of nodes of length $m=2$. Then every cycle contributes at least $\frac 12$ to the \rsbt{} sum, which gives the contradiction $1>\RT\geq\frac 12+\frac 12=1$. Therefore, either every node is fixed by $\isoi_\stab$ or there exists one pair of nodes which are interchanged by $\isoi_\stab$ and all other nodes are fixed.
\end{proof}
\begin{prop}
Every irreducible component $\comp$ of $\stab$ is fixed by $\isoi_\stab$.
\end{prop}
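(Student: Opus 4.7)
The plan is to imitate the cycle-counting argument of Proposition~\ref{prop:pairnodes}, but applied to components rather than to nodes. Suppose for a contradiction that $\isoi_\stab$ has an orbit $\mathcal{O} = \{C_{j_0}, C_{j_1}=\isoi_\stab(C_{j_0}),\dotsc,C_{j_{m-1}}\}$ of length $m \geq 2$ on the irreducible components of $\stab$. Then the components in $\mathcal{O}$ are pairwise isomorphic and share a common geometric genus $g'$ and number $\delta$ of incident special points, hence a common deformation dimension $d := 3g' - 3 + \delta \geq 0$.

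The decisive subspace is $W := \bigoplus_{k=0}^{m-1} H^1(C_{j_k}^\nu, T_{C_{j_k}^\nu}(D_{j_k})) \subset \CC[3g-3]_u$. Because the corresponding $u$-coordinates satisfy $u_i = \tau_i = t_i$, the action of $\iso$ on $W$ coincides with that of $\isoi_\stab$; in a basis adapted to the cyclic orbit it is a block cyclic permutation with $d \times d$ blocks. Applying the eigenvalue calculation of Proposition~\ref{prop:pairnodes} blockwise splits the eigenvalues of $\iso|_W$ into $d$ packets of $m$ values each, and each packet contributes at least $(m-1)/2$ to the \rsbt{} sum. Thus $W$ alone contributes at least $d(m-1)/2$.

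I first use this estimate to rule out $m \geq 3$: if $d \geq 1$ the bound $d(m-1)/2 \geq 1$ is already a contradiction to the \rsbt{} inequality; if $d = 0$, any node $P$ incident to $C_{j_0}$ produces a $\isoi_\stab$-orbit of nodes that is either entirely unshared among the components of $\mathcal{O}$ (of length $m \geq 3$) or that permutes the at most two components through $P$, thereby forcing $m = 2$; both alternatives contradict either Proposition~\ref{prop:pairnodes} or the standing assumption $m \geq 3$. For $m = 2$ the combinatorial constraints coming from stability and Proposition~\ref{prop:pairnodes} leave only the three possibilities $(g',\delta) \in \{(0,3),(0,4),(1,1)\}$ with $d \leq 1$.

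Each remaining subcase must then be eliminated individually. In the case $(g',\delta) = (1,1)$ the two swapped components are elliptic tails whose elliptic tail nodes form the unique interchanged pair; a direct diagonalisation of the action on the $u$-coordinates $u_i = \tau_i^4$ of that pair produces a further contribution of at least $1/2$, which combined with the $\geq 1/2$ from $W$ forces the \rsbt{} sum to be $\geq 1$. The case $(g',\delta) = (0,4)$ splits along the shared/unshared classification of the four nodes: either all four are shared, in which case $C_{j_0} \cup C_{j_1}$ accounts for the entire curve and gives $g \leq 3 < 4$, or the unique unshared pair contributes a further $\geq 1/2$ on top of the $\geq 1/2$ from $W$. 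The main obstacle is the residual case $(g',\delta) = (0,3)$, where $W$ contributes nothing: here one must track the geometry of a third component $C_{j'}$, necessarily fixed by $\isoi_\stab$ and receiving the swapped unshared pair of nodes, and show that the induced involution on $C_{j'}$, combined with the \rsbt{} contributions of the incident node-coordinates, always drives the total sum to at least $1$.
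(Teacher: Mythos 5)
Your overall strategy is the paper's: decompose $\CC[3g-3]_u$ along the $\isoi_\stab$-orbit of a non-fixed component, use the block-cyclic eigenvalue computation to extract the bound $d(m-1)/2$, reduce to $m=2$ and $d\le 1$, and finish the residual cases by adding node contributions from Proposition~\ref{prop:pairnodes}. Your treatments of $(g',\delta)=(1,1)$ and $(0,4)$ are sound (the shared/unshared dichotomy for $(0,4)$ is in fact a tidy substitute for the paper's appeal to the corresponding Harris--Mumford cases). However, the case $(g',\delta)=(0,3)$ --- which you correctly identify as the main obstacle --- is not actually proved, and the route you gesture at would not close it. After the reduction to exactly two fixed nodes $P_1,P_2$ joining $\comp$ to $\isoi_\stab(\comp)$ and one interchanged pair $P_3\leftrightarrow P_4$ landing on a fixed third component $\compi$, the only contribution you have in hand is the $\tfrac 12$ from the interchanged pair, since $W$ contributes $0$. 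Extracting the missing $\tfrac 12$ from ``the induced involution on $\compi$'' is not guaranteed: that automorphism of $\compinu$ need only swap the two marked points $P_3^-,P_4^-$, and a priori its action on $H^1(\compinu,T_{\compinu}(D_{j'}))$ could contribute as little as $\tfrac 14$ (e.g.\ an order-$4$ automorphism), giving only $\tfrac 12+\tfrac 14<1$.

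The paper closes this case by a different mechanism, which is the one genuinely new ingredient relative to the $\mgbar$ argument and which your sketch omits: the \emph{singularity-reduced} normalisation of Proposition~\ref{prop:singred}. At the fixed node $P_1$ the two branches are swapped, so in local coordinates $xy=t_1$ one has $t_1\mapsto t_1$; since $P_1$ is non-disconnecting, singularity-reducedness forbids $\tau_1\mapsto\tau_1$, forcing $P_1$ to be exceptional with $\tau_1\mapsto-\tau_1$, hence $u_1\mapsto -u_1$ and an extra contribution of $\tfrac 12$. Together with the $\tfrac 12$ from $P_3\leftrightarrow P_4$ this yields $\RT\ge 1$. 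Without invoking (or reproving) this reduction to a singularity-reduced pair, your argument for $(0,3)$ has a genuine gap; the same device is also what eliminates the spurious ``trivial cycle'' contributions implicitly assumed elsewhere in your eigenvalue bookkeeping.
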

\begin{proof}
Let $\stab_{i_0}$ be an irreducible component of $\stab$, $\stab_{i_0},\stab_{i_1}=\isoi_\stab(\stab_{i_0}),\dotsc,\stab_{i_{m-1}}=\isoi^{m-1}_\stab(\stab_{i_0})$ distinct components and $\isoi^m_\stab(\stab_{i_0})=\stab_{i_0}$. Assume that $\stab_{i_0}$ is not fixed by $\isoi_\stab$, i.e. $m\geq 2$. Consider the subspace $W$ of $\CC[3g-3]_t$ corresponding to deformations of the components $\stab_{i_j}$, i.e. $W=\bigoplus_jH^1(\stab_{i_j}^\nu,T_{\stab_{i_j}^\nu}(D_{i_j}))$, where $\stab_{i_j}^\nu$ is the normalisation and $D_{i_j}$ is the set of preimages $P^\pm$ of nodes lying in $\stab_{i_j}^\nu$ considered as a divisor. If $t_i$ is a coordinate of $W$ then $t_i=\tau_i=u_i$ and the actions of $\isoi_\stab$ and $\iso$ on these coordinates coincide. 

Therefore the calculations on page 35 in~\cite{hamu1982} apply to our case. That means, if the eigenvalues of $\iso^m$ on $H^1(\stab_{i_0}^\nu,T_{\stab_{i_0}^\nu}(D_{i_0}))$ are the $\frac nm$th roots $\zeta^{ml_1},\dotsc,\zeta^{ml_d}$ of $1$ where $d=3g(\stab_{i_0}^\nu)-3+\deg D_{i_0}=\dim H^1(\stab_{i_0}^\nu,T_{\stab_{i_0}^\nu}(D_{i_0}))$ and $0\leq l_i<\frac nm$, then the eigenvalues of $\iso$ on $W$ are
\[
\zeta^{l_i+j\frac nm}, \qquad i=1,\dotsc,d,\ j=0,\dotsc,m-1.
\]
This gives
\[
1>\RT\geq\frac 1n\sum_{i=1}^d\sum_{j=0}^{m-1}\left(l_i+j\frac nm\right)=\frac{d(m-1)}{2}+\frac mn\sum_{i=1}^dl_i\geq\frac{d(m-1)}{2}
\]
and either $d=0$ with arbitrary $m\geq 2$ or $d=1$ and $m=2$. There are six possibilities for $\stab_{i_0}\subset\stab$:
\begin{enumerate}
\item $\stab_{i_0}^\nu$ elliptic, $1$ marked point, $\stab_{i_0}$ is a smooth elliptic tail

{\scriptsize
\[\begindc{0}[1]
\obj(-25,0)[smoothnu]{{\includegraphics{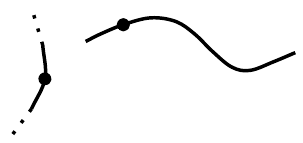}}}[0] 
\obj(150,0)[smooth]{{\includegraphics{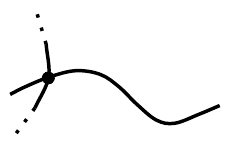}}}[0] 
\obj(125,3){$P_1$}
\obj(-31,7){$P_1^+$}
\obj(-64,0){$P_1^-$}
\obj(39,8){$\stab_{i_0}^\nu$ elliptic}
\obj(189,-8){$\stab_{i_0}$}
\mor{smoothnu}{smooth}[85,50]{$\nu_\stab$} 
\enddc\]}
and $d=1$, $m=2$. 

\item $\stab_{i_0}^\nu$ rational, $4$ marked points mapping to $2$ irreducible nodes 

{\scriptsize
\[\begindc{0}[1]
\obj(0,0)[stabnu]{{\includegraphics{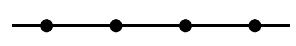}}}[0] 
\obj(150,0)[stab]{{\includegraphics{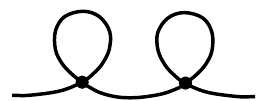}}}[0] 
\obj(137,-17){$P_1$}
\obj(169,-17){$P_2$}
\obj(-28,-8){$P_1^+$}
\obj(-8,-8){$P_1^-$}
\obj(13,-8){$P_2^+$}
\obj(33,-8){$P_2^-$}
\obj(-63,-1){rational $\stab_{i_0}^\nu$}
\obj(195,-14){$\stab_{i_0}$}
\mor{stabnu}{stab}[60,50]{$\nu_\stab$} 
\enddc\]}
and $d=1$, $m=2$.

\item $\stab_{i_0}^\nu$ rational, $4$ marked points mapping to $1$ irreducible node and $2$ non-irreducible nodes

{\scriptsize
\[\begindc{0}[1]
\obj(-60,0)[stabnu]{{\includegraphics{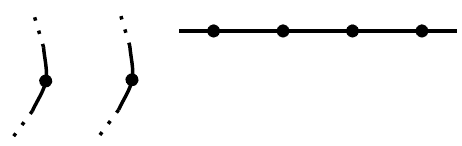}}}[0] 
\obj(150,0)[stab]{{\includegraphics{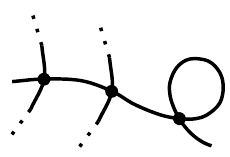}}}[0] 
\obj(136,5){$P_1$}
\obj(156,0){$P_2$}
\obj(165,-17){$P_3$}
\obj(-63,5){$P_1^+$}
\obj(-43,5){$P_2^+$}
\obj(-23,5){$P_3^+$}
\obj(-3,5){$P_3^-$}
\obj(-121,0){$P_1^-$}
\obj(-96,0){$P_2^-$}
\obj(30,11){$\stab_{i_0}^\nu$ rational}
\obj(185,-21){$\stab_{i_0}$}
\mor{stabnu}{stab}[113,60]{$\nu_\stab$} 
\enddc\]}
and $d=1$, $m=2$.
\item $\stab_{i_0}^\nu$ rational, $4$ marked points mapping to $4$ non-irreducible nodes 

{\scriptsize
\[\begindc{0}[1]
\obj(-185,0)[]{$$}[1] 
\obj(-50,0)[stabnu]{{\includegraphics{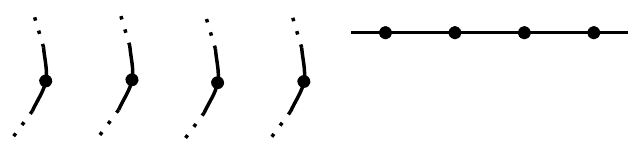}}}[0] 
\obj(175,0)[stab]{{\includegraphics{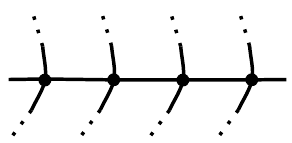}}}[0] 
\obj(150,-8){$P_1$}
\obj(170,-8){$P_2$}
\obj(190,-8){$P_3$}
\obj(210,-8){$P_4$}
\obj(-29,5){$P_1^+$}
\obj(-9,5){$P_2^+$}
\obj(11,5){$P_3^+$}
\obj(31,5){$P_4^+$}
\obj(-61,0){$P_4^-$}
\obj(-86,0){$P_3^-$}
\obj(-111,0){$P_2^-$}
\obj(-136,0){$P_1^-$}
\obj(65,11){$\stab_{i_0}^\nu$ rational}
\obj(223,-2){$\stab_{i_0}$}
\mor{stabnu}{stab}[140,50]{$\nu_\stab$} 
\enddc\]}
and $d=1$, $m=2$.
\item $\stab_{i_0}^\nu$ rational, $3$ marked points mapping to $1$ irreducible and $1$ disconnecting node, i.e.~$\stab_{i_0}$ is a singular elliptic tail 

{\scriptsize
\[\begindc{0}[1]
\obj(-20,0)[singularnu]{{\includegraphics{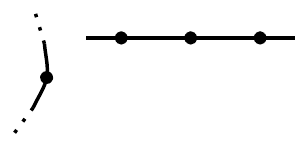}}}[0] 
\obj(150,0)[singular]{{\includegraphics{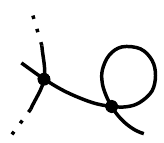}}}[0] 
\obj(132,-4){$P_1$}
\obj(155,-15){$P_2$}
\obj(-26,2){$P_1^+$}
\obj(-7,2){$P_2^+$}
\obj(14,2){$P_2^-$}
\obj(-56,0){$P_1^-$}
\obj(46,9){$\stab_{i_0}^\nu$ rational}
\obj(175,-20){$\stab_{i_0}$}
\mor{singularnu}{singular}[95,35]{$\nu_\stab$} 
\enddc\]}
and $d=0$, $m\geq 2$.
\item $\stab_{i_0}^\nu$ rational, $3$ marked points mapping to $3$ non-irreducible nodes

{\scriptsize
\[\begindc{0}[1]
\obj(-125,0)[stabnu]{{\includegraphics{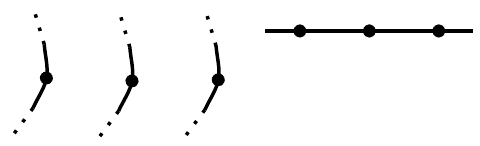}}}[0] 
\obj(90,0)[stab]{{\includegraphics{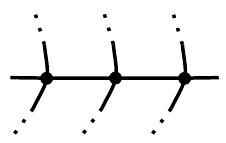}}}[0] 
\obj(76,-8){$P_1$}
\obj(96,-8){$P_2$}
\obj(116,-8){$P_3$}
\obj(-105,5){$P_1^+$}
\obj(-85,5){$P_2^+$}
\obj(-65,5){$P_3^+$}
\obj(-138,0){$P_3^-$}
\obj(-163,0){$P_2^-$}
\obj(-188,0){$P_1^-$}
\obj(-33,11){$\stab_{i_0}^\nu$ rational}
\obj(129,-2){$\stab_{i_0}$}
\mor{stabnu}{stab}[120,50]{$\nu_\stab$} 
\enddc\]}
and $d=0$, $m\geq 2$.
\end{enumerate}
Most of these cases can be excluded as in~\cite{hamu1982}. For example in case (ii) the curve $\stab$ has only genus $2$, but we assumed the genus $g$ to be at least $4$. The cases (i), (iii) and (iv) are the cases (e), (d) and (c) of~\cite{hamu1982} and give either curves of genus at most $3$ or they lead to a \rsbt{} sum bigger than $1$. In case (v) the node $P_2$ cannot be fixed. Hence by Proposition~\ref{prop:pairnodes} $P_2$ and $\isoi_\stab(P_2)\neq P_1$ are interchanged and $P_1$ is fixed. But then $m=2$, the image $\isoi_\stab(\stab_{i_0})$ is the second component through $P_1$ and $\stab$ has only genus $2$. 

Therefore, we are left with case (vi). By Proposition~\ref{prop:pairnodes} at least one of the three nodes on $\stab_{i_0}$ is fixed, say $P_1$. If all three were fixed, $\isoi_\stab(\stab_{i_0})$ would have to be the second component through all three nodes, giving a curve of genus $2$:

{\scriptsize
\[\begindc{0}[1]
\obj(0,0)[stabnu]{{\includegraphics{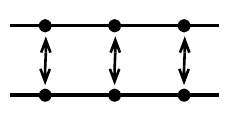}}}[0] 
\obj(150,0)[stab]{{\includegraphics{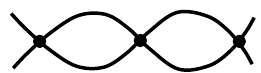}}}[0] 
\obj(125,-9){$P_1$}
\obj(153,-9){$P_2$}
\obj(181,-9){$P_3$}
\obj(-18,18){$P_1^+$}
\obj(2,18){$P_2^+$}
\obj(22,18){$P_3^+$}
\obj(22,-18){$P_3^-$}
\obj(-18,-18){$P_1^-$}
\obj(2,-18){$P_2^-$}
\obj(-54,9){rational $\stab_{i_0}^\nu$}
\obj(-63,-11){rational $\isoi_\stab^\nu(\stab_{i_0}^\nu)$}
\obj(193,7){$\stab_{i_0}$}
\obj(203,-10){$\isoi_\stab(\stab_{i_0})$}
\mor{stabnu}{stab}[55,60]{$\nu_\stab$} 
\enddc\]}

Now assume that only $P_1$ is  fixed, then the other two nodes must be interchanged, i.e.~$P_2\mapsto P_3\mapsto P_2$. Then $\isoi_\stab(\stab_{i_0})$ must be the second component through all three nodes. Again $\stab$ has only genus $2$. 
Hence exactly two of the nodes on $\stab_{i_0}$, say $P_1$ and $P_2$, are fixed and $P_3\mapsto P_4\mapsto P_3$ where $P_3\neq P_4\in\isoi_\stab(\stab_{i_0})$, giving a contribution to the \rsbt{} sum of at least $\frac 12$ by Proposition~\ref{prop:pairnodes}. $\isoi_\stab(\stab_{i_0})$ is the second component through $P_1$ and $P_2$.

{\scriptsize
\[\begindc{0}[1]
\obj(-25,0)[stabnu]{{\includegraphics{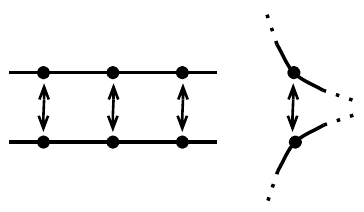}}}[0] 
\obj(150,0)[stab]{{\includegraphics{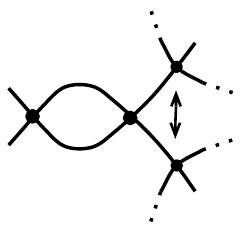}}}[0] 
\obj(117,0){$P_1$}
\obj(144,0){$P_2$}
\obj(159,15){$P_3$}
\obj(159,-14){$P_4$}
\obj(-63,18){$P_1^+$}
\obj(-43,18){$P_2^+$}
\obj(-23,18){$P_3^+$}
\obj(-23,-18){$P_4^+$}
\obj(-63,-18){$P_1^-$}
\obj(-43,-18){$P_2^-$}
\obj(15,15){$P_3^-$}
\obj(15,-15){$P_4^-$}
\obj(-5,35){$\stab_{i'_0}^\nu$}
\obj(-16,-36){$\isoi_\stab^\nu(\stab_{i'_0}^\nu)$}
\obj(-100,10){rational $\stab_{i_0}^\nu$}
\obj(-109,-10){rational $\isoi_\stab^\nu(\stab_{i_0}^\nu)$}
\obj(112,13){$\stab_{i_0}^\nu$}
\obj(102,-13){$\isoi_\stab(\stab_{i_0}^\nu)$}
\obj(194,6){$\stab_{i'_0}$}
\obj(203,-7){$\isoi_\stab(\stab_{i'_0})$}
\mor{stabnu}{stab}[75,60]{$\nu_\stab$} 
\enddc\]}

In case $\stab_{i'_0}\neq\isoi_\stab(\stab_{i'_0})$ the component $\stab_{i'_0}$ has to be as in (vi). The remaining nodes must be fixed, hence the curve has only genus $3$. 

{\scriptsize
\[\begindc{0}[1]
\obj(-50,0)[stabnu]{{\includegraphics{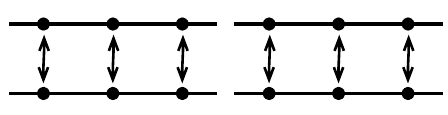}}}[0] 
\obj(165,0)[stab]{{\includegraphics{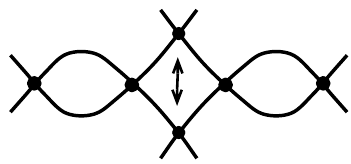}}}[0] 
\obj(116,0){$P_1$}
\obj(144,0){$P_2$}
\obj(158,14){$P_3$}
\obj(158,-14){$P_4$}
\obj(189,-1){$P_5$}
\obj(215,-1){$P_6$}
\obj(-100,18){$P_1^+$}
\obj(-80,18){$P_2^+$}
\obj(-60,18){$P_3^+$}
\obj(-60,-18){$P_4^+$}
\obj(-100,-18){$P_1^-$}
\obj(-80,-18){$P_2^-$}
\obj(-35,18){$P_3^-$}
\obj(-35,-18){$P_4^-$}
\obj(-15,18){$P_5^+$}
\obj(-15,-18){$P_5^-$}
\obj(5,18){$P_6^+$}
\obj(5,-18){$P_6^-$}
\obj(31,8){$\stab_{i'_0}^\nu \text{ rat.}$}
\obj(39,-11){$\isoi_\stab^\nu(\stab_{i'_0}^\nu)\text{ rat.}$}
\obj(-127,8){$\text{rat. }\stab_{i_0}^\nu$}
\obj(-137,-11){$\text{rat. }\isoi_\stab^\nu(\stab_{i_0}^\nu)$}
\obj(112,13){$\stab_{i_0}$}
\obj(102,-13){$\isoi_\stab(\stab_{i_0})$}
\obj(222,13){$\stab_{i'_0}$}
\obj(232,-13){$\isoi_\stab(\stab_{i'_0})$}
\mor{stabnu}{stab}[110,72]{$\nu_\stab$} 
\enddc\]}

Therefore, $\stab_{i'_0}$ is fixed and it is the second branch through $P_3$ and $P_4$. Consider the restriction $\varphi={\isoi_\stab}_{|\stab_{i_0}\cup\isoi_\stab(\stab_{i_0})}$, then $\varphi^2$ fixes the two components, all the nodes and all the marked points in the pointed normalisations. Hence $\varphi^2$ is the identity.

{\scriptsize
\[\begindc{0}[1]
\obj(0,120)[stabnu]{{\includegraphics{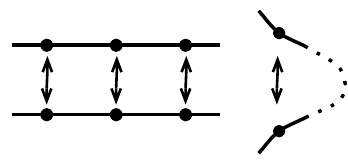}}}[0] 
\obj(-35,138){$P_1^+$}
\obj(-15,138){$P_2^+$}
\obj(5,138){$P_3^+$}
\obj(35,140){$P_3^-$}
\obj(-35,104){$P_1^-$}
\obj(-15,104){$P_2^-$}
\obj(5,104){$P_4^+$}
\obj(35,100){$P_4^-$}
\obj(-62,130){$\text{rat. }\comp^\nu$}
\obj(-72,110){$\text{rat. }\isoi_\stab^\nu(\comp^\nu)$}
\obj(60,120){$\compi^\nu$}
\obj(200,120)[stab]{{\includegraphics{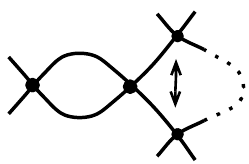}}}[0] 
\obj(166,120){$P_1$}
\obj(194,120){$P_2$}
\obj(214,143){$P_3$}
\obj(214,97){$P_4$}
\obj(160,130){$\comp$}
\obj(150,110){$\isoi_\stab(\comp)$}
\obj(245,120){$\compi$}
\mor{stabnu}{stab}[80,70]{$\nu_\stab$} 
\enddc\]}

Let $xy=0$ be a local equation of $C$ at the node $P_1$. Then $\isoi_\stab$ acts as $x\mapsto y\mapsto x$, $xy=t_1$ is the deformation of the node and $t_1=xy\mapsto yx=t_1$. The node $P_1$ is non-disconnecting and since $(\spin,\iso)$ is singularity reduced the action of $\iso$ \emph{cannot} be $\tau_1\mapsto\tau_1$. If $P_1$ were non-exceptional $\iso$ would act as $\tau_1=t_1\mapsto\tau_1=t_1$. Hence $P_1$ is exceptional, $\tau_1^2=t_1$ and $\tau_1\mapsto\pm\tau_1$. We must have $\tau_1\mapsto-\tau_1=\zeta^{\frac n2}\tau_1$, giving the contradiction
\[
1>\RT\geq \underbrace{\frac 1n \cdot\frac n2}_{P_1}+\underbrace{\frac 12}_{P_3\leftrightarrow P_4}= 1.
\]
Therefore, all cases in which an irreducible component $\stab_{i_0}$ of $\stab$ is not fixed by $\isoi_\stab$ are excluded.
\end{proof}
The proposition implies that for every irreducible component $\comp$ of $\stab$ the action of the automorphism $\isoi_\stab$ on $\CC[3g-3]_t$ restricts to an action on $H^1(\compnu,T_{\compnu}(D_j))$ and this action coincides with that of $\iso$ on $H^1(\compnu,T_{\compnu}(D_j))\subset\CC[3g-3]_\tau$ and $H^1(\compnu,T_{\compnu}(D_j))\subset\CC[3g-3]_u$. Therefore the proposition on page 28 and the arguments on page 36 of \cite{hamu1982} imply the following.
\begin{prop}\label{prop:comps}
Let $\varphi_j=\isoi^\nu_{|\compnu}$ be the induced automorphism on the normalisation $\compnu$ of the irreducible component $\comp$ of $\stab$. Then the pair $(\compnu,\varphi_j)$ is one of the following cases:
\begin{enumerate}
\item $\varphi_j=\identity_{\compnu}$, any $\compnu$
\item $\compnu$ is rational and $\ord\varphi_j=2,4$
\item $\compnu$ is elliptic and $\ord\varphi_j=2,4,3,6$
\item $\compnu$ is hyperelliptic of genus $2$ and $\varphi_j$ is the hyperelliptic involution
\item $\compnu$ is hyperelliptic of genus $3$ and $\varphi_j$ is the hyperelliptic involution
\item $\compnu$ is bielliptic of genus $2$, i.e. it is a double cover of an elliptic curve, and $\varphi_j$ is the associated involution
\end{enumerate}
\end{prop}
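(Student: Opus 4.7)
The plan is to reduce the classification to the analogous statement for $\mgbar$ proved by Harris and Mumford, by showing that on the subspace of component deformations the action of $\iso$ on $\CC[3g-3]_u$ agrees coordinate-wise with that of $\isoi_\stab$ on $\CC[3g-3]_t$. First I would note that by the preceding proposition every irreducible component $\comp$ of $\stab$ is fixed by $\isoi_\stab$, so $\varphi_j=\isoi^\nu_{|\compnu}$ is a well-defined automorphism of the pointed smooth curve $\pnorm$, and induces a linear action on $H^1\left(\compnu,T_{\compnu}(D_j)\right)$.

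The second step is the key comparison. For any coordinate $t_i$ belonging to the chosen coordinate system of $H^1\left(\compnu,T_{\compnu}(D_j)\right)$, the coordinate changes $\tau_i^2=t_i$ or $\tau_i=t_i$ coming from the universal deformation and $u_i=\tau_i^k$ ($k\in\{1,2,4\}$) coming from Prill's theorem affect only coordinates corresponding to nodes; on component coordinates one has $t_i=\tau_i=u_i$. Hence the block of the representation of $\iso$ on the subspace $H^1\left(\compnu,T_{\compnu}(D_j)\right)\subset\CC[3g-3]_u$ is literally the block of $\isoi_\stab$ on the same subspace of $\CC[3g-3]_t$, and its contribution to $\RT$ is the partial Reid--Shepherd-Barron-Tai sum of $\varphi_j$ acting on $H^1\left(\compnu,T_{\compnu}(D_j)\right)$, depending only on $\pnorm$ and $\varphi_j$.

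The third step is to invoke the Harris--Mumford analysis. On p.~28 of~\cite{hamu1982} all pairs $(C^\nu,\varphi)$ of a smooth curve with a non-trivial automorphism are listed together with their action on the space of infinitesimal deformations of the corresponding pointed curve, and on p.~36 the resulting partial \rsbt{} sums are computed. Their computation shows that as soon as $(\compnu,\varphi_j)$ falls outside the six cases (i)--(vi) of the statement, the partial \rsbt{} sum coming from $H^1\left(\compnu,T_{\compnu}(D_j)\right)$ alone is already at least $1$, contradicting $0<\RT<1$. Since this bound is derived entirely from the component block, which coincides for $\iso$ and $\isoi_\stab$, the classification applies verbatim. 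Summing over all components $\comp$, every pair $(\compnu,\varphi_j)$ must therefore lie on the list.

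The main obstacle is not a new geometric idea but the combinatorial verification of the numerical bounds in the case analysis. Diagonalising $\varphi_j$ on each $H^1\left(\compnu,T_{\compnu}(D_j)\right)$ and checking that every excluded case forces a partial \rsbt{} sum $\ge 1$ is precisely the content of~\cite[pp.~28, 36]{hamu1982}, so the real work here is to ensure that the coordinate identification $t_i=\tau_i=u_i$ on component directions is correct and that no interaction with node coordinates perturbs the Harris--Mumford bookkeeping. Once that identification is in place, the proposition follows directly from their list.
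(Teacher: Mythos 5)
Your argument is precisely the paper's: it uses the preceding proposition that every irreducible component is fixed by $\isoi_\stab$, observes that on coordinates belonging to $H^1\left(\compnu,T_{\compnu}(D_j)\right)$ the substitutions defining $\tau$- and $u$-coordinates are the identity so the block of $\iso$ equals the block of $\isoi_\stab$, and then cites the classification on p.~28 together with the partial \rsbt{} computations on p.~36 of Harris--Mumford. This is exactly how the paper derives the proposition, so no further comparison is needed.
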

Now the possibility that $\isoi_\stab$ interchanges a pair of nodes can be excluded.
\begin{prop}
$\isoi_\stab$ fixes all nodes.
\end{prop}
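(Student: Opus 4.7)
The plan is to argue by contradiction: assume $\isoi_\stab$ interchanges a pair of nodes $P_{i_0}, P_{i_1}$ (so by Proposition~\ref{prop:pairnodes} every other node is fixed), and derive that the \rsbt{} sum must be at least $1$, contradicting the standing assumption. The first step is structural. Since the previous proposition guarantees that every irreducible component of $\stab$ is fixed by $\isoi_\stab$, the two interchanged nodes must join the same unordered pair of components (possibly a component with itself). Moreover $\isoi_\stab$ preserves each of the four classes $\ET, \DC, \EX, \Delta$ of Notation~\ref{not:part}, so both interchanged nodes lie in the same class.

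Two of the four classes can be ruled out on structural grounds. If both interchanged nodes were elliptic tail nodes, each would be the unique attaching node of a distinct elliptic tail, and swapping them would swap those tails --- contradicting that every component is fixed. If both were disconnecting, then either they connect two distinct components $\comp \neq \compi$, in which case they form a $2$-cycle in the dual graph of $\stab$ and neither is disconnecting, or they are both self-intersections of a single component, in which case neither is disconnecting either. Thus the two interchanged nodes must lie in $\EX\spin \cup \Delta\spin$.

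The second step is to compute the contribution of the interchanged pair to the \rsbt{} sum. Since $u_{i_j} = \tau_{i_j}$ for nodes in $\EX \cup \Delta$, the action on $\CC_{u_{i_0}} \oplus \CC_{u_{i_1}}$ is precisely the matrix $B$ appearing in the proof of Proposition~\ref{prop:pairnodes}, with $B^2 = c_{i_0}c_{i_1}\cdot\1$. The singularity-reduced hypothesis of Proposition~\ref{prop:singred} forces $c_{i_0}c_{i_1}\neq 1$, so writing $c_{i_0}c_{i_1} = \zeta^{2l}$ with $1\leq l < n/2$, the pair already contributes
\[
\frac{l}{n}+\frac{l+n/2}{n}=\frac{1}{2}+\frac{2l}{n}>\frac{1}{2}
\]
to the \rsbt{} sum.

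The hardest step --- and the one that most closely mirrors the calculations in \cite[pp.~34--36]{hamu1982} --- is to combine this with the contribution forced by the component(s) on which the swap takes place. Let $\comp$ be a component carrying branches of both $P_{i_0}$ and $P_{i_1}$; then $\varphi_j = \isoi_\stab|_{\compnu}$ is non-trivial and falls into one of the cases (ii)--(vi) of Proposition~\ref{prop:comps}. Running through those normal forms, I would check in each case that the induced action of $\varphi_j$ on $H^1(\compnu,T_{\compnu}(D_j))$ contributes at least $\tfrac{1}{2}$ to the \rsbt{} sum, so that together with Step~2 the total exceeds $1$. The main obstacle is that for a few small configurations $H^1(\compnu,T_{\compnu}(D_j))$ can vanish and no contribution from $\comp$ itself is available; in those degenerate cases I would use the hypothesis $g\geq 4$, which forces either a second component whose fixed-point data supplies the missing $\geq \tfrac{1}{2}$, or a geometry incompatible with the existence of an interchanged pair at all. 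Either alternative yields the required contradiction, and hence $\isoi_\stab$ fixes every node.
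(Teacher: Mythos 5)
Your Step~1 (the interchanged pair must lie in $\EX\spin\cup\Delta\spin$) and Step~2 (the pair contributes $\tfrac12+\tfrac{2l}{n}$) are correct and agree with the paper up to normalization. The gap is in Step~3, and it is genuine. You assert that for a component $\comp$ carrying branches of both interchanged nodes, the action of $\varphi_j$ on $H^1(\compnu,T_{\compnu}(D_j))$ contributes at least $\tfrac12$; this is false. The paper's case analysis shows that when $\ord\varphi_j=4$ the component contribution can be exactly $\tfrac14$ (case~(b): $\compnu$ elliptic with $j$-invariant $1728$), and in case~(a) the numerical bounds even force the action on $H^1(\compnu,T_{\compnu}(D_j))$ to be \emph{trivial}---here the contradiction is not numerical at all, but geometric: a trivial action would mean the order-$4$ automorphism interchanging $\{P_1^\pm\}\leftrightarrow\{P_2^\pm\}$ deforms to every deformation of the four-pointed curve $(\compnu,D_j)$, which is impossible because the general four-pointed rational or elliptic curve has no such automorphism. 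Your anticipated failure mode ($H^1$ vanishing) is the wrong one: $H^1$ is nonzero there, and the action is trivial.

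A related omission is that you never distinguish exceptional from non-exceptional interchanged nodes. The paper shows, using the argument on p.~37 of \cite{hamu1982}, that $\ord B=\ord B_\stab$ already yields a contradiction, and therefore the interchanged nodes must be exceptional with $\ord B = 2\ord B_\stab$. This doubling of the order is what \emph{weakens} the pair's contribution (e.g.\ from $\tfrac12+\tfrac12$ down to $\tfrac12+\tfrac14$), and it is precisely why the component analysis becomes delicate and why the blanket $\geq\tfrac12$ bound cannot work. Your fallback ("a geometry incompatible with the existence of an interchanged pair") points in the right direction but would have to be spelled out as the deformation-theoretic argument above; as written, Step~3 does not close the proof.
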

\begin{proof}
Assume that $\isoi_\stab$ interchanges the nodes $P_1$ and $P_2$. Since $\isoi_\stab$ fixes all components there are only the following two possibilities:
{\scriptsize\[\begindc{0}[1]
\obj(-50,100)[]{(a)}[1]
\obj(0,90)[anu]{{\includegraphics{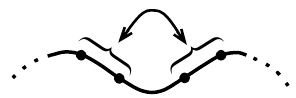}}}[1] 
\obj(50,80)[]{$\compnu$}[1] 
\obj(-22,81)[]{$P_1^+$}[1]
\obj(-10,75)[]{$P_1^-$}[1]
\obj(26,82)[]{$P_2^+$}[1]
\obj(12,75)[]{$P_2^-$}[1]

\obj(0,20)[a]{{\includegraphics{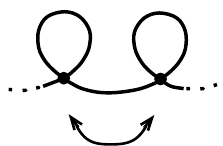}}}[1] 
\obj(-13,12)[]{$P_1$}[1] 
\obj(15,12)[]{$P_2$}[1]
\obj(40,20)[]{$\comp$}[1]  
\mor{anu}{a}[22,25]{$\nu_\stab$}[1,0] 

\obj(100,100)[]{(b)}[1]  
\obj(150,90)[bnu]{{\includegraphics{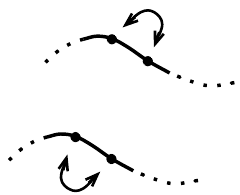}}}[1] 
\obj(192,95)[]{$\compnu$}[1] 
\obj(183,67)[]{$\compinu$}[1] 
\obj(146,100)[]{$P_1^+$}[1]
\obj(140,85)[]{$P_1^-$}[1]
\obj(157,93)[]{$P_2^+$}[1]
\obj(155,77)[]{$P_2^-$}[1]

\obj(150,20)[b]{{\includegraphics{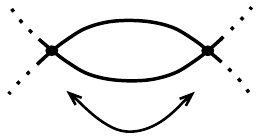}}}[1] 
\obj(129,18)[]{$P_1$}[1] 
\obj(173,18)[]{$P_2$}[1]
\obj(194,40)[]{$\comp$}[1]  
\obj(195,12)[]{$\compi$}[1]  
\mor{bnu}{b}[25,22]{$\nu_\stab$}[1,0] 
\enddc\]}

Here $(\compnu,\isoi^\nu_{|\compnu})$ and $(\compinu,\isoi^\nu_{|\compinu})$ are of the types (ii)-(vi) of Proposition~\ref{prop:comps}. In both cases $P_1$ and $P_2$ are non-disconnecting ($u_i=\tau_i$, $i=1,2$). The action of $\iso$ on $\CC_{u_1}\oplus\CC_{u_2}$ is $B=\left(\begin{smallmatrix}
0&\alpha_1\\ \alpha_2&0
\end{smallmatrix}\right)$ for appropriate non-zero scalars $\alpha_i$, $\alpha_1\alpha_2\neq 1$, since $(\spin,\iso)$ is singularity reduced and $\alpha_1\alpha_2=\zeta^{l\frac{2n}{\ord B}}$ for an appropriate $1\leq l<\frac{\ord B}{2}$. As in the proof of Proposition~\ref{prop:pairnodes} the eigenvalues are $\zeta^{l\frac{n}{\ord B}}$ and $\zeta^{\frac n2+l\frac{n}{\ord B}}$ and give a contribution to the \rsbt{} sum of $\frac 12+\frac{2l}{\ord B}$.

Since $\isoi_\stab$ is induced by the automorphism $\iso$ either both nodes are exceptional ($\tau_i^2=t_i$, $i=1,2$) or both are non-exceptional ($\tau_i=t_i$, $i=1,2$). In case $P_1, P_2\in\Delta$ the action of $\isoi_\stab$ on $\CC_{t_1}\oplus\CC_{t_1}$ is given by $B_\stab=B$. If $P_1, P_2\in\EX$ the action of $\isoi_\stab$ on $t_1$ and $t_2$ is given by $B_\stab=\left(\begin{smallmatrix}
0&\alpha_1^2\\ \alpha_2^2&0
\end{smallmatrix}\right)$. If $\ord B=\ord B_\stab$ the argument on page 37 of~\cite{hamu1982} gives a contradiction. Hence $P_1$ and $P_2$ are exceptional and $\ord B=2\ord B_\stab$. Note that the order of $B_\stab$ is even.

Consider case (a) and set $\varphi_j=\isoi^\nu_{|\compnu}$ and $n_j=\ord\varphi_j\in\{2,4,3,6\}$. Then $\ord B_\stab$ divides $n_j$, $n_j$ is even and 
\begin{align*}
&\ 1>\RT\geq \frac 12+\frac{2l}{\ord B}\geq\frac 12+\frac{2l}{2n_j}\\
\Longrightarrow &\ n_j>2l\geq 2\\
\Longrightarrow &\ n_j=4, 6
\end{align*} 
If $n_j=6$ Harris and Mumford prove that the action of $\isoi_\stab$ on $H^1(\compnu,T_{\compnu}(D_j))$ contributes $\frac 13$, hence
\[
1>\RT\geq \underbrace{\frac 12+\frac{2}{12}}_{P_1 \text{ and } P_2}+\underbrace{\frac 13}_{\compnu}=1.
\]
Therefore, $n_j=4$ and $\compnu$ is rational or elliptic and has at least the four marked points $P_1^\pm$, $P_2^\pm$. Then $\dim H^1(\compnu,T_{\compnu}(C_j))\geq 1$ and the action of $\isoi_\stab$ on this space contributes $\frac \kappa 4$ for an appropriate non-negative integer $\kappa$. Since $1>\RT\geq \frac 12+\frac{2}{8}+\frac \kappa 4$ this action has to be trivial. This in turn means that the automorphism of order $4$ interchanging $\{P_1^\pm\}\leftrightarrow\{P_2^\pm\}$ deforms to every deformation of the pointed curve $(\compnu,\{P_i^\pm\}\cap\compnu)$. But the general four-pointed rational or elliptic curve does not have an automorphism with these properties. Hence case (a) is impossible.

In case (b) set $\varphi_j=\isoi^\nu_{|\compnu}$, $\varphi_{j'}=\isoi^\nu_{|\compinu}$, $n_j=\ord\varphi_j\in\{2,4,3,6\}$, $n_{j'}=\ord\varphi_{j'}\in\{2,4,3,6\}$ and $\bar{n}=\lcm(n_j,n_{j'})\in\{2,3,4,6,12\}$. Then $\bar{n}=\ord \isoi^\nu_{\compnu\cup\compinu}$ and $\ord B_\stab$ divides $\bar n$.
\begin{align*}
&\ 1>\RT\geq \frac 12+\frac{2l}{\ord B}\geq\frac 12+\frac{1}{\ \bar n\ }\\
\Longrightarrow &\ \bar n=4,6,12
\end{align*} 
If $\bar n=6$ or $12$ Harris and Mumford calculate the contribution of the action on $H^1(\compnu,T_{\compnu}(D_j))$ and $H^1(\compinu,T_{\compinu}(D_{j'}))$, which leads to a contradiction in our case too. Hence $\bar n=4$ and wlog $n_j=4$ and $n_{j'}=2$ or $4$. $\varphi_j$ interchanges $P_1^+$ and $P_2^+$. An order $4$ automorphism on a rational curve does not have points of order $2$. Therefore, by Proposition~\ref{prop:comps} $\compnu$ is elliptic with $j$-invariant $1728$. The action of $\isoi_\stab$ on $H^1(\compnu,T_{\compnu}(D_j))$ then gives at least a contribution of $\frac 14$.
\[
1>\RT\geq \underbrace{\frac 12+\frac{2}{8}}_{P_1 \text{ and } P_2}+\underbrace{\frac 14}_{\compnu}=1.
\]
Hence case (b) is also impossible and all cases where $\isoi_\stab$ interchanges a pair of nodes are excluded.
\end{proof}
The next step is to refine Proposition~\ref{prop:comps}.
\begin{prop}\label{prop:compdet}
Let $\comp$ be an irreducible component of $\stab$ with normalization $\compnu$, $D_j$ the divisor of the marked points $\{P_i^\pm\}\cap\compnu$ and set $\varphi_j=\isoi^\nu_{|\compnu}$. Then $(\compnu,D_j,\varphi_j)$ is of one of the following types and the contribution to the \rsbt{} sum of the action of $\iso$ on $H^1(\compnu,T_{\compnu}(D_j))\subset\CC[3g-3]_u$ is at least $w_j$.

\begin{description}
\item[\textnormal{\textit{Identity}}] $\varphi_j=\identity_{\compnu}$, any $(\compnu,D_j)$, $w_j=0$
\item[\textnormal{\textit{Elliptic tail}}] $\compnu$ is elliptic, $D_j$ is of the form $D_j=P_1^+$ and $P_1^+$ is fixed by~$\varphi_j$.
\begin{description}
\item[\textnormal{\textit{order 2}}] $\ord\varphi_j=2$, $w_j=0$
\item[\textnormal{\textit{order 4}}] $\compnu$ has $j$-invariant $1728$, $\ord\varphi_j=4$, $w_j=\frac 12$
\item[\textnormal{\textit{order 3}}] $\compnu$ has $j$-invariant $0$, $\ord\varphi_j=3$, $w_j=\frac 13$
\item[\textnormal{\textit{order 6}}] $\compnu$ has $j$-invariant $0$, $\ord\varphi_j=6$, $w_j=\frac 13$
\end{description}
\item[\textnormal{\textit{Elliptic ladder}}] $\compnu$ is elliptic, $D_j$ is of the form $D_j=P_1^++P_2^+$, $P_1^+$ and $P_2^+$ are fixed by $\varphi_j$.
\begin{description}
\item[\textnormal{\textit{order 2}}] $\ord\varphi_j=2$, $w_j=\frac 12$
\item[\textnormal{\textit{order 4}}] $\compnu$ has $j$-invariant $1728$, $\ord\varphi_j=4$, $w_j=\frac 34$
\item[\textnormal{\textit{order 3}}] $\compnu$ has $j$-invariant $0$, $\ord\varphi_j=3$, $w_j=\frac 23$
\end{description}
\item[\textnormal{\textit{Hyperelliptic tail}}] $\compnu$ has genus $2$, $\varphi_j$ is the hyperelliptic involution, $D_j$ is of the form $D_j=P_1^+$, $P_1^+$ is fixed by $\varphi_j$ and $w_j=\frac 12$.
\end{description}
\end{prop}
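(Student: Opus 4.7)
The plan is a case-by-case analysis based on the list in Proposition~\ref{prop:comps}. For each pair $(\compnu,\varphi_j)$ arising there I would enumerate the admissible configurations of marked points $D_j$, and for each compute the eigenvalues of $\varphi_j$ on $H^1(\compnu,T_{\compnu}(D_j))$ to extract the weight $w_j$. The starting point is the observation that, by the previous proposition, $\isoi_\stab$ fixes every node of $\stab$; hence each preimage $P_i^\pm\in D_j$ is either fixed by $\varphi_j$, or else $\varphi_j$ swaps $P_i^+$ with $P_i^-$, which occurs only when $P_i$ is an irreducible node both of whose branches lie on $\comp$.

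For a fixed admissible configuration the action of $\varphi_j$ on $H^1(\compnu,T_{\compnu}(D_j))$ can be read off from the standard short exact sequence relating pointed to unpointed deformations,
\[
0\to H^0(T_{\compnu})\to \bigoplus_{P\in D_j}T_P\compnu\to H^1\bigl(\compnu,T_{\compnu}(D_j)\bigr)\to H^1(\compnu,T_{\compnu})\to 0,
\]
which reduces the computation to (i) the local action of $\varphi_j$ on each tangent space $T_P\compnu$, a character of its cyclic stabiliser, and (ii) the induced action on $H^1(\compnu,T_{\compnu})$. The latter has been computed by Harris and Mumford on pages~28 and~35--36 of~\cite{hamu1982}, and I would import those eigenvalues and combine them with the local tangent characters to obtain the full spectrum.

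The case analysis then runs as follows. Rational components (Case~(ii) of Proposition~\ref{prop:comps}) are excluded because stability forces $\deg D_j\geq 3$ while a non-trivial automorphism of $\PP[1]$ has only two fixed points, so at least one swapped pair of irreducible-node preimages must occur; by Proposition~\ref{prop:pairnodes} this pair contributes at least $\tfrac12$ to the \rsbt{} sum, which combined with the nontrivial action on $H^1$ forces the total above~$1$. Hyperelliptic genus-$3$ (Case~(v)) and bielliptic genus-$2$ components (Case~(vi)) are ruled out by the Harris-Mumford formulas for the action on $H^1(\compnu,T_{\compnu})$, which already push the \rsbt{} sum past~$1$. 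For elliptic components (Case~(iii)) I would enumerate by $\ord\varphi_j\in\{2,3,4,6\}$, for which the fixed-point counts on $E$ are $4,3,2,1$; combined with the constraint $w_j<1$ and the cost of any swapped pair, the surviving configurations are exactly the four ``Elliptic tail'' and three ``Elliptic ladder'' entries, with weights that follow from the exact sequence above using the known action on $H^1(E,T_E)$. For genus-$2$ hyperelliptic components (Case~(iv)) the marked point must be Weierstrass; the hyperelliptic involution acts trivially on $H^1(\compnu,T_{\compnu})$ (it lies in the centre of $\aut\compnu$ and acts trivially on $M_2$) and as $-1$ on the tangent space at the Weierstrass point, yielding eigenvalues $-1,1,1,1$ and $w_j=\tfrac12$; the case $\deg D_j\geq 2$ is excluded because adding a second fixed marked point would push $w_j$ past~$1$.

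The main obstacle I anticipate is the bookkeeping needed to transfer the $t$-coordinate calculations of Harris-Mumford to our $u$-coordinates. For node-coordinates the relation $u_j=t_j^2$ (if $P_j\in\ET$), $u_j=t_j$ (if $P_j\in\DC\cup\Delta$), or $u_j^2=t_j$ (if $P_j\in\EX$) affects both the eigenvalues and the choice of primitive $n$-th root $\zeta$ of $1$ for which the \rsbt{} inequality needs to be tested. In contrast, for coordinates in $H^1(\compnu,T_{\compnu}(D_j))$ one has $u_i=\tau_i=t_i$, so the Harris-Mumford eigenvalue computations transfer verbatim. I must also verify that each claimed $w_j$ is a genuine minimum over the choices of a lift $\iso\in\aut\spin$ of $\isoi_\stab$ (different lifts can multiply individual tangent characters by~$\pm 1$, via the inessential factor described in Remark~\ref{rem:iness}) and of the primitive root $\zeta$, rather than merely an upper bound.
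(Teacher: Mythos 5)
Your overall architecture --- running through the cases of Proposition~\ref{prop:comps}, using that every node is fixed so that each $P_i^\pm$ is either fixed by $\varphi_j$ or swapped with its partner, and extracting eigenvalues on $H^1(\compnu,T_{\compnu}(D_j))$ (your equivariant four-term sequence is a clean substitute for the paper's ad hoc deformation arguments there) --- matches the paper. But the two genuinely spin-specific mechanisms are missing. First, the $\frac 12$ attached to a swapped pair $P_1^+\leftrightarrow P_1^-$ does \emph{not} come from Proposition~\ref{prop:pairnodes}, which concerns two distinct nodes interchanged by $\isoi_\stab$; here $P_1$ is a single fixed node whose branches are swapped, so $t_1=xy\mapsto yx=t_1$ and the contribution in the $\mgbar$-picture is zero. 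The $\frac 12$ comes from the singularity-reduced hypothesis of Proposition~\ref{prop:singred}: since $P_1$ is non-disconnecting and $\prod c\neq 1$ is forced, one must have $\tau_1\mapsto-\tau_1$, hence $P_1$ exceptional and $u_1\mapsto-u_1$. You never invoke singularity-reducedness, and without it a swapped pair costs nothing, so your exclusions of the rational case and of the translation-type elliptic automorphisms collapse. (A related slip: the bielliptic genus-$2$ and hyperelliptic genus-$3$ involutions contribute only $\frac 12$ on the \emph{unpointed} $H^1(\compnu,T_{\compnu})$; their exclusion needs the extra $-1$-eigenvalues at the fixed marked points, giving $\dim\eig(\varphi_j,-1)=1+\deg D_j\geq 2$.)

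Second, even granting the $\frac 12$ per swapped pair, your exclusion of rational components fails precisely when $\deg D_j=3$, i.e.\ when $\comp$ is a singular elliptic tail: there $\dim H^1(\compnu,T_{\compnu}(D_j))=0$, so there is no ``nontrivial action on $H^1$'' to add and the total stays at $\frac 12$. The paper rules this configuration out by a different argument: restricting the universal deformation of $\spin$ to the one-parameter locus smoothing the irreducible node $P_1$, the line bundle deforms to a theta characteristic on the resulting smooth elliptic tail and the branch swap deforms to the elliptic involution, so $\iso$ must act trivially on $\tau_1$ --- contradicting $\tau_1\mapsto-\tau_1$. This step, together with the singularity-reduced input above, is exactly where the proof departs from Harris--Mumford, and it cannot be recovered by eigenvalue bookkeeping alone; as it stands your proposal would leave singular elliptic tails in the list.
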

\begin{proof}
Since all components of $\stab$ are fixed by $\isoi_\stab$, the action of $\isoi_\stab$ on $\CC[3g-3]_t$ restricts to an action on $H^1(\compnu,T_{\compnu}(D_j))$ and this action coincides with that of $\iso$ on $H^1(\compnu,T_{\compnu}(D_j))\subset\CC[3g-3]_u$. Moreover, since all nodes of $\stab$ are fixed by $\isoi_\stab$ a point $P_i^\pm\in\compnu$ is either a fixed point of $\varphi_j$ or $P_i^+$ and $P_i^-$ both lie in $\compnu$ and are interchanged by $\varphi_j$. 

Consider the six cases of Proposition~\ref{prop:comps}. In case~(i) $\varphi_j$ is the identity, $(\compnu,D_j)$ is arbitrary and the action on $H^1(\compnu,T_{\compnu}(D_j))$ is trivial, hence $w_j=0$. In case~(ii) $\compnu$ is rational, $\ord \varphi_j=2$ or $4$ and there are at least three marked points on $\compnu$, since $\stab$ is stable. If $\ord\varphi_j=4$ there are exactly two fixed points on $\compnu$ while all other points are of order $4$, giving the contradiction $\deg D_j\leq 2$. If $\ord\varphi_j=2$ there are exactly two fixed points, all other points are of order two. Since $\deg D_j\geq 3$ there is at least one pair of marked points $P_1^+$ and $P_1^-$ on $\compnu$ which are interchanged by $\varphi_j$. Let $xy=0$ be a local equation for $\stab$ at $P_1$, then $\isoi_\stab$ acts as $t_1=xy\mapsto yx=t_1$. Therefore, $\iso$ acts as $\tau_1\mapsto\pm\tau_1$ and since $P_1$ is non-disconnecting and $(\spin,\iso)$ singularity reduced, we must have $\tau_1\mapsto-\tau_1$ and $P_1$ is exceptional. Moreover, $u_1=\tau_1\mapsto-\tau_1=-u_1$ giving a contribution of $\frac 12$ to the \rsbt{} sum. Since $1>\RT \geq (\text{number of pairs interchanged})\cdot\frac 12$, the pair $P_1^\pm$ is the only pair of marked points on $\compnu$ which are interchanged and $\deg D_j\leq 4$. If $\deg D_j=4$ the action on $H^1(\compnu,T_{\compnu}(D_j))$ gives a contribution of at least $\frac 12$, since the order $2$ automorphism $\varphi_j$ does not deform to the general four-pointed rational curve. Hence we get the contradiction $1>\RT\geq \frac 12+\frac 12$ and $\deg D_j$ has to be $3$, implying that $\comp$ is a singular elliptic tail. Consider the restriction of the local universal deformation $(\calX\rightarrow\CC[3g-3]_\tau,\calL,\calB)$ to the locus $\{\tau_k=0|k\neq 1\}$, i.e. the singular elliptic tail $\comp$ is smoothed to a smooth elliptic tail $\comp'$. But then the line bundle over the singular elliptic tail deforms to a theta characteristic on $\comp'$, the automorphism $\varphi_j$ deforms to the elliptic involution and hence $\iso$ acts trivially on $\tau_1$, in contradiction to $\tau_1\mapsto-\tau_1$. Therefore, the case~(ii) is excluded. 
{\scriptsize
\[\begindc{0}[1]
\obj(0,35)[]{central fibre}[1] 
\obj(120,35)[]{nearby fibre}[1] 
\obj(0,0)[central]{{\includegraphics{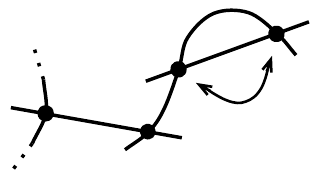}}}[1] 
\obj(120,0)[near]{{\includegraphics{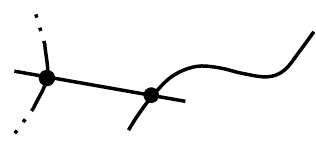}}}[1] 
\obj(-1,11){$P_1^+$}
\obj(47,6){$\comp$}
\obj(39,25){$P_1^-$}
\enddc\]}

In case~(iii) $\compnu$ is elliptic, $\ord\varphi_j=2,4,3$ or $6$ and $\deg D_j\geq 1$. Let us first consider the case that $\varphi_j$ is a translation. Then $\varphi_j$ has no fixed points, $D_j=P_1^++P_1^-+\dotsb+P_k^++P_k^-$, $P_i^+\leftrightarrow P_i^-$ and hence $\varphi_j$ has to be of order $2$. Therefore, for every $i=1,\dotsc,k$ the action of $\isoi_\stab$ is $t_i=xy\mapsto yx=t_i$, where $xy=0$ is a local equation of $\stab$ at $P_i$, and $u_i\mapsto-u_i$ as above, giving a contribution of $\frac 12$ to the \rsbt{} sum. Since $\stab$ is connected and has at least genus $4$ there are at least three such pairs of marked points on $\compnu$, giving a contribution of at least $\frac 32$ and this case is excluded. Hence $\varphi_j$ has a fixed point. 

In case $\ord\varphi_j=2$ consider the case where there is a pair of marked points such that $P_1^+\leftrightarrow P_1^-$. Again this pair contributes $\frac 12$ and, therefore, it is the only one. But the elliptic involution does not deform to the general $l$-pointed curve if $l\geq 2$, hence the action on $H^1(\compnu,T_{\compnu}(D_j))$ is non-trivial and contributes at least $\frac 12$, giving a \rsbt{} sum of at least $1$. Thus, all marked points are fixed points of the elliptic involution $\varphi_j$ and $1\leq\dim H^1(\compnu,T_{\compnu}(D_j))=\deg D_j\leq 4$. The fixed point locus of the action of $\varphi_j$ on $H^1(\compnu,T_{\compnu}(D_j))$ is one-dimensional, since we may deform the elliptic curve with the marked point $P_1^+$, but then the remaining marked points are fixed, since we want the elliptic involution to deform. Hence, the action of $\varphi_j$ on $H^1(\compnu,T_{\compnu}(D_j))$ has $-1$ as an eigenvalue of multiplicity $\deg D_j-1$ and contributes $(\deg D_j -1)\cdot\frac 12$ to the \rsbt{} sum. So either $\deg D_j=1$ and we are in the elliptic tail case of order $2$ with $w_j=0$ or $\deg D_j=2$ and we are in the elliptic ladder case of order $2$ with $w_j=\frac 12$.

In case $\ord\varphi_j=4$, the $j$-invariant of $\compnu$ is $1728$ and there are two fixed points and one pair of order two points on $\compnu$. Suppose there is a pair of marked points such that $P_1^+\leftrightarrow P_1^-$. Then the action of $\iso$ on $u_1$ contributes $\frac 12$ as above. Moreover, there exists a one-dimensional deformation of $(\compnu,D_j)$ to which $\varphi_j^2$, the elliptic involution, deforms, hence $-1$ is an eigenvalue of the action of $\varphi_j$ on $H^1(\compnu,T_{\compnu}(D_j))$, also contributing $\frac 12$. Hence, all marked points on $\compnu$ have to be fixed points of $\varphi_j$ and $1\leq\deg D_j\leq 2$. If there is only one marked point, we are in the elliptic tail case of order $4$ and the action on $H^1(\compnu,T_{\compnu}(D_j))$ contributes $w_j=\frac 12$. If we have two marked points, we are in  the elliptic ladder case of order $4$. Then $\dim H^1(\compnu,T_{\compnu}(D_j))=2$ and the action of $\varphi_j^2$ has eigenvalues $1$ and $-1$. Hence $\varphi_j$ has eigenvalues $-1$ and $\xi_4$, a primitive fourth root of $1$, giving a contribution of $w_j=\frac 34$.

In case $\ord\varphi_j=3$, $\compnu$ has $j$-invariant $0$ and $\varphi_j$ has three fixed points and no points of order $2$. The action on $H^1(\compnu,T_{\compnu}(D_j))$ does not have the eigenvalue $1$, since the order three automorphism does not deform to any deformation of the pointed elliptic curve. Therefore, all eigenvalues are primitive third roots of $1$ and the contribution is at least $\frac 13\dim H^1(\compnu,T_{\compnu}(D_j))=\frac 13\deg D_j$. Hence three marked points are not possible, one marked point gives the elliptic tail case ($w_j=\frac 13$) and two marked points give the elliptic ladder case ($w_j=\frac 23$).

In case $\ord\varphi_j=6$, $\compnu$ has $j$-invariant $0$ and $\varphi_j$ has one fixed point and one pair of order two points. If there is a pair of marked points such that $P_1^+\leftrightarrow P_1^-$ the action on $u_1$ contributes $\frac 12$. Moreover, the action of $\varphi_j^3$, the elliptic involution, on $H^1(\compnu,T_{\compnu}(D_j))$ has $1$ and $-1$ as eigenvalues, while $\varphi_j$ does not deform to any deformation of the pointed elliptic curve. Hence, $\varphi_j$ has a primitive third root and a non-trivial sixth root of $1$ as eigenvalues, alltogether giving a contribution of $\frac 12+\frac 13+\frac 16=1$. Therefore, we are left with one marked point, which is fixed by $\varphi_j$, giving the elliptic tail case. The action on $H^1(\compnu,T_{\compnu}(D_j))$ contributes at least $\frac 13$, since the action of $\varphi_j^3$ on this space is trivial.

Consider now the remaining cases~(iv)-(vi). Either $\compnu$ is hyperelliptic of genus $2$ or $3$ , hence there is a $2:1$-map $\compnu\rightarrow\PP[1]=B$, which is ramified in $r=6$ resp. $r=8$ points, or $\compnu$ is bielliptic of genus $2$, hence there is a $2:1$-map $\compnu\rightarrow B$, which is ramified in $r=2$ points and the base curve $B$ is an elliptic curve. In all cases $\varphi_j$ is the associated involution to the map $\compnu\rightarrow B$ and there are $r$ fixed points, while all other points are of order $2$. Assume that there exists a pair of marked points such that $P_1^+\leftrightarrow P_1^-$. Then the action on $u_1$ contributes $\frac 12$. Moreover, the action on $H^1(\compnu,T_{\compnu}(D_j))$ is non-trivial, since $\varphi_j$ cannot deform to every deformation of the pointed curve $(\compnu,D_j)$, hence this action also contributes at least $\frac 12$ and the case is excluded. Thus, all marked points are fixed points of $\varphi_j$ and $1\leq\deg D_j\leq r$. We have
\[
\dim H^1(\compnu,T_{\compnu}(D_j))=3g(\compnu)-3+\deg D_j=\begin{cases}
3+\deg D_j & \text{ if } g(\compnu)=2\\
6+\deg D_j & \text{ if } g(\compnu)=3
\end{cases}
\]
The action of $\varphi_j$ on this space has eigenvalues $1$ and $-1$. The dimension of the eigenspace with respect to the eigenvalue $1$ is the dimension of the deformation space of the $r$-pointed curve $B$ with marked points the branch points, hence
\[
\dim \eig(\varphi_j,1)=3g(B)-3+r=\begin{cases}
3 & \text{ if } \compnu \text{ is hyperelliptic of genus }2\\
2 & \text{ if } \compnu \text{ is bielliptic}\\
5 & \text{ if } \compnu \text{ is hyperelliptic of genus }3
\end{cases}
\]
and
\begin{align*}
\dim \eig(\varphi_j,-1)&=\dim H^1(\compnu,T_{\compnu}(D_j))-\dim \eig(\varphi_j,1)\\
&=\begin{cases}
\deg D_j & \text{ if } \compnu \text{ is hyperelliptic of genus }2\\
1+\deg D_j & \text{ if } \compnu \text{ is bielliptic}\\
1+\deg D_j & \text{ if } \compnu \text{ is hyperelliptic of genus }3
\end{cases}
\end{align*}
In the latter two cases the contribution to the \rsbt{} sum is $\frac 12(1+\deg D_j)\geq 1$, hence these cases are excluded. In case $g(\compnu)=2$ and $\varphi_j$ is the hyperelliptic involution, we get $1>\RT\geq \frac 12\deg D_j$ and hence we are in the case of a hyperelliptic tail and $w_j=\frac 12$.
\end{proof}
In the next propositions we will patch together several components of the types in the above proposition.
\begin{prop}\label{prop:hyp}
The hyperelliptic tail case is impossible.
\end{prop}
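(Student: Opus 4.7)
The plan is to show that in the hyperelliptic tail situation the \rsbt{} sum of $\iso$ on $\CC[3g-3]_u$ is forced to be at least $1$, contradicting the assumption that $[\spin]$ is a non-canonical singularity. So suppose $\comp$ is a hyperelliptic tail as in Proposition~\ref{prop:compdet}: $\compnu$ has genus $2$, $\varphi_j$ is the hyperelliptic involution, and $D_j=P_1^+$ for a Weierstrass point $P_1^+$ fixed by $\varphi_j$; Proposition~\ref{prop:compdet} records the contribution $w_j=\frac 12$ from $H^1(\compnu,T_{\compnu}(D_j))$. Since $D_j$ has odd degree on $\compnu$, Remark~\ref{rem:even} forces $P_1$ to be exceptional, and since $\comp$ meets the rest of $\stab$ only at $P_1$, the node is disconnecting, so $P_1\in\ET\cup\DC$.

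Let $\compi$ denote the other component at $P_1$, set $\varphi_{j'}=\isoi^\nu_{|\compinu}$, and let $\lambda_\pm$ be the eigenvalues of $\varphi_j$ and $\varphi_{j'}$ on the tangent spaces to $\compnu$ and $\compinu$ at $P_1^\pm$ respectively. At a Weierstrass point the hyperelliptic involution acts on the tangent space as multiplication by $-1$, so $\lambda_+=-1$. With $xy=t_1$ the local equation of $P_1$, the action on $t_1$ is $\lambda_+\lambda_-=-\lambda_-$; hence the action on $u_1$ is this eigenvalue when $P_1\in\DC$ and its square when $P_1\in\ET$.

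The proof then runs through the four possibilities for $(\compinu,\varphi_{j'})$ provided by Proposition~\ref{prop:compdet}. If $\varphi_{j'}=\identity$, then $\lambda_-=1$; the alternative of $\compi$ being an elliptic tail (which would give $P_1\in\ET$) would force $\stab=\comp\cup\compi$ to have arithmetic genus $3$, contradicting $g\geq 4$; hence $P_1\in\DC$, so $u_1=t_1\mapsto-u_1$ contributes $\frac 12$, and $w_j+\frac 12=1$. If $\varphi_{j'}$ is an elliptic tail automorphism of any order, then $D_{j'}=P_1^-$ forces $\compi$ to also meet the rest of $\stab$ only at $P_1$, so $\stab=\comp\cup\compi$ again has genus $3$. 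If $\varphi_{j'}$ is an elliptic ladder automorphism, then by Proposition~\ref{prop:compdet} $w_{j'}\in\{\frac 12,\frac 23,\frac 34\}$, so $w_j+w_{j'}\geq 1$. Finally, if $\varphi_{j'}$ is a hyperelliptic tail automorphism, then $\stab=\comp\cup\compi$ consists of two genus $2$ tails, which either violates $g\geq 5$ or (for $g=4$) yields $w_j+w_{j'}=\frac 12+\frac 12=1$.

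The delicate point is the identity case, where one must rule out $P_1\in\ET$: otherwise $u_1=\tau_1^4=t_1^2$, and the eigenvalue $-1$ on $t_1$ squares to $1$, destroying the needed contribution. The remainder is bookkeeping: each non-trivial option for $\varphi_{j'}$ listed in Proposition~\ref{prop:compdet} either forces $\compi$ to be a tail (and so is eliminated by $g\geq 4$) or already contributes enough on the $\compi$-side to reach the threshold $1$.
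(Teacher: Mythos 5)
Your proof is correct and follows essentially the same route as the paper's: a case analysis on the second component $\compi$ through the node $P_1$ using Proposition~\ref{prop:compdet}, with the decisive identity case settled by observing that $P_1$ is a disconnecting exceptional node which is not an elliptic tail node, so that $u_1=\tau_1^2=t_1\mapsto -u_1$ contributes an extra $\frac 12$ to the \rsbt{} sum. Your explicit flagging of why $P_1\in\ET$ must be excluded (the square would kill the eigenvalue $-1$) is a point the paper handles implicitly by dispatching the elliptic-tail option for $\compi$ first, but the substance is identical.
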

\begin{proof}
Let $\comp$ be a hyperelliptic tail of genus $2$ of $\stab$, $\varphi_j=\isoi^\nu_{|\compnu}$ the hyperelliptic involution, $P_1$ the node where $\comp$ meets the rest of the curve and $\compi$ the second component through $P_1$. Then $\comp$ contributes $\frac 12$ to the \rsbt{} sum. $\compi$ is of one of the cases of Proposition~\ref{prop:compdet}. If $\compi$ is a hyperelliptic tail it also contributes $\frac 12$ and this gives the contradiction $1>\RT\geq \frac 12+\frac 12$. If $\compi$ is an elliptic ladder, its contribution is $\frac 12$, $\frac 23$ or $\frac 34$. In any case the \rsbt{} sum is at least $1$, excluding these cases. If $\compi$ is an elliptic tail, the curve $\stab$ has only genus $3$. Therefore, we are left with the case that $\compi$ is an identity component. Let $xy=0$ be a local equation for $\stab$ at $P_1$, then $\isoi_\stab$ acts as $x\mapsto-x$ and $y\mapsto y$, hence $t_1\mapsto-t_1$. The node $P_1$ is exceptional ($\tau_1^2=t_1$) and it is disconnecting but not an elliptic tail node ($u_1=\tau_1^2$). Therefore, $\iso$ acts as $u_1\mapsto-u_1$ giving a contribution of $\frac 12$ and $1>\RT\geq \frac 12+\frac 12$. In conclusion, the case of a hyperelliptic tail is impossible.
\end{proof}
\begin{prop}
The elliptic ladder cases are impossible.
\end{prop}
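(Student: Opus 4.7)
The plan is to argue by contradiction. Assume $\stab$ has an elliptic ladder component $\comp$, i.e.~$\compnu$ is elliptic, $D_j=P_1^++P_2^+$ with both $P_i^+$ fixed by $\varphi_j=\isoi_\stab|_{\compnu}$ of order $n_j\in\{2,3,4\}$ and ladder weight $w_j\in\{\tfrac12,\tfrac23,\tfrac34\}$. I will show $\RT\geq 1$, contradicting the assumption that $[\spin]$ is a non-canonical singularity.

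First I would pin down the local topology. Since $\comp$ meets the rest of $\stab$ at two distinct nodes $P_1,P_2$, neither is disconnecting, so $P_1,P_2\in\EX\cup\Delta$. Hence the components $\compi,\compii$ meeting $\comp$ at $P_1,P_2$ cannot be elliptic tails (else $P_i\in\ET$), and by Propositions~\ref{prop:compdet} and~\ref{prop:hyp} each is either an identity component or itself an elliptic ladder. The even-subset condition of Remark~\ref{rem:even}, applied to $\comp$, then forces $P_1,P_2$ to be of the same type, i.e.~both in $\Delta$ or both in $\EX$.

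Next I would compute the node contributions. Let $\alpha_i$ be the eigenvalue of $\varphi_j$ on $T_{P_i^+}\compnu$; because $\varphi_j$ is an order-$n_j$ automorphism of an elliptic curve fixing $P_i^+$, $\alpha_i$ is a primitive $n_j$-th root of unity. Let $\beta_i$ be the corresponding eigenvalue of $\isoi_\stab|_{\compinu}$ at $P_i^-$, so $\beta_i=1$ when $\compi$ is identity. Then $t_i\mapsto\alpha_i\beta_i\, t_i$; the induced action on $u_i$ contributes at least $\tfrac1{n_j}$ to $\RT$ if $P_i\in\Delta$ (since $u_i=t_i$) and at least $\tfrac1{2n_j}$ if $P_i\in\EX$ (since $u_i=\tau_i$ with $c_i^2=\alpha_i\beta_i$, forcing $c_i$ to have order dividing $2n_j$).

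The contradiction is then obtained by summing $w_j$, the two node contributions, and where applicable $w_{j'},w_{j''}$. The sharpest subcases occur when $\compi,\compii$ are both identity components and $P_1,P_2\in\EX$: for $n_j=2$, $\RT\geq\tfrac14+\tfrac14+\tfrac12=1$; for $n_j=3$, $\RT\geq\tfrac16+\tfrac16+\tfrac23=1$; for $n_j=4$, $\RT\geq\tfrac18+\tfrac18+\tfrac34=1$. Any other subcase (either both $P_i\in\Delta$, which doubles each node contribution, or one of $\compi,\compii$ itself an elliptic ladder contributing its own $w_{j'}\geq\tfrac12$) yields a strictly larger sum. The main obstacle is the bookkeeping: one must track the coordinate change $t_i\to\tau_i\to u_i$ carefully and verify the eigenvalue analysis of $\alpha_i,\beta_i,c_i$ in every subcase, in particular confirming that the minimum contribution $\tfrac1{2n_j}$ at exceptional nodes is actually attainable so that the inequality $\RT\geq 1$ is sharp.
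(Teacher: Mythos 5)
Your final computation---$w_j$ plus a contribution of at least $\frac{1}{2n_j}$ from each of two exceptional non-disconnecting nodes, summing to exactly $1$---is precisely the paper's, and your treatment of the listed subcases ($\Delta$-nodes, adjacent elliptic ladders) is sound. The gap is in the reduction to those subcases: the claim that ``since $\comp$ meets the rest of $\stab$ at two distinct nodes $P_1,P_2$, neither is disconnecting'' is false. If $\stab=\compi\cup\comp\cup\compii$ is arranged in a chain, with $\compi$ attached to the rest of the curve only through $\comp$, then both $P_1$ and $P_2$ are disconnecting, and nothing in the elliptic-ladder case of Proposition~\ref{prop:compdet} excludes this. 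Consequently you cannot conclude $P_1,P_2\in\EX\cup\Delta$, nor dismiss the possibility that $\compi$ or $\compii$ is an elliptic tail. The dangerous omitted case is $P_i\in\ET$ (the neighbour has arithmetic genus $1$ and only the one node $P_i$, e.g.\ an identity component of that shape): there $u_i=\tau_i^4=t_i^2$, so for $n_j=2$ the node contributes $0$ rather than your claimed minimum $\frac{1}{2n_j}$, and the inequality must be rescued by the other node, which in that configuration is forced into $\DC$ and contributes $\frac{1}{n_j}$. The case $P_i\in\DC$ (where $u_i=\tau_i^2=t_i$) happens to contribute $\frac{1}{n_j}\geq\frac{1}{2n_j}$ and so does not spoil the bound, but it too lies outside your stated case list.

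The paper runs the logic in the opposite direction: it first notes that whenever $u_1=t_1$ (covering both $P_1\in\DC$ and $P_1\in\Delta$) the node already contributes $\frac{1}{n_j}$, which together with $w_j$ gives $\RT\geq1$; this \emph{forces} $P_1$ to be exceptional and non-disconnecting, whence $P_2$ is also non-disconnecting, $\compii$ cannot be an elliptic tail, and the parity condition of Remark~\ref{rem:even} makes $P_2$ exceptional---landing exactly in your ``sharpest subcase''. To repair your argument you would need to replace the false topological claim by this deduction (and dispose of elliptic-tail neighbours separately, e.g.\ by the genus count when both neighbours are tails). Finally, your worry about whether $\frac{1}{2n_j}$ is ``attainable so that the inequality is sharp'' is beside the point: only the lower bound $\RT\geq1$ is needed for the contradiction.
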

\begin{proof}
Let $\comp$ be an elliptic ladder, i.e. $\compnu$ is elliptic with two marked points $P_1^+$ and $P_2^+$ and $\varphi_j=\isoi^\nu_{|\compnu}$ is an automorphism of order $2$, $3$ or $4$ fixing the marked points. Denote by $\compi$ resp. $\compii$ the second component through $P_1$ resp. $P_2$, these have to be identity components, elliptic tails or elliptic ladders by Propositions~\ref{prop:compdet} and \ref{prop:hyp}. Every elliptic ladder contributes $\frac 12$, $\frac 23$ or $\frac 34$ depending on whether $\ord \varphi_j=2$, $3$ or $4$. Therefore, two elliptic ladders give a contribution of at least $1$ and neither $\compi$ nor $\compii$ is an elliptic ladder. If $\compi$ and $\compii$ were both elliptic tails, the curve would have genus $3$. Hence, wlog we may assume that $\compi$ is an identity component.

Let $xy=0$ be a local equation for $\stab$ at $P_1$, then $\isoi_\stab$ acts as $x\mapsto\alpha x$ and $y\mapsto y$, where $\alpha$ is a primitive $\ord\varphi_j$th root of $1$, and $t_1\mapsto\alpha t_1$. $P_1$ is either disconnecting and exceptional ($u_1=\tau_1^2=t_1$), non-disconnecting and exceptional ($u_1=\tau_1$, $\tau_1^2=t_1$) or non-disconnecting and non-exceptional ($u_1=\tau_1=t_1$). Therefore, the action of $\iso$ on $u_1$ is 
\[
u_1\mapsto\begin{cases}
-u_1 & \text{ if } u_1=t_1 \text{ and } \ord\varphi_j=2\\
\zeta_3u_1 & \text{ if } u_1=t_1 \text{ and } \ord\varphi_j=3\\
\zeta_4u_1 & \text{ if } u_1=t_1 \text{ and } \ord\varphi_j=4\\
\zeta_4u_1 & \text{ if } u_1^2=t_1 \text{ and } \ord\varphi_j=2\\
\zeta_3u_1 \text{ or }\zeta_6u_1 & \text{ if } u_1^2=t_1 \text{ and } \ord\varphi_j=3\\
\zeta_8u_1 & \text{ if } u_1^2=t_1 \text{ and } \ord\varphi_j=4
\end{cases}
\] 
where $\zeta_k$ denotes a primitive $k$th root of $1$. The case $u_1=t_1$ is impossible, since the contribution of the action on $H^1(\compnu,T_{\compnu}(D_j))$ and $u_1$ is at least
\[
\begin{cases}
\frac 12 + \frac 12 & \text{ if }\ord\varphi_j=2\\
\frac 23 + \frac 13 & \text{ if }\ord\varphi_j=3\\
\frac 34 + \frac 14 & \text{ if }\ord\varphi_j=4
\end{cases}
\]
Hence, $P_1$ is exceptional and non-disconnecting. But if $P_1$ is non-diconnecting, the second node $P_2$ on $\comp$ is also non-disconnecting and $\compii$ cannot be an elliptic tail but has to be an identity component. Out of degree reasons $P_2$ has to be an exceptional node. Therefore, the action on $u_2$ is analogous to that on $u_1$ and the actions on $H^1(\compnu,T_{\compnu}(D_j))$, $u_1$ and $u_2$ contribute
\[
\begin{cases}
\frac 12 + \frac 14 +\frac 14 & \text{ if }\ord\varphi_j=2\\
\frac 23 + \frac 16 + \frac 16 & \text{ if }\ord\varphi_j=3\\
\frac 34 + \frac 18 + \frac 18 & \text{ if }\ord\varphi_j=4
\end{cases}
\]
In conclusion the elliptic tail case is not possible. 
\end{proof}
\begin{prop}
The elliptic tail case of order $4$ is impossible
\end{prop}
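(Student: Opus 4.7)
The plan is to argue by contradiction in the same spirit as the preceding two propositions: I would assume that some irreducible component $\comp$ of $\stab$ falls under the elliptic-tail-of-order-$4$ case of Proposition~\ref{prop:compdet}, and combine the contribution $w_j \geq \tfrac{1}{2}$ coming from $\comp$ itself with the contribution coming from the coordinate $u_1$ attached to the elliptic tail node $P_1$ in order to force $\RT \geq 1$, contradicting the standing assumption $\RT < 1$.

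First I would pin down the combinatorial type of the node $P_1$ that attaches $\comp$ to the rest of $\stab$. Since the divisor $\{P_1^+\}$ on $\compnu$ has odd degree, Remark~\ref{rem:even} forces $P_1$ to be exceptional, and because $\comp$ is an elliptic tail of $\stab$ we in fact have $P_1\in\ET\spin$; hence $\tau_1^2=t_1$ and $u_1=\tau_1^4$. Let $\compi$ denote the second component through $P_1$. By Proposition~\ref{prop:compdet}, Proposition~\ref{prop:hyp} and the two preceding propositions, $\compi$ must be either an identity component or an elliptic tail. The latter is impossible, because then $\stab=\comp\cup\compi$ would have genus $2$, contradicting $g\geq 4$. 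So $\compi$ is an identity component and $\isoi_\stab$ restricts to the identity on $\compi$.

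Next I would read off the action on the node coordinate. Choosing a local equation $xy=0$ at $P_1$ with $x$ a coordinate on the $\comp$ branch and $y$ a coordinate on the $\compi$ branch, the automorphism acts as $x\mapsto\alpha x$, $y\mapsto y$ with $\alpha$ a primitive fourth root of $1$; hence $t_1\mapsto\alpha t_1$, $\tau_1\mapsto c_1\tau_1$ with $c_1^2=\alpha$, and finally
\[
u_1=\tau_1^4\mapsto c_1^4 u_1 = \alpha^2 u_1 = -u_1.
\]
The eigenvalue $-1$ on $u_1$ contributes exactly $\tfrac{1}{2}$ to the \rsbt{} sum, so together with the contribution $w_j\geq\tfrac{1}{2}$ from $H^1(\compnu,T_{\compnu}(D_j))$ the sum is at least $\tfrac{1}{2}+\tfrac{1}{2}=1$, which is the desired contradiction.

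The only subtlety I expect is bookkeeping: I must check that the two contributions of $\tfrac{1}{2}$ really come from distinct coordinates and are not being double-counted. This is automatic because $H^1(\compnu,T_{\compnu}(D_j))$ and $\CC_{u_1}$ are distinct summands of $\CC[3g-3]_u$, so they contribute independent eigenvalues to the \rsbt{} sum. No analogue of the involved patching argument used for the hyperelliptic or elliptic ladder cases should be needed here, since the forced factor $-1$ on $u_1$ already matches the forced $\tfrac{1}{2}$ on $\comp$ without any additional input.
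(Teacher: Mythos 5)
Your proof is correct and follows essentially the same route as the paper's: identify the attaching node $P_1$ as an exceptional elliptic tail node so that $u_1=\tau_1^4=t_1^2$, observe that the second branch is an identity component so $t_1\mapsto\zeta_4 t_1$ and hence $u_1\mapsto -u_1$, and add the resulting $\tfrac12$ to the $w_j=\tfrac12$ from $H^1(\compnu,T_{\compnu}(D_j))$ to violate $\RT<1$. The only difference is that you spell out a few steps the paper leaves implicit (why $P_1$ is exceptional, why the other component types for $\compi$ are excluded), which is fine.
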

\begin{proof}
Let $\comp$ be an elliptic tail of order $4$, i.e., $\compnu$ is elliptic with one marked point $P_1^+$ and $\varphi_j=\isoi^\nu_{|\compnu}$ is of order $4$ and fixes $P_1^+$. The second component through $P_1$ cannot be an elliptic tail, since if so we would have $g=2$, hence it is an identity component and $t_1\mapsto\zeta_4 t_1$ as in the proof of the last proposition. $P_1$ is an elliptic tail node and exceptional, hence $u_1=\tau_1^4=t_1^2$ and $u_1\mapsto-u_1$, giving a contribution of $\frac 12$. Together with the contribution of $\frac 12$ of the action on $H^1(\compnu,T_{\compnu}(D_j))$, we get a \rsbt{} sum of at least $1$.
\end{proof}
\begin{prop}
The case where $\stab$ has \emph{no} elliptic tail of order $3$ or $6$ is impossible.
\end{prop}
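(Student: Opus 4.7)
Under the hypothesis that $\stab$ has no elliptic tail of order $3$ or $6$, the preceding propositions leave only two possibilities for each irreducible component $\comp$ of $\stab$: either $\isoi_\stab$ restricts to the identity on $\comp$, or $\comp$ is an elliptic tail on which $\isoi_\stab$ restricts to the elliptic involution. In particular $\isoi_\stab$ is a product of elliptic tail involutions of order $2$. The plan is to show that this forces $\iso$ to act on $\CC[3g-3]_u$ either trivially or with \rsbt{} sum $\geq 1$, each of which contradicts the standing hypothesis $0<\RT<1$.

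I would first mimic the first step of the proof of Theorem~\ref{thm:smooth}. For every elliptic tail $\stab_i$ on which $\isoi_\stab$ acts as the elliptic involution, pick a lift $(\iota_i,\isoii^{(i)})\in\aut\spin$ that equals the identity outside $\stab_i\cup E_i$; by Proposition~\ref{prop:qrsg} each such lift is a quasireflection and hence lies in the subgroup $H\subset\paut\spin$ generated by quasireflections. Composing $\iso$ with an appropriate product of these lifts yields an inessential automorphism $(\identity_\spini,\isoii')$, so on $\CC[3g-3]_u=\CC[3g-3]_\tau/H$ the automorphism $\iso$ acts precisely as this inessential element does. By Remark~\ref{rem:iness} the inessential action on $\tau$-coordinates sends $\tau_i\mapsto(-1)^{\isoii'_j+\isoii'_{j'}}\tau_i$ for every exceptional component $E_i$ whose edge $e(E_i)\in\Sigma(\spini)$ joins vertices $v_j,v_{j'}$, and is the identity on the remaining coordinates. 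The substitutions $u_i=\tau_i^4$ at elliptic-tail nodes and $u_i=\tau_i^2$ at disconnecting non-elliptic-tail nodes absorb the sign, so the resulting action on $\CC[3g-3]_u$ is non-trivial precisely at those $u_i$ corresponding to non-disconnecting exceptional nodes $P_i$ with $\isoii'_j\neq\isoii'_{j'}$.

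The crux of the argument (and the main obstacle) is a combinatorial observation on the graph $\Sigma(\spini)$: such flipped non-disconnecting edges must occur in at least pairs. Indeed, a flipped edge $e_0$ cannot be a loop, and since $e_0$ is non-disconnecting the graph $\Sigma(\spini)\setminus\{e_0\}$ remains connected, so there is a simple path from $v_j$ to $v_{j'}$ avoiding $e_0$; along it $\isoii'$ must flip an odd, hence positive, number of times, and every path-edge sits together with $e_0$ in a cycle and is therefore itself non-disconnecting and non-loop. Two flipped $u$-coordinates each contribute $\tfrac12$ to the \rsbt{} sum with respect to $\zeta=-1$, forcing the sum to be $\geq 1$. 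If instead no non-disconnecting edge of $\Sigma(\spini)$ is flipped by $\isoii'$, then $\iso$ acts trivially on $\CC[3g-3]_u$ and the \rsbt{} sum is $0$, again contradicting $0<\RT$. In either case we reach a contradiction, so $\stab$ must contain an elliptic tail of order $3$ or $6$.
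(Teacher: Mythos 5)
Your argument is correct, and it shares the paper's skeleton — under the hypothesis the action of $\iso$ on $\CC[3g-3]_u$ is diagonal with all eigenvalues $\pm 1$, hence trivial or of order $2$, and an order-$2$ action with \rsbt{} sum in $(0,1)$ would have to be a quasireflection — but it closes the argument by a genuinely different route. The paper computes directly: the $H^1$-blocks contribute trivially by Proposition~\ref{prop:compdet}, the substitution $u_i=\tau_i^4=t_i^2$ kills the sign $t_i\mapsto -t_i$ at elliptic tail nodes, and $t_i\mapsto t_i$ at all remaining nodes, so $u_i\mapsto\pm u_i$ throughout; it then dismisses the order-$2$ case in one line because, by Prill's proposition, the group acting on $\CC[3g-3]_u$ contains no quasireflections at all. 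You instead first pass to an inessential automorphism by dividing out quasireflection lifts of the elliptic tail involutions (mimicking the first step of the proof of Theorem~\ref{thm:smooth}, and legitimately, since those lifts lie in the subgroup $H$ and so do not change the action on $\CC[3g-3]_u$), and then prove by the parity argument on $\Sigma(\spini)$ that flipped non-disconnecting edges occur in pairs, so the number of $-1$ eigenvalues is $0$ or at least $2$. Your combinatorial step is in effect a self-contained re-derivation, in this special case, of the statement that no element acts as a quasireflection on the $u$-coordinates; the paper gets that for free from Prill's theorem, which makes its proof shorter, while yours is more explicit about exactly which coordinates carry the $-1$ eigenvalues. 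Both arguments are sound.
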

\begin{proof}
Suppose the action of $\isoi_\stab$ on $\stab$ only has identity components and elliptic tails of order $2$. Then the action of $\isoi_\stab$ on $H^1(\compnu,T_{\compnu}(D_j))$ is trivial for every irreducible component of $\stab$ (see Proposition~\ref{prop:compdet}). Moreover, the action of $\iso$ on a coordinate $u_i$ corresponding to an elliptic tail node is trivial, since $t_i=xy\mapsto -xy=-t_i$, $u_i=\tau_i^4=t_1^2$ and hence $u_i\mapsto u_i$. If $t_i$ corresponds to any other node $t_i\mapsto t_i$ since the components meeting at such a node are identity components. In any case $u_i\mapsto\pm u_i$ and the action of $\iso$ on $\CC[3g-3]_u$ is either the identity or is of order $2$. Then the \rsbt{} sum does not fulfil the inequality $0<\RT<1$, since if an order $2$ action fulfils this inequality it has to be a quasireflection, but no element of $\aut\spin$ acts on $\CC[3g-3]_u$ as a quasireflection. Thus, $\stab$ has to have an elliptic tail of order $3$ or $6$.
\end{proof}
\begin{proof}[Proof of the only-if-part of Theorem~\ref{thm:cansing}]
We proved, that if $(\spin,\iso)$ is singularity reduced and the action of $\iso$ on $\CC[3g-3]_u$ has \rsbt{} sum $0<\RT<1$ with respect to the primitive root $\zeta$, then the stable model $\stab$ has an elliptic tail $\comp$ of order $3$ or $6$, i.e., $\compnu$ is an elliptic curve with $j$-invariant $0$, $\isoi_\stab$ fixes the elliptic tail node and has order $3$ resp. $6$ on $\comp$. Such an automorphism lifts to the spin curve if and only if the theta characteristic $\spinii_j^\nu=\nu^*\spinii_{|\compnu}$ is trivial. Moreover, every elliptic tail together with its elliptic tail node contributes at least $\frac 23$ to the \rsbt{} sum. Hence, $\stab$ has exactly one elliptic tail of order $3$ or $6$ and $\isoi_\stab$ is the identity on the remaining components.

If $[\spin]\in\sgbar$ is a non-canonical singularity, then there exists an automorphism $\iso$ whose action on $\CC[3g-3]_u$ fulfils $0<\RT<1$ with respect to a primitive root $\zeta$. By Proposition~\ref{prop:singred} we can deform the pair $(\spin,\iso)$ to a singularity reduced pair $((\spini',\spinii',\spiniii'),(\isoi',\isoii'))$. The eigenvalues of the actions of $\iso$ and $(\isoi',\isoii')$ on the respective $u$-coordinates coincide, hence $(\isoi',\isoii')$ also fulfils $0<\frac 1n\sum_{j=0}^{3g-3}a_j'<1$ with respect to $\zeta$. Thus, $\stab'$ has an elliptic tail of order $3$ or $6$ and ${\spinii'_j}^\nu$ is trivial. In the proof of Proposition~\ref{prop:singred} we showed, that this property is not affected by the deformation. Therefore, $\stab$ has an elliptic tail with $j$-invariant $0$ and $\spinii_j^\nu$ is trivial and this finishes the proof.
\end{proof}
\begin{cor}\label{cor:bad}
Let $\iso$ be an automorphism of $\spin$ whose action on $\CC[3g-3]_u$ has \rsbt{} sum $0<\RT<1$ for an appropriate primitive root $\zeta$. Then $\stab$ has exactly one elliptic tail on which $\isoi_\stab$ acts of order $3$ or $6$ and $\isoi_\stab$ is the identity on all other components.
\end{cor}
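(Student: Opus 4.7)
The plan is to harvest this statement directly from the proof of the only-if part of Theorem~\ref{thm:cansing}, adding only one bookkeeping step at the end. First I would apply Proposition~\ref{prop:singred} to replace $(\spin,\iso)$ by a singularity reduced pair $((\spini',\spinii',\spiniii'),(\isoi',\isoii'))$; the eigenvalues on the $u$-coordinates are preserved, so the hypothesis $0<\RT<1$ still holds with respect to the same primitive root $\zeta$, and since the deformation only smooths non-disconnecting nodes, the disconnecting nodes of $\stab$ and in particular every elliptic tail together with the action of $\isoi_\stab$ on it are carried across the deformation unchanged.

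Next I would run the chain of propositions already established for singularity reduced pairs. Proposition~\ref{prop:pairnodes} and the following propositions force every irreducible component of $\stab'$ to be fixed and no pair of nodes to be interchanged; Proposition~\ref{prop:compdet} then lists the possible component types; and the subsequent propositions exclude hyperelliptic tails, elliptic ladders and elliptic tails of order $4$. The admissible components are therefore identity components, elliptic tails of order $2$, and elliptic tails of order $3$ or $6$, with the last proposition guaranteeing that at least one tail of order $3$ or $6$ actually occurs. Using the contribution estimate from the if-part, each such tail contributes at least $\frac{2}{3}$ to the \rsbt{} sum --- a third from $H^1(\compnu,T_{\compnu}(D_j))$ by Proposition~\ref{prop:compdet} and a further third from the elliptic tail node via the computation $u_1=\tau_1^4=t_1^2\mapsto\zeta_3^2 u_1$ --- so the hypothesis $\RT<1$ forces exactly one such tail.

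The main subtlety, and in my view the only real content beyond the proof of Theorem~\ref{thm:cansing}, is ruling out elliptic tails of order $2$ on the remaining components. Any such tail yields, via Proposition~\ref{prop:qrsg} and the lifting statement of Proposition~\ref{prop:aut}, a quasireflection $\iota\in\aut\spin$ acting trivially on $\CC[3g-3]_u$, so replacing $\iso$ by $\iso\circ\iota^{-1}$ preserves both the hypothesis and the action on $\CC[3g-3]_u$ while turning the order $2$ action on that tail into the identity. Iterating over every order $2$ tail and transferring the result back through the deformation of Proposition~\ref{prop:singred} yields the claim for the original $(\spin,\iso)$.
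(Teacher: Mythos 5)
Your overall route is the paper's: the corollary is harvested from the chain of propositions proved for singularity reduced pairs in the only-if part of Theorem~\ref{thm:cansing}, together with the observation that each elliptic tail of order $3$ or $6$ costs at least $\frac 23$ in the \rsbt{} sum, and Proposition~\ref{prop:singred} to carry everything back to the original pair. Up to that point the proposal matches the paper.

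The step you single out as the real content --- eliminating elliptic tails of order $2$ on the remaining components --- is indeed the delicate one, but your fix does not prove the statement as written, and in fact no argument can, because the literal statement fails. Replacing $\iso$ by $\iso\circ\iota^{-1}$ for a quasireflection $\iota$ produces a \emph{different} automorphism; it shows that $\iso$ agrees with an automorphism of the claimed shape modulo the subgroup generated by quasireflections, not that $\isoi_\stab$ itself is the identity off the distinguished tail. Concretely, take $\stab$ with two elliptic tails, one with $j$-invariant $0$ carrying the trivial theta characteristic and one general, and let $\isoi_\stab$ act with order $3$ on the first, as the elliptic involution on the second, and as the identity elsewhere; this lifts to a suitable $\spin$ by Proposition~\ref{prop:aut}, and since an order $2$ elliptic tail contributes $0$ both through $H^1(\compnu,T_{\compnu}(D_j))$ and through its node ($u_j=\tau_j^4=t_j^2\mapsto(-t_j)^2=u_j$), the \rsbt{} sum is still $\frac 23$, yet $\isoi_\stab$ is not the identity on the second tail. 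The paper elides exactly this point: the closing sentence of the only-if part of Theorem~\ref{thm:cansing} asserts the identity on the remaining components without excluding order $2$ tails, so your proposal is no worse, and your argument in fact delivers the honest conclusion --- that the action of $\iso$ on $\CC[3g-3]_u$ is nontrivial only on the two coordinates attached to the unique order $3$ or $6$ tail, equivalently that $\iso$ has the stated form up to quasireflections. That weaker version is all the application in the proof of Theorem~\ref{thm:lift} uses, since there only $\fix(M)\subset\CC[3g-3]_u$ matters. If you present this proof, state the corollary in that corrected form rather than pretending the composition with $\iota^{-1}$ leaves $\iso$ unchanged.
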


\section{Pluricanonical forms}
We will prove the following lifting result for pluricanonical forms, which is the analogue of Theorem~1 of~\cite{hamu1982}.
\begin{thm}\label{thm:lift}
Let $\widetilde S_g\rightarrow\sgbar$ be a desingularisation of $\sgbar$ and $g\geq 4$. Every pluricanonical form $\omega$ on the regular locus $\sgbar^{\text{reg}}$ extends holomorphically to $\widetilde S_g$, i.e., for any $k$
\[
\Gamma\left(\sgbar^{\text{reg}},kK_{\sgbar}\right)\cong\Gamma\left(\widetilde S_g, kK_{\widetilde S_g}\right).
\] 
\end{thm}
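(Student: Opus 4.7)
My plan is to follow the strategy of Harris and Mumford~\cite{hamu1982} for $\mgbar$, adapted to $\sgbar$ with the help of the classification of non-canonical singularities established in Section~3. The argument is local: by normality of $\sgbar$, it suffices to prove, for each $[\spin]\in\sgbar$, that every pluricanonical form on the smooth locus of an analytic neighbourhood of $[\spin]$ extends holomorphically to a resolution of that neighbourhood. Around $[\spin]$ the space $\sgbar$ is analytically isomorphic to $\CC[3g-3]_\tau/\paut\spin$, and after the coordinate change $\tau\mapsto u$ recorded in Section~3 this becomes $\CC[3g-3]_u/K$, where $H\subset\paut\spin$ is the subgroup generated by the images of the quasireflections of Proposition~\ref{prop:qrsg} and $K=\paut\spin/H$ contains no quasireflection.

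When $[\spin]$ is either a smooth point of $\sgbar$ or a canonical singularity (i.e.\ the hypothesis of Theorem~\ref{thm:cansing} fails), the extension of pluricanonical forms to any resolution is immediate from the very definition of canonical singularity. The substantive case is therefore a non-canonical singularity. Here Theorem~\ref{thm:cansing} says that $\stab$ has a smooth elliptic tail $\stab_1$ with $j$-invariant $0$ and trivial theta characteristic $\spinii_j^\nu$, and Corollary~\ref{cor:bad} provides the crucial structural input: every $M\in K$ whose \rsbt{} sum satisfies $0<\RT<1$ is induced by an $\iso\in\aut\spin$ whose image $\isoi_\stab$ is the identity on every irreducible component of $\stab$ other than $\stab_1$ and acts on $\stab_1$ with order $3$ or $6$. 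The explicit computation in the ``if'' direction of the proof of Theorem~\ref{thm:cansing} shows that such $M$ acts on $\CC[3g-3]_u$ by $u_1\mapsto\zeta_3^2 u_1$, $u_2\mapsto\zeta_3^2 u_2$, and trivially on the remaining coordinates, where $u_1$ corresponds to the exceptional component $E_1$ over the elliptic tail node and $u_2$ to the modulus of $\stab_1$.

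This is precisely the local action appearing in Harris and Mumford's analysis of $\mgbar$ at the image $[\stab]\in\mgbar$ after their quasireflection quotient by the elliptic tail involutions: the elliptic tail automorphism of order $6$ of $\stab$ descends to the same diagonal action on $(\bar t_1,t_2)=(t_1^2,t_2)$. Thus the non-canonical part of the local model of $\sgbar$ at $[\spin]$ matches the non-canonical part of the local model of $\mgbar$ at $[\stab]$, while the rest of the $K$-action on the remaining coordinates contributes only canonical singularities (by Corollary~\ref{cor:bad} together with the \rsbt{} criterion). The local extension argument of Harris and Mumford in~\cite[proof of Theorem~1]{hamu1982}, which demonstrates that the invariant pluricanonical forms have the vanishing along the bad fixed locus required for extension across the non-canonical singularity of $\mgbar$, then carries over to $\paut\spin$-invariant forms on $\CC[3g-3]_u$ in our setting.

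The main obstacle in executing this transfer is to verify that the inessential automorphisms of $\spin$, which have no counterpart in $\aut\stab$, do not create additional obstructions to extension. This is resolved by Corollary~\ref{cor:bad} itself: every inessential $\iso$ projects to $\isoi_\stab=\identity$, so cannot be among the bad elements whose \rsbt{} sum lies in $(0,1)$; hence the inessential part of $\paut\spin$ contributes, after Prill's reduction has absorbed its quasireflections, only canonical singularities, and the Harris--Mumford extension argument applies unchanged.
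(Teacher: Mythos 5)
Your overall strategy points in the right direction, but there is a genuine gap at the heart of the argument, concentrated in the sentence ``the local extension argument of Harris and Mumford \dots then carries over''. The Harris--Mumford argument is \emph{not} local at the non-canonical point. At a non-canonical singularity it is false that every invariant pluricanonical form on the smooth locus of a small neighbourhood extends to a resolution --- that is exactly what non-canonical means --- so your opening reduction (``it suffices to prove, for each $[\spin]$, that every pluricanonical form on the smooth locus of an analytic neighbourhood of $[\spin]$ extends'') cannot be carried out with a small neighbourhood at precisely the points where something needs to be proved. What Harris and Mumford actually use, and what the paper reproduces, is a neighbourhood $S$ of the \emph{entire} curve $\im\psi\cong\mgbar[1,1]$ of elliptic tails, together with a birational morphism $S\rightarrow B$ onto a smooth $B$ which is an isomorphism off a codimension-two locus $Z$, with $B\setminus Z$ contained in the regular locus. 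The form $\omega$, being defined on all of $\sgbar^{\text{reg}}$, restricts to $B\setminus Z$, extends across $Z$ because $B$ is smooth, and pulls back holomorphically to a desingularisation; holomorphy at the remaining points is then obtained from the \emph{generalised} \rsbt{} criterion, which requires exhibiting an open set $U$ containing the free locus and meeting the image of $\fix(M)$ for every bad $M$. None of this machinery appears in your proposal, and it cannot be replaced by the purely local matching of group actions that you describe.

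Two concrete pieces of work are therefore missing. First, to transport the Harris--Mumford neighbourhood to $\sgbar$ you must lift $\psi$ to a map $\widehat\psi:\mgbar[1,1]\rightarrow\sgbar$ and verify that $\pi$ is a local isomorphism along $\im\widehat\psi$; this is an explicit comparison of $\paut(\spini',\spinii',\spiniii')$ with $\paut\,\stab'$, which works precisely because the elliptic tail node is exceptional, so that the coordinate $u_1=\tau_1^4=t_1^2$ on the spin side matches the coordinate $u_1'=t_1^2$ obtained on the $\mgbar$ side after dividing out the quasireflection. One must also check that $B\setminus Z$ lands in $\sgbar^{\text{reg}}$, using that the corresponding spin curves have automorphism group $\{(\identity,\pm\identity)\}$. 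Second, for an \emph{arbitrary} non-canonical $[\spin]$ the stable model need not be of the special two-component form, so the fixed loci of the bad automorphisms do not meet such a neighbourhood directly; the paper deforms $\spin$ to spin curves that are general subject to preserving each elliptic tail $\stab_1^{(i)}$ with trivial theta characteristic, obtains neighbourhoods $S^{(i)}$ there, and then invokes the generalised \rsbt{} criterion with $U=V^0/G\cup\bigl(V/G\cap\bigcup_iS^{(i)}\bigr)$, where $V=\CC[3g-3]_u$ and $G=\paut\spin$, using Corollary~\ref{cor:bad} to check that every bad $M$ has $\fix(M)$ meeting $U$. Your observation that inessential automorphisms cause no additional harm is correct, but it does not substitute for these steps.
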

We will need the following generalisation of the \rsbt{} criterion by Harris and Mumford.
\begin{prop}\emph{[see pp.~27~f. and Appendix~1 to~\S 1 of~\cite{hamu1982}]}
Let $V=\CC[m]$, $G\subset\GL(V)$ a finite group, $\widetilde{V/G}\rightarrow V/G$ a desingularisation and $\omega$ a $G$-invariant pluricanonical form on $V$. Denote by $V^0\subset V$ the set where $G$ acts freely and by $\fix(M)\subset V$ the fixed point set of $M\in G$. Let $U\subset V/G$ be an open set containing $V^0/G$ such that for every $M\in G$ whose \rsbt{} sum fulfils $0<\RT<1$ for some primitive $\ord M$th root $\zeta$ the intersection of $U$ with the image of $\fix(M)$ in $V/G$ is non-empty. Denote by $\widetilde U\subset\widetilde{V/G}$ its preimage under the desingularisation. If $\omega$ considered as a meromorphic form on $\widetilde{V/G}$ is holomorphic on $\widetilde U$ then it is holomorphic on $\widetilde{V/G}$.
\end{prop}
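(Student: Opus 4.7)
The plan is to translate global holomorphicity on $\widetilde{V/G}$ into a divisor-by-divisor condition on orders of $\widetilde\omega$, check it via the local Reid--Tai calculation along each exceptional divisor, and use the hypothesis to handle precisely those divisors where that calculation alone is insufficient. Since $\omega$ is $G$-invariant and holomorphic on $V$, it descends to a meromorphic pluricanonical form on $V/G$ whose pullback $\widetilde\omega$ is a meromorphic section of the line bundle $K_{\widetilde{V/G}}^{\otimes k}$ on the smooth variety $\widetilde{V/G}$. Such a meromorphic section is holomorphic on all of $\widetilde{V/G}$ precisely when its order $n_E:=\operatorname{ord}_E(\widetilde\omega)$ along every prime divisor $E$ is non-negative. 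On the preimage of $V^0/G\subset U$ the resolution is an isomorphism and the descent of $\omega$ is holomorphic, so the only divisors $E$ on which $\widetilde\omega$ could have a pole are those exceptional for $\widetilde{V/G}\rightarrow V/G$; it thus suffices to exhibit, for each such $E$, at least one point where $\widetilde\omega$ is holomorphic.

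The local Reid--Tai calculation, performed at the center of each exceptional divisor, takes care of most of them. Let $E$ be exceptional with center $Z\subset V/G$, pick a preimage $v\in V$ of a generic point of $Z$, and diagonalise the action of the (non-trivial) stabiliser $H\leq G$ on $T_vV$. Then the valuation associated to $E$ singles out an element $M\in H$ together with a primitive $\ord M$-th root of unity $\zeta$, and the standard toric computation yields
\[
n_E \;=\; k\bigl(\RT-1\bigr)\;+\;\bigl(\text{order of vanishing of }\omega\text{ along }\fix(M)\text{ at }v\bigr),
\]
where the second summand is a non-negative integer because $\omega$ is holomorphic on $V$. Hence $n_E\geq 0$ automatically whenever the \rsbt{} sum of $M$ with respect to $\zeta$ is $\geq 1$, and the only divisors that can potentially contribute poles to $\widetilde\omega$ are those whose associated pair $(M,\zeta)$ satisfies $0<\RT<1$. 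For such a pair the hypothesis supplies a point $p$ in the non-empty open set $U\cap(\text{image of }\fix(M)\text{ in }V/G)$. Since $U$ is open and the image of $\fix(M)$ is irreducible, we may move $p$ into the dense stratum where the generic stabiliser is exactly $\langle M\rangle$; over that stratum the resolution is governed entirely by $\langle M\rangle$ and produces a point $\tilde p\in E\cap\widetilde U$. Because $n_E$ is a single integer read off at any point of $E$ where $\widetilde\omega$ is defined, holomorphicity at $\tilde p$ forces $n_E\geq 0$, completing the divisor-by-divisor check.

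The delicate step is the passage from ``$U$ meets the image of $\fix(M)$'' to ``$\widetilde U$ meets the specific exceptional divisor $E$'' in the case where $E$ lies over a deeper stratum strictly contained in that image. One must argue that the pair $(M,\zeta)$ attached to $E$ via the diagonalisation can be realised at a generic, rather than a special, point of the image of $\fix(M)$, so that some component of the exceptional locus of the resolution over the dense stratum specialises onto $E$. This is essentially the content of the appendix argument of~\cite{hamu1982}, and it rests on the observation that $n_E$ is a single integer attached to the prime divisor $E$; consequently, verifying holomorphicity at a single well-chosen point of $E$ is enough to conclude $n_E\geq 0$ and hence to prove the proposition.
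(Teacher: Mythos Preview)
The paper does not supply its own proof of this proposition; it is quoted from Harris and Mumford with the reference ``[see pp.~27~f.\ and Appendix~1 to \S 1 of \cite{hamu1982}]'' and used as a black box in the proof of Theorem~\ref{thm:lift}. There is therefore no in-paper argument to compare your sketch against.

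That said, your outline captures the correct architecture of the Reid--Tai type argument: reduce to an order-of-vanishing check along each exceptional divisor, dispose of those with \rsbt{} sum $\geq 1$ by the standard discrepancy computation, and for the remaining ones use the hypothesis on $U$ to exhibit a point of $E\cap\widetilde U$. You also correctly flag the genuinely hard step, namely passing from ``$U$ meets the image of $\fix(M)$'' to ``$\widetilde U$ meets the particular divisor $E$'', and you rightly locate it in the appendix of \cite{hamu1982}.

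Two technical points would need tightening in a self-contained write-up. First, your assertion that the valuation attached to $E$ ``singles out an element $M\in H$ together with a primitive root $\zeta$'' and the displayed formula for $n_E$ are exactly the content of the Reid--Tai computation and deserve at least a reference to where this is carried out (it is straightforward for cyclic $H$ via toric geometry, and the reduction to that case is part of the work). Second, the claim that one may move $p$ to a point of $\fix(M)$ with stabiliser \emph{exactly} $\langle M\rangle$ is not true as stated: the generic stabiliser along $\fix(M)$ is the pointwise stabiliser of the linear subspace $\fix(M)$, which contains $\langle M\rangle$ but can be strictly larger. What one actually needs is that the local picture over the generic point of the image of $\fix(M)$ is controlled by this pointwise stabiliser, and that the divisor $E$ (or a divisor computing the same discrepancy) already appears there; this is precisely the content of the Harris--Mumford appendix argument you invoke.
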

In order to prove Theorem~\ref{thm:lift} we have to further analyse the geometry of $\sgbar$ near the non-canonical singularities. In the case of $\mgbar$ Harris and Mumford constructed an open neighbourhood $S$ of the moduli point $[\stab]$ of a general curve $\stab$ with a given elliptic tail $\stab_1$. The curve $\stab$ has two irreducible components, the elliptic tail $\stab_1$ and a smooth curve $\stab_2$ of genus $g-1$ without automorphisms meeting at one node $P$. Denote by $P^+\in\stab_1^\nu$ and $P^-\in\stab_2^\nu$ the preimages of the node $P$ under the normalisation. Consider the map
\begin{align*}
\psi:\PP[1]=\mgbar[1,1]&\longrightarrow\mgbar\\
[(C_1',{P'}^+)]&\longmapsto[\stab']
\end{align*}
where $\stab'$ is the stable curve with irreducible components $\stab_1'$ and $\stab_2$ glued at ${P'}^+$ and $P^-$. Then $\psi$ is an isomorphism onto its image and the open neighbourhood $S=S(\stab_2,P^-)$ of $[\stab]$ has the following properties (see pp.~40~ff. in~\cite{hamu1982}):
\begin{enumerate}
\item $S$ contains the image $\im\psi$.
\item There exists a birational morphism $S\rightarrow B$ where $B$ is smooth and $3g-3$-dimensional.
\item There exists a subvariety $Z\subset B$ of codimension two such that the preimage of $B\setminus Z$ under the map $S\rightarrow B$ is isomorphic to $B\setminus Z$.
\item A point in $B\setminus Z\subset S\subset\mgbar$ corresponds to an irreducible stable curve with trivial automorphism group, i.e., $B\setminus Z\subset\mgbar^0$, where $\mgbar^0$ is the locus of curves with trivial automorphism group.
\[\begindc{0}[1]
\obj(0,0)[lu]{$B$}[1] 
\obj(70,0)[mu]{$B\setminus Z$}[1] 
\obj(140,0)[ru]{$\mgbar^0$}[1] 
\obj(70,40)[mm]{$S$}[1] 
\obj(140,40)[rm]{$\mgbar$}[1] 
\obj(70,80)[mo]{$\im\psi$}[1] 
\obj(140,80)[ro]{$\Delta_1$}[1] 
\mor{mu}{lu}[10,10]{$$}[1,3] 
\mor{mu}{ru}[10,10]{$$}[1,3] 
\mor{mm}{lu}[10,10]{$$}[1,0] 
\mor{mm}{rm}[10,10]{$$}[1,3] 
\mor{mu}{mm}[10,10]{$$}[1,3] 
\mor{ru}{rm}[10,10]{$$}[1,3] 
\mor{mo}{ro}[10,10]{$$}[1,3] 
\mor{mo}{mm}[10,10]{$$}[1,3] 
\mor{ro}{rm}[10,10]{$$}[1,3] 
\enddc\]
\end{enumerate}
The importance of this construction is the following. If $\omega$ is a pluricanonical form on the regular locus of $S$ and $\widetilde S\rightarrow S$ a desingularisation, then $\omega$ restricts to a form on $B\setminus Z$, extends over the codimension two locus $Z$ to $B$ since $B$ is smooth and the pullback of this extension via $\widetilde S\rightarrow S\rightarrow B$ is holomorphic. 
\begin{proof}[Proof of Theorem~\ref{thm:lift}]
Let $\omega$ be a pluricanonical form on $\sgbar^{\text{reg}}$. It is enough to prove that $\omega$ lifts to a desingularisation of an open neighbourhood of every point $[\spin]\in\sgbar$.

\emph{First case: $[\spin]$ is a canonical singularity.} By definition every pluricanonical form on a small neighbourhood of a canonical singularity lifts to a desingularisation.

\emph{Second case: $[\spin]$ is a general non-canonical singularity.} It follows from Theorem~\ref{thm:cansing} that the stable model $\stab$ of $\spini$ has two irreducible components $\stab_1$ and $\stab_2$ meeting in one node $P$, where $\stab_1$ is a smooth elliptic curve with $j$-invariant $0$ and $\stab_2$ is a smooth curve of genus $g-1$ with trivial automorphism group. Denote the preimages of $P$ under the normalisation by $P^+\in\stab_1^\nu$ and $P^-\in\stab_2^\nu$. Moreover, $\spinii_i^\nu=\nu_\spini^*\spinii_{|\stab_i^\nu}$, $i=1,2$, is a theta characteristic and $\spinii_1^\nu$ is trivial. The map $\psi:\mgbar[1,1]\rightarrow\mgbar$ can be lifted to $\sgbar$ as follows:
\begin{align*}
\widehat\psi:\mgbar[1,1]&\longrightarrow\sgbar\\
[(\stab_1',{P'}^+)]&\longmapsto[(\spini',\spinii',\spiniii')]
\end{align*}
where the isomorphism class $[(\spini',\spinii',\spiniii')]$ is uniquely determined by requiring $\spini'\rightarrow\stab'$ to be the blow up at the node $P'$ of the stable curve $\stab'$ with irreducible components $\stab_1'$ and $\stab_2$ glued by ${P'}^+=P^-$, ${\spinii'_1}^\nu=\calO_{{\stab'_1}^\nu}$ and ${\spinii'_2}^\nu=\spinii_2^\nu$. 

Since $\aut\stab_2$ is trivial and ${\spinii'_1}^\nu=\calO_{{\stab'_1}^\nu}$ every automorphism of $\stab'$ lifts to $(\spini',\spinii',\spiniii')$ and 
\[
0\longrightarrow\auto(\spini',\spinii',\spiniii')\longrightarrow\aut(\spini',\spinii',\spiniii')\longrightarrow\aut\stab'\longrightarrow 0
\]
is exact. The action of $\aut\stab'$ on the deformation space $\CC[3g-3]_t$ is
\[
\begin{cases}
\left\langle\left(\begin{smallmatrix}
-1&\\
&\1
\end{smallmatrix}\right)\right\rangle&\text{ if }\aut\stab'\cong\ZZ_2\\[1ex]
\left\langle\left(\begin{smallmatrix}
\zeta_4&&\\
&-1&\\
&&\1
\end{smallmatrix}\right)\right\rangle&\text{ if }\aut\stab'\cong\ZZ_4\\[1ex]
\left\langle\left(\begin{smallmatrix}
\zeta_6&&\\
&\zeta_6^2&\\
&&\1
\end{smallmatrix}\right)\right\rangle&\text{ if }\aut\stab'\cong\ZZ_6
\end{cases}
\]
Again we may devide out by the subgroup generated by the quasireflection $\left(\begin{smallmatrix}
-1&\\
&\1
\end{smallmatrix}\right)$ and get $\CC[3g-3]_t/\aut\stab'\cong\CC[3g-3]_{u'}/\paut\,\stab'$, where $u_1'=t_1^2$, $u_i'=t_i$, $i=2,\dotsc,3g-3$ and
\[
\paut\,\stab'=\begin{cases}
\{\1\}&\text{ if }\aut\stab'\cong\ZZ_2\\[1ex]
\left\langle\left(\begin{smallmatrix}
-1&&\\
&-1&\\
&&\1
\end{smallmatrix}\right)\right\rangle&\text{ if }\aut\stab'\cong\ZZ_4\\[1ex]
\left\langle\left(\begin{smallmatrix}
\zeta_3&&\\
&\zeta_3&\\
&&\1
\end{smallmatrix}\right)\right\rangle&\text{ if }\aut\stab'\cong\ZZ_6
\end{cases}
\]
Moreover, $P'$ is the only node of $\stab'$ and it is an elliptic tail node, hence $u_1=\tau_1^4=t_1^2$, $u_i=\tau_i=t_i$, $i=2,\dotsc,3g-3$ and
\[
\paut(\spini',\spinii',\spiniii')=\begin{cases}
\{\1\}&\text{ if }\aut\stab'\cong\ZZ_2\\[1ex]
\left\langle\left(\begin{smallmatrix}
-1&&\\
&-1&\\
&&\1
\end{smallmatrix}\right)\right\rangle&\text{ if }\aut\stab'\cong\ZZ_4\\[1ex]
\left\langle\left(\begin{smallmatrix}
\zeta_3&&\\
&\zeta_3&\\
&&\1
\end{smallmatrix}\right)\right\rangle&\text{ if }\aut\stab'\cong\ZZ_6
\end{cases}
\]
Therefore, locally at the point $[(\spini',\spinii',\spiniii')]\in\im\widehat\psi$ the forgetful morphism $\pi:\sgbar\rightarrow\mgbar$ is the isomorphism $\CC[3g-3]_u/\paut(\spini',\spinii',\spiniii')\cong\CC[3g-3]_{u'}/\paut\,\stab'$. This implies that the open neighbourhood $S=S(\stab_2,P^-)$ of $\im\psi\subset\mgbar$ lifts to an open neighbourhood $S(\stab_2,\spinii_2^\nu,P^-)\xrightarrow[\pi]{\ \cong\ } S$ of $\im\widehat\psi$ (after shrinking $S$ if necessary). A point in $B\setminus Z\subset S\subset\sgbar$ corresponds to a spin curve whose stable model is irreducible and has trivial automorphism group, hence the spin curve has automorphism group $\{(\identity,\pm\identity)\}$ and $B\setminus Z\subset \sgbar^{\text{reg}}$ giving the following diagram
\[\begindc{0}[1]
\obj(0,0)[lu]{$B$}[1] 
\obj(70,0)[mu]{$B\setminus Z$}[1] 
\obj(140,0)[ru]{$\sgbar^{\text{reg}}$}[1] 
\obj(70,40)[mm]{$S$}[1] 
\obj(140,40)[rm]{$\sgbar$}[1] 
\mor{mu}{lu}[10,10]{$$}[1,3] 
\mor{mu}{ru}[10,10]{$$}[1,3] 
\mor{mm}{lu}[10,10]{$$}[1,0] 
\mor{mm}{rm}[10,10]{$$}[1,3] 
\mor{mu}{mm}[10,10]{$$}[1,3] 
\mor{ru}{rm}[10,10]{$$}[1,3] 
\enddc\]
The form $\omega$ on $\sgbar^{\text{reg}}$ restricts to $B\setminus Z$, extends holomorphically over the codimension two locus $Z$ to all of $B$ since $B$ is smooth, and this extension lifts holomorphically to a desingularisation $\widetilde S\rightarrow S$ via the concatenation $\widetilde S\rightarrow S\rightarrow B$.

\emph{Third case: $[\spin]\in\sgbar$ is any non-canonical singularity.} Denote by $\stab_1^{(1)},\dotsc,\stab_1^{(k)}$ all elliptic tails with $j$-invariant $0$ and $\spinii_1^{(i),\nu}=\nu^*\spinii_{|\stab_1^{(i),\nu}}$ trivial. Let $\stab_2^{(i)}=\overline{\stab\setminus\stab_1^{(i)}}$, $P^{(i),+}\in\stab_1^{(i),\nu}$ and $P^{(i),-}\in\stab_2^{(i),\nu}$ the preimages of the node $P^{(i)}$ connecting $\stab_1^{(i)}$ and $\stab_2^{(i)}$ under the normalisation. For every $i$ there is a deformation of $\spin$ to a spin curve $({\spini'}^{(i)},{\spinii'}^{(i)},{\spiniii'}^{(i)})$ that preserves the elliptic tail $\stab_1^{(i)}$ and the trivial theta characteristic on it and is general with this property. In particular, the stable model ${\stab'}^{(i)}$ of the deformed spin curve has two irreducible components, the elliptic tail $\stab_1^{(i)}$ and a smooth curve ${\stab'_2}^{(i)}$ with trivial automorphism group meeting in a node ${P'}^{(i)}$. For every $i$ there exists a neighbourhood $S^{(i)}=S({\stab_2'}^{(i)},{\spinii'_2}^{(i),\nu},{P'}^{(i),-})$ as above. It is possible that $S^{(i)}=S^{(i')}$ for different indices $i$ and $i'$, but after dropping such dublicates and possibly shrinking the remaining $S^{(i)}$ we may assume that these are disjoint. 

Let $V=\CC[3g-3]_u$, $V^0$ the locus where $G=\paut\spin$ acts freely, $Y=V/G\cup\bigcup_i S^{(i)}$, $\widetilde Y\rightarrow Y$ a desingularisation, $U=V^0/G\cup\left(V/G\cap\bigcup_iS^{(i)}\right)$ and $\widetilde U$ the preimage of $U$ under the desingularisation $\widetilde Y\rightarrow Y$. 

\emph{Claim: $U$ fulfils the assumptions of the generalised \rsbt{} criterion.} $U$ is open in $V/G$ and contains $V^0/G$ by definition. Now let $M\in G$ be an element with \rsbt{} sum $0<\RT<1$ with respect to some primitive $\ord M$th root. By Corollary~\ref{cor:bad} $M$ is the action of an automorphism $\iso$ such that $\stab$ has exactly one elliptic tail on which $\isoi_\stab$ acts of order $3$ or $6$, on all other irreducible components $\isoi_\stab$ is the identity and the theta characteristic on the elliptic tail is trivial. Hence, the elliptic tail of order $3$ or $6$ has to be $\stab_1^{(i)}$ for some $i\in\{1,\dotsc,k\}$. If $u_{2i-1}$ is the coordinate corresponding to the elliptic tail node $P^{(i)}$ and $u_{2i}$ is the coordinate corresponding to deforming the elliptic tail $\stab_1^{(i)}$, then $\fix(M)=\{u_{2i-1}=u_{2i}=0\}$. In the deformed spin curve $({\spini'}^{(i)},{\spinii'}^{(i)},{\spiniii'}^{(i)})$ the elliptic tail $\stab_1^{(i)}$, its elliptic tail node and the theta characteristic on it are preserved, therefore, the moduli point of this spin curve in $\CC[3g-3]_\tau/\aut\spin$ lies in the image of the locus $\{\tau_{2i-1}=\tau_{2i}=0\}\subset\CC[3g-3]_\tau$ and under the identification $\CC[3g-3]_\tau/\aut\spin\cong V/G$ this point lies in the image of $\fix(M)$ in $V/G$. By definition the point also lies in $S^{(i)}$, hence in $U$. In particular, the intersection of $U$ with the image of $\fix(M)$ is non-empty.

As in the second case the restriction of the form $\omega$ to $S^{(i)}\cap\sgbar^{\text{reg}}$ extends holomorphically to the preimage $\widetilde{S^{(i)}}$ under the desingularisation $\widetilde Y\rightarrow Y$. Moreover, $\omega$ is holomorphic on $V^0/G\subset\sgbar^{\text{reg}}$. Therefore, $\omega$ is holomorphic on $\widetilde U$, the assumptions of the generalised \rsbt{} criterion are fulfilled and $\omega$ extends holomorphically to the desingularisation $\widetilde{V/G}\subset\widetilde Y$ of $V/G$.
\end{proof}

{\footnotesize Katharina Ludwig,\\
Institut f\"ur Algebraische Geometrie,\\
Leibniz Universit\"at Hannover\\
Welfengarten 1, D-30167 Hannover, Germany,\\
Email: ludwig (at) math.uni-hannover.de}
\end{document}